\documentclass[oneside,a4paper]{amsart}

\usepackage[T1]{fontenc}
\usepackage[english]{babel}

\usepackage{amsmath,amssymb,amscd,mathrsfs,epic,empheq}

\usepackage[colorlinks=true,hyperindex, linkcolor=magenta, pagebackref=false, citecolor=cyan,pdfpagelabels]{hyperref}
\usepackage[all]{xy}
\usepackage{mathabx}
\usepackage{placeins} 
\usepackage[usenames, dvipsnames]{xcolor} 
\usepackage{euscript} 
\usepackage{bbm} 
\usepackage{enumitem} 
\usepackage[nameinlink,capitalise,noabbrev]{cleveref} 
\usepackage{todonotes}

\hypersetup{
    colorlinks=true,
    linkcolor=PineGreen,
    filecolor=magenta,      
    citecolor=cyan,
}

\usepackage{euscript}
\usepackage{graphicx}
\usepackage{stmaryrd}

\usepackage[margin=1in]{geometry}

\usepackage{tikz}
\usetikzlibrary{arrows,backgrounds,
    decorations.pathreplacing,
    decorations.pathmorphing
}
\usetikzlibrary{matrix,arrows}

\raggedbottom
\numberwithin{equation}{subsection}

\setcounter{tocdepth}{3}

\newtheorem{Proposition}{Proposition}[section]
\newtheorem{Lemma}[Proposition]{Lemma}
\newtheorem{Theorem}[Proposition]{Theorem}
\newtheorem{Claim}[Proposition]{Claim}
\newtheorem{Corollary}[Proposition]{Corollary}

\theoremstyle{definition}
\newtheorem{Definition}[Proposition]{Definition}

\newtheorem{Construction}[Proposition]{Construction}
\newtheorem{Remark}[Proposition]{Remark}

\newtheorem{Notation}[Proposition]{Notation}
\newtheorem{Example}[Proposition]{Example}

\newcommand{\introthmname}{}
\newtheorem{introthminn}{\introthmname}
\newenvironment{introthm}[1]
  {\renewcommand{\introthmname}{#1}\begin{introthminn}}
  {\end{introthminn}}

\newcommand{\euXym}{\ensuremath{\euXymatrix@1}}

\newcommand{\euP}{\ensuremath{\EuScript{P}}}

\newcommand{\euY}{\ensuremath{\EuScript{Y}}}
\newcommand{\euX}{\ensuremath{\EuScript{X}}}

\newcommand{\euV}{\ensuremath{\EuScript{V}}}
\newcommand{\euU}{\ensuremath{\EuScript{U}}}
\newcommand{\euS}{\ensuremath{\EuScript{S}}}
\newcommand{\euT}{\ensuremath{\EuScript{T}}}
\newcommand{\euZ}{\ensuremath{\EuScript{Z}}}
\newcommand{\euW}{\ensuremath{\EuScript{W}}}

\newcommand{\eupsilon}{\ensuremath{\EuScript{E}}}

\newcommand{\CM}{\ensuremath{\textbf{Chow}_\infty}}


\def\hom{{\operatorname{Hom}}}

\def\ext{\operatorname{ext}}

\def\id{\operatorname{id}}

\def\op{\operatorname{op}}

\def\Mod{\operatorname{Mod}}

\def\dim{\operatorname{dim}}

\def\spec{\operatorname{Spec}}

\def\Fun{\operatorname{Fun}}

\def\map{\operatorname{map}}

\def\calg{\operatorname{CAlg}}

\def\h{\operatorname{\text{h}}}

\def\chow{\operatorname{CH}}

\def\LM{\operatorname{CHM}}
\def\Corr{\operatorname{Corr}}
\def\DM{\operatorname{DM}}
\def\SH{\operatorname{SH}}

\def\PrL{\operatorname{Pr^L}}
\def\PrR{\operatorname{Pr^R}}
\def\stb{\operatorname{stb}}
\def\gm{\operatorname{gm}}
\def\nislocst{\operatorname{Nis-locSt}}
\def\Ind{\operatorname{Ind}}

\def\Map{\operatorname{Map}}

\newcommand{\triplerightarrow}{%
\tikz[minimum height=0ex]
  \path[->]
   node (a)            {}
   node (b) at (1em,0) {}
  (a.north)  edge (b.north)
  (a.center) edge (b.center)
  (a.south)  edge (b.south);%
}

\makeatletter
  \def\subsection{\@startsection{subsection}{1}%
  \z@{.7\linespacing\@plus\linespacing}{.5\linespacing}%
  {\normalfont\bfseries\centering}}
\makeatother

\setcounter{tocdepth}{3}
\let\oldtocsection=\tocsection
\let\oldtocsubsection=\tocsubsection
\let\oldtocsubsubsection=\tocsubsubsection
\renewcommand{\tocsection}[2]{\hspace{0em}\oldtocsection{#1}{#2}}
\renewcommand{\tocsubsection}[2]{\hspace{1em}\oldtocsubsection{#1}{#2}}
\renewcommand{\tocsubsubsection}[2]{\hspace{2em}\oldtocsubsubsection{#1}{#2}}


\setlength{\textwidth}{\paperwidth}
\addtolength{\textwidth}{-2.5in}
\calclayout

\author[Dhyan Aranha, Chirantan Chowdhury]{Dhyan Aranha, Chirantan Chowdhury}
\email{dhyan.aranha@gmail.com,chirantanc474@gmail.com }

\begin{document}
\title[The Chow Weight structure for geometric motives of quotient stacks]{The Chow weight structure for geometric motives of quotient stacks }
\begin{abstract}
    We construct the Chow weight structure on the derived category of geometric motives $ \DM_{\gm} ([X/G], \Lambda)$ for $X$ a quasi-projective scheme over a field characteristic $0$, $G$ an affine algebraic group and $\Lambda$ an arbitrary commutative ring. In particular we also show that the heart of this weight structure recovers the category of Chow motives on $[X/G]$. 
\end{abstract}

\maketitle 
\tableofcontents

\section{Introduction}

The notion of weight structure on a triangulated category was introduced by Bondarko in \cite{bondarko_2010} and independently by Pauksztello \cite{Pauksztello} (under the name of "co-t-structures"). In \cite{hébert_2011} and \cite{Bondarkob} Chow weight structures for the derived category of Beilinson motives where constructed and in \cite{BondarkoIvanov} Chow weight structures for $\DM_{\text{cdh}} (- ,\Lambda)$ were constructed where $\Lambda$ is a general ring such that the characteristic of the base is invertible. 

The motivation for this note comes from the works of \cite{SVW}{Rem. II.4.15] and \cite{EberhardStroppel}[Rem. 4.8], where it is asked if there is a general way to put a Chow weight structure on derived category of (geometric) motives for quotient stacks. In this article we propose a definition for $\DM_{\gm} ([X/G] ,\Lambda)$, the derived category of geometric motives over a stack $[X/G]$ where $X$ is assumed to be quasi-projective (\cref{Definition:gemoetric motives}). Roughly speaking, $\DM_{\gm}([X/G], \Lambda)$ is the thick subcategory of $\DM([X/G], \Lambda)$ generated by (Tate twists of) motives of stacks which are smooth and quasi-projective over $[X/G]$. Our justification for this definition is that it is equivalent to the usual definition \cite{CiDeg}[Def. 2.3] when $G$ is trivial (\cref{{Lemma:quasi-projective geometric motives are the same over quasi-projective schemes}}). Our first main theorem is

\begin{introthm}{Theorem}
\label{Theorem:Introduction_existence_of_weight_str}
Suppose that $\euX = [X/G]$ where $X$ is a quasi-projective scheme over a field $k$ of characteristic $0$ and $G$ is an affine algebraic group acting on $X$.  Let $\Lambda$ be any commutative ring.  Then the $\infty$-category of geometric motives $\DM_{\gm} (\euX, \Lambda)$ admits a Chow weight structure $w_\text{Chow}$. 
\end{introthm}

The reason we call the weight structure constructed in Theorem \ref{Theorem:Introduction_existence_of_weight_str}, the \emph{Chow} weight structure is because of our second main theorem

\begin{introthm}{Theorem}
\label{Theorem:Introduction_weight_heart_is_chow_motives}
Suppose we are in the setup of Theorem \ref{Theorem:Introduction_existence_of_weight_str}. Then there is an equivalence
\begin{center}
    $ \h \DM_{\gm} (\euX, \Lambda)^{\heartsuit_{w}} \simeq \LM(\euX, \Lambda),$
\end{center}
where $\LM(\euX, \Lambda)$ denotes the category of classical Chow motives over $\euX$. (see Definition \ref{Definition:chow_motives}). 
\end{introthm}

Theorem \ref{Theorem:Introduction_existence_of_weight_str} appears as \cref{Theorem:Weight_structure_for_global_quotients} in the main text. The assumption that the field $k$ is of characteristic zero in Theorem \cref{Theorem:Introduction_existence_of_weight_str} is needed in two places in this work: Firstly, in order to prove the existence of proper $*$-pushforwards for $\DM_{\gm} (- , \Lambda)$ we rely on the results of \cite{AHLHR}. Secondly, because we need to use $G$-equivariant resolution of singularities. In particular to get this result in characteristic $p$, different arguments are needed and we think this is an interesting question. 

In fact in both \cite{SVW} and \cite{EberhardStroppel} weight structures were constructed on various subcategories of  $\DM([X/G], \Lambda)$ under various assumptions on the action of $G$ on $X$. We believe that with an appropriate version of Chow's lemma for stacks one can show that our category $\DM_{\gm} ([X/G], \Lambda)$ is equivalent to the category $\DM^{\text{Spr}}_G (X, \Lambda)$ of \cite{EberhardStroppel} when the base field $k$ is of characteristic $0$.

We now give a general outline of the article. In \cref{Section:DM for algebraic stacks} we introduce the $\infty$-category $\DM(\euX, \Lambda)$ and its six-functor formalism for a general class of stacks (so called Nis-loc stacks \cite{chowdhury2021motivic}). We also explain the equivalence 
\begin{center}
    $\DM(\euX, \Lambda) \simeq \Mod_{H\Lambda} (\SH(\euX))$
\end{center}
when $\euX$ is Nis-loc, which is presumably well known to experts, but we could not find a reference for this in the literature. 

In \cref{Section:Descent Results} we record various descent results which will be used to construct the Chow weight structure. Most important, will be the fact that $\DM (- ,\Lambda)$ on the category of Nis-loc stacks has $cdh$-descent (see \cref{Propostion:cdh_descent}). This will be used together with the existence of equivariant resolutions of singularities in characteristic $0$ to show that the $\infty$-category of Chow motives (see \cref{Definition:Infty category of chow motives}) generates the derived category of geometric motives for a quotient stack in a suitable sense.

In \cref{Section:Geometric motives} we introduce the category of geometric motives, $\DM_{\gm}(\euX, \Lambda)$, and consider how various operations in the six functor formalism on  $\DM(\euX , \Lambda)$ restrict to $\DM_{\gm} (\euX, \Lambda)$. One of the main results in this section is that $\DM_{\gm} (\euX, \Lambda)$ is stable under projective $*$-pushforwards. This result relies crucially on the results of \cite{AHLHR}.  The other important result in this section is that we show that the $\infty$-category of Chow motives $\CM(\euX, \Lambda)$ generates $\DM_{\gm} (\euX, \Lambda)$: \cref{Theorem:Generation_of_DM_gm_by_chow}. 

In \cref{Section:Mapping spectra and chow motives} we explain the connectivity of the mapping spectrum between any two objects of the $\infty$-category of Chow motives of a quotient stack. Along the way we will also explain the equivalence
\begin{center}
    $\pi_0 \map_{\DM(\euX, \Lambda)} (1_\euX (s)[2s+t], f^!1_B) \simeq \chow_s(\euX, t)_\Lambda.$
\end{center}
for a quotient stack $\euX = [X/G]$ which is well known in the case that $X$ is smooth \cite{RicharzScholbach}[Thm. 2.2.10] and \cite{KaRa}[12.4]. 

In \cref{Section:Weight Structure} we remind the reader about the definition of weight structures and prove the existence of the Chow weight structure for quotient stacks: \cref{Theorem:Weight_structure_for_global_quotients}. 

Finally in \cref{Section:Equivariant Motives} we explain the identification of the homotopy category of the weight heart of the Chow weight structure on $\DM_{\gm}([X/G], \Lambda)$ with the classically defined category of Chow motives. As expected in \cite{SVW}[II.4.15], when $\euX = BG$ for $G$ a linear algebraic group over a field $k$ and $ \Lambda = \mathbf{Q}$, Theorem \cref{Theorem:Introduction_weight_heart_is_chow_motives} gives an identification of the weight heart of our weight structure with Laterveer's category category of equivariant motives \cite{Laterveer} (See \cref{Corollary:Comparison with Laterveer's category}).

\subsection{Future work}

In forthcoming work with Alessandro D'Angelo, for a Nis-loc stack $\euX$ we will introduce an auxiliary category
\begin{equation*}
 \DM_{\gm, \ext} (\euX, \Lambda),
\end{equation*}
which is the Nis-loc extension (in the sense of \cite{chowdhury2021motivic}[Thm. 2.4.1]) of $\DM_{\gm} (-, \Lambda)$. This category inherits many nice properties such as $(-)_*$ functors for \emph{all} representable morphisms. We will use $\DM_{\gm, \ext} (\euX, \Lambda)$ to remove the quasi-projectivity hypothesis in \cref{Definition:gemoetric motives} and it will allow us to show that $\DM_{\gm} (\euX, \Lambda)$ is preserved under pushforwards for arbitrary representable morphisms. In particular, we will be able to use Bondarko's "gluing of weight structures" \cite{bondarko_2010}[8.2] to construct a canonical weight structure on $\DM_{\gm}(\euX, \Lambda)$ for for an arbitrary Nis-loc stack with affine stabilizers. 

Recently the notion of Perverse motive has been generalized to algebraic stacks in the work of Tubach \cite{tubach2023nori}. We hope that notion of weight structures for stacks will be useful in this context and it is something we are presently thinking about. 

Moreever, we also expect our work to have connections to the theory of \emph{Derived motivic measures}, as exposed in \cite{nanavaty2024weight}. That is to say we expect that there is a natural extension of the theory in loc. cit. to algebraic stacks and that the Chow weight structure we construct here should play a role. 

\subsection{Acknowledgements}
We would first like to take the opportunity to thank Alessandro D'Angelo for many conversations about the material in this note and for pointing out an important mistake in an earlier version of this work. We would like to thank Vova Sosnilo and Swann Tubach for very helpful email correspondences.  We  would also like to thank Marc Levine for patiently answering several asinine questions about motivic homotopy theory and Jochen Heinloth for many conversations about stacks\footnote{The authors were supported by the ERC through the project QUADAG.  This paper is part of a project that has received funding from the European Research Council (ERC) under the European Union's Horizon 2020 research and innovation programme (grant agreement No. 832833).  \\
\includegraphics[scale=0.08]{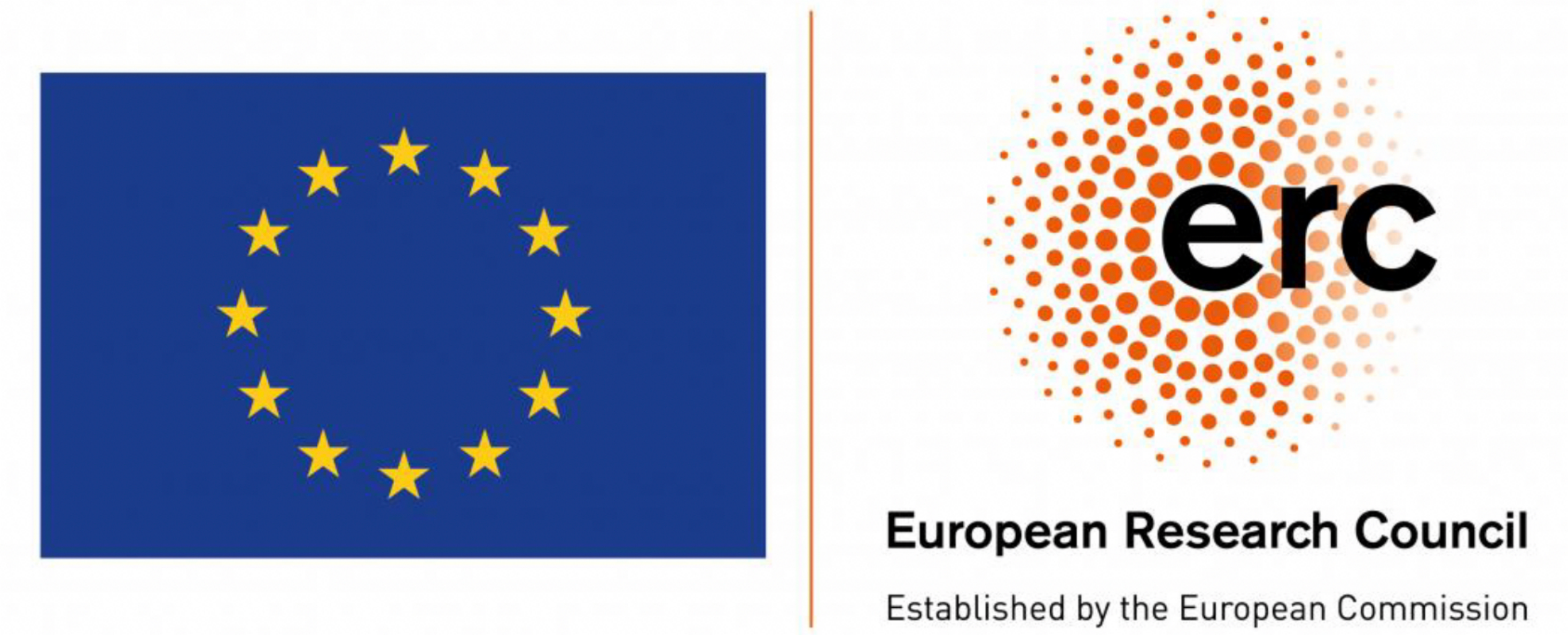}}.

\subsection{Notation} 

We will denote stacks by the letters $\EuScript{X}, \EuScript{Y}, \EuScript{Z}..$ etc. and denote schemes/algebraic spaces by the letters $X, Y Z..$ etc. All of our geometric objects will live over a base scheme $ B = \spec(k)$ where $k$ is an algebraically closed field. We will denote by $\Lambda$ a arbitrary commutative ring, such that if $\text{char}(k) = p > 0 $ we assume that $p$ is invertible in $\Lambda$. 

We will assume all of our stacks have affine diagonal and are of finite type over $B$. In particular by \cite{deshmukh2023motivic}[Thm. 1.2] all our stacks are Nis-loc (see \cref{Definition:Nis-loc_stack}). We will often still refer to the Nis-loc stack hypothesis anyway in many statements to reassure the reader.

\section{DM for algebraic stacks}
\label{Section:DM for algebraic stacks}

We will begin by recalling the construction of the category $\DM(\euX, \Lambda)$ and it's six functor formalism. Given a finite type $B$-scheme $S$, and a commutative ring $\Lambda$, it follows from the work of \cite{Spitzweck} and \cite{Hoyois} that there is a well defined motivic Eilenberg-MacLane spectrum $H\Lambda_S$. We make the following definition of the derived category of motives for finite type schemes over $B$.

\begin{Definition}
Let $X$ be a finite type scheme over $B$, and $\Lambda$ an arbitrary commutative ring. We define the derived category of motives with coefficients in $\Lambda$ to be
\begin{equation*}
\DM(X, \Lambda) := \Mod_{H\Lambda_X} (\SH(X)).
\end{equation*}
\end{Definition}
The category $\DM(X, \Lambda)$ is equivalent to the category $\DM_{\text{cdh}} (X, \Lambda)$ by \cite{integralmixedmotives}[Thm. 5.1]. In particular it has a six functor formalism which we will recall shortly. 

\begin{Remark}
\label{Remark:Connection_to_Beilinson_motives}
    In the case that $\Lambda = \mathbb{Q}$ it follows from \cite{integralmixedmotives} that $\DM (X, \Lambda)$ is equivalent to the category of Beilinson motives.
\end{Remark}

We now summarize the 6-functor formalism on schemes for $\DM(- , \Lambda)$ which follows from \cite{Spitzweck} and \cite{integralmixedmotives}. There is a functor
\begin{equation*}
    \DM^* : (\text{Sch}^\text{ft}_B) ^{\op} \rightarrow \calg(\PrL_{\stb, \Lambda})
\end{equation*}
from the category of finite type $B$-schemes to the $\infty$-category of presentable stable $\Lambda$-linear symmetric monoidal $\infty$-categories. Which sends $ X $ to the $\infty$-category $\DM(X, \Lambda)$, and $ f: X \rightarrow Y$ to $f^*$. This defines a motivic category in the sense of \cite{CiDeg} which has a six functor formalism:
\begin{enumerate}
\item For every $f: X\rightarrow Y$ the monoidal functor $f^*$ admits a right adjoint $f_*$.

\item For every smooth $f : X \rightarrow Y$ smooth, the functor $f^*$ admits a left adjoint $ f_\#$

\item For every separated morphism $f:X \rightarrow Y$ there is a functor $ f^! : \DM(Y, \Lambda) \rightarrow \DM(X, \Lambda)$ which admits a left adjoint $f_!$. 

\item There are adjoint functors $(\otimes, \underline{\map})$. 

\item For $f: X \rightarrow Y$, there exists a natural transformation
\begin{center}
   $ \alpha_f : f_! \rightarrow  f_*$
\end{center}
which is an equivalence when $\alpha$ is proper. 

\item (Purity) For any smooth separated morphism $ f: X \rightarrow Y$ of relative dimension $d$, there is a canonical natural isomorphism 
\begin{center}
$ \mathfrak{p}_f: f^* \overset{\simeq}{\rightarrow} f^! (-d)[-2d]$.
\end{center}

\item (Base Change) If 
  \begin{center}
	\begin{tikzpicture}
		\node (TL) at (0 , 1.2) {$X'$};
		\node (TR) at (1.2, 1.2) {$Y'$};
		\node (BL) at (0, 0){$X$};
        \node (BR) at (1.2,0){$Y$};
		
		\draw[->] (TL) -- (TR) node [midway, above] {$f'$};
		\draw[->] (TL) -- (BL)node [midway, left] {$g'$};
		\draw[->] (TR) -- (BR)node [midway, right] {$g$};
        \draw[->] (BL) -- (BR) node [midway, below] {$f$};
	\end{tikzpicture}
 \end{center}
is a cartesian square and $g$ is separated, then we have the following canonical equivalences:
\begin{eqnarray*}
     g'_! f'^* & \simeq &  f^* g_! \\
    f'_*g'^! & \simeq & g^! f_*. 
\end{eqnarray*}

\item (Projection formula) For any separated finite type morphism $ f: Y \rightarrow X$  we have the following equivalences 
\begin{eqnarray*}
    f_! (\mathcal{F} \otimes f^* (\mathcal{E})) & \simeq & f_! (\mathcal{F}) \otimes \mathcal{E} \\
    f^! \underline{\map}_{\DM(X, R)}  (\mathcal{E}, \mathcal{E}') & \simeq & \underline{\map}_{\DM(Y, R)}(f^*\mathcal{E}, f^!\mathcal{E}').
\end{eqnarray*}

\item (Localization) For $ i: Z \hookrightarrow X$ a closed immersion with open complement $ j: U \hookrightarrow X$ we have the following cofiber sequences
\begin{eqnarray*}
    i_! i^! \rightarrow  \id  \rightarrow   j_* j^* \\
    j_! j^! \rightarrow \id \rightarrow i_* i^*. 
\end{eqnarray*}

\item (Absolute purity) For any closed immersion $i: Z \hookrightarrow X$ between regular schemes of codimension $c$ there is an isomorphism
\begin{center}
$ i^! 1_X \simeq 1_Z (-c)[-2c]$
\end{center}
\end{enumerate}

\begin{Notation}
In order to avoid overloading notation we will simply write $\DM(-, \Lambda)$ for $\DM^*(-, \Lambda)$. 
\end{Notation}

Our goal now is to describe an extension of the functor $\DM(-, \Lambda)$ to a certain class of stacks introduced by Chowdhury \cite{chowdhury2021motivic} called Nis-loc stacks. We will first recall the definition.

\begin{Definition}
\label{Definition:Nis-loc_stack}
We say that an algebraic stack $\euX$ admits Nisnevich-local sections if there exists a morphism $ x : X \rightarrow \euX$ such that $X$ is a scheme and for any morphism $ y : Y \rightarrow \euX$ with $ Y$ a scheme, the induced map $ x': X \times_\euX Y \rightarrow Y$ admits Nisnevich-Local sections.  We say that an algebraic stack $\euX$ is \emph{Nis-loc} if there exists a smooth cover which admits Nisnevich-local sections. We will denote the $\infty$-category of Nis-loc stacks by $\nislocst$
\end{Definition}

\begin{Example}
The following example is from \cite{chowdhury2021motivic}[Cor. 2.3.6]. Let $X$ be a finite type scheme over $B$ and $G$ an affine algebriac group. Then $[X/G]$ is a Nis-loc stack.
\end{Example}

\begin{Example}
By \cite{deshmukh2023motivic}[Thm. 1.2]  any quasi-separated, finite type algebraic stack over $B$ with separated diagonal is Nis-loc.
\end{Example}

One can construct an extension of $\DM(-, \Lambda)$ to all locally finite type algebraic stacks over $B$ by considering the so called \textit{lisse-extension} as introduced in \cite{KaRa}[Constr. 12.1]. However, the \v{C}ech nerve of an arbitrary smooth cover will not be confinal in general and so we cannot compute this extension along arbitrary smooth covers. The reason for introducing the notion of Nis-loc stack is that they provide a convenient class of stacks where we can compute $\DM (- ,\Lambda)$  along \v{C}ech nerves of Nis-loc covers. 

\begin{Theorem}
\label{Theorem:Extension of DM to Nis-loc_stack}
The functor $\DM (-, \Lambda)$ extends to an $\infty$-sheaf
\begin{center}
   $ \DM_{\ext} (-, \Lambda): \nislocst^{\op} \rightarrow \calg(\PrL_{\stb, \Lambda}).$
\end{center}
Moreover, for any $ \euX \in \nislocst$ with a schematic Nis-loc atlas $\pi:  X \rightarrow \euX$ we can compute $\DM_{\ext}  (\euX, \Lambda)$  on the \v{C}ech nerve of $\pi$. That is 
\begin{center}
    $\DM_{\ext}  (\euX, \Lambda) \simeq \varprojlim \Big( \DM (X, \Lambda) \rightrightarrows \DM (X \times_\euX X , \Lambda) \triplerightarrow \cdots \Big) $. 
\end{center}
\end{Theorem}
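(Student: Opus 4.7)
The plan is to reduce the theorem to the analogous sheafification--and--descent result for $\SH$ established in \cite{chowdhury2021motivic}, using the presentation $\DM(-,\Lambda) = \Mod_{H\Lambda_{(-)}}(\SH(-))$. The two inputs needed are (a) that $\DM(-, \Lambda)$ satisfies Nisnevich descent on the category of finite type $B$-schemes as a functor into $\calg(\PrL_{\stb, \Lambda})$, and (b) the general extension/Čech computation theorem \cite{chowdhury2021motivic}[Thm. 2.4.1] for Nis-loc stacks. Once (a) is granted, (b) immediately produces a sheaf $\DM_{\ext}(-, \Lambda)$ on $\nislocst$ whose value on $\euX$ is computed as a limit over the Čech nerve of any Nis-loc atlas.

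First I would verify (a). The stable motivic homotopy category $\SH(-)$ is a Nisnevich (in fact cdh) sheaf, so for a Nisnevich square we get a Cartesian diagram in $\calg(\PrL_{\stb})$. The assignment $\mathcal{C} \mapsto \Mod_{H\Lambda_{(-)}}(\mathcal{C})$ is computed via base change along the ring map $\mathbf{1} \to H\Lambda$, and since pullbacks along the symmetric monoidal $f^*$ send $H\Lambda_Y$ to $H\Lambda_X$ (as $H\Lambda$ is pulled back from $\spec(\Z)$ by \cite{Spitzweck}), the module-category construction commutes with the limits defining Nisnevich descent. Alternatively, one can simply invoke the equivalence $\DM(-, \Lambda) \simeq \DM_{\text{cdh}}(-, \Lambda)$ from \cite{integralmixedmotives}[Thm. 5.1], which by construction has cdh (and hence Nisnevich) descent. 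Symmetric monoidality and $\Lambda$-linearity of the structure functors is preserved because limits in $\calg(\PrL_{\stb, \Lambda})$ are created in $\PrL_{\stb}$.

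Second, I would feed this Nisnevich sheaf into the extension machinery of \cite{chowdhury2021motivic}[Thm. 2.4.1]. That theorem takes a sheaf on finite type $B$-schemes valued in a presentable $\infty$-category and produces an extension along the inclusion $\text{Sch}^{\text{ft}}_B \hookrightarrow \nislocst$, functorial in all morphisms of Nis-loc stacks, and guarantees that for a Nis-loc cover $\pi : X \to \euX$ by a scheme $X$, the Čech nerve $X^{\bullet/\euX}$ is a cofinal diagram for computing the right Kan extension. Consequently one obtains
\[
\DM_{\ext}(\euX, \Lambda) \simeq \varprojlim_{[n]\in \Delta} \DM(X^{n/\euX}, \Lambda),
\]
which is the asserted formula once one notes that each $X^{n/\euX}$ is a finite type $B$-scheme (the diagonal of $\euX$ being representable by schemes in our setting, or by noting that Nis-loc covers are schematic).

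The main obstacle is really contained in (a): checking that Nisnevich descent descends through the module construction in $\calg(\PrL_{\stb, \Lambda})$. This is technical but standard; the cleanest route is to cite the cdh-descent result of \cite{integralmixedmotives}, after which the remainder of the proof is a direct application of Chowdhury's extension theorem together with a verification that the resulting sheaf still lands in symmetric monoidal, $\Lambda$-linear, stable presentable $\infty$-categories, which is automatic since those structures are defined via limits and hence survive the Kan extension.
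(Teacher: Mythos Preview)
Your proposal is correct and follows essentially the same route as the paper: verify that $\DM(-,\Lambda)$ is a Nisnevich sheaf on finite type $B$-schemes (the paper does this via the fact that $\Mod$ commutes with limits applied to the Nisnevich sheaf $\SH$, which is your first alternative in (a)), and then feed this into the general extension theorem \cite{chowdhury2021motivic}[Thm.~2.4.1], which is stated for arbitrary $\infty$-sheaves. Your alternative route for (a) via \cite{integralmixedmotives} is also valid but not the one the paper takes.
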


\begin{proof}
The proof is the same as \cite{chowdhury2021motivic}[Cor. 2.5.1] with $\SH$ replaced by $ \DM (- , \Lambda)$. Note that as $\SH$ is a Nisnevich sheaf, by \cref{Proposition:Mod_commutes_with_limits} we see that $\DM(-,\Lambda)$ is a Nisnevich sheaf so we can apply \cite[Thm. 2.4.1]{chowdhury2021motivic} in mimicking the proof of \cite[Cor. 2.5.1]{chowdhury2021motivic}. We emphasize that the key result \cite{chowdhury2021motivic}[Thm. 2.4.1] is proved in the generality of an arbitrary $\infty$-sheaf. 
\end{proof}

\begin{Definition}
\label{Definition:DM_for_nis_loc_stacks}
Let $\euX$ be a Nis-Loc stack. We define the derived category of motives over $\euX$ with coefficients in an arbitrary commutative ring $\Lambda$  to be 
\begin{center}
    $\DM(\euX, \Lambda) : = \DM_{\ext} (\euX, \Lambda).$
\end{center}
\end{Definition}

\begin{Remark}
    We would like to take the opportunity to point out that the six functor formalism for $\DM$ of algebraic stacks has been considered in the literature in various places. In the case of quotient stacks there is \cite{SVW}[Ch. I]. For general stacks over $\mathbf{Q}$ there is \cite{RicharzScholbach}, and for general coefficients \cite{KaRa} [12.1]. Also there is the work \cite{hörmann2022derivator} in the setting of derivators.
\end{Remark}

The next couple of remarks record how the category defined in  \cref{Definition:DM_for_nis_loc_stacks} compares with other constructions in the literature. 

\begin{Remark}
As alluded too above, By \cite{KaRa}, one could just as well for an arbitrary locally finite type stack $\euX$  over $B$ define
\begin{center}
    $\DM_{\lhd}(\euX, \Lambda) : = \varprojlim_{(T,t)} \DM(T, \Lambda)$
\end{center}
where the limit is taken over the $\infty$-category $\text{Lis}_{\euX}$ of pairs $ (T, t)$ where $T$ is a scheme and $ t: T \rightarrow \euX$ is a smooth morphism. The same arguments used in \cite{chowdhury2021motivic}[Cor. 2.5.4] and \cite{KaRa}[Cor. 12.2.8] show that when $\euX$ is Nis-loc the categories $\DM_{\lhd}(\euX, \Lambda)$ and $\DM(\euX, \Lambda) $ are equivalent. 
\end{Remark}

\begin{Remark}
We can also define for an arbitrary locally finite type stack $\euX$ over $B$ the category
\begin{equation*}
\DM^!(\euX, \Lambda) : = \varprojlim_{{\text{Lis}}_{\euX}} \DM^!(S, \Lambda)
\end{equation*}
in $\PrR_{\stb, \Lambda}$. When $\euX$ is Nis-loc, the purity isomorphism implies that 
\begin{center}
$\DM(\euX, \Lambda) \simeq \DM^! (\euX, \Lambda)$. 
\end{center}
In particular when $\Lambda = \mathbf{Q}$ and $\euX$ is Nis-loc then  \cref{Definition:DM_for_nis_loc_stacks} agrees with the the derived category of motives constructed in \cite{RicharzScholbach}. 
\end{Remark}

Next we would like to give a more global description of $\DM(\euX, \Lambda)$. That is we would like to describe  $\DM(\euX, \Lambda)$ as a category of modules in $\SH(\euX)$ over some motivic $\mathbf{E}_\infty$-ring spectrum. To do this we will first start with a purely categorical statement. 

\begin{Proposition}
\label{Proposition:Mod_commutes_with_limits}
Suppose that $\mathcal{C}^{\otimes} \in  \calg (\text{Cat}^\otimes_\infty)$ is the limit of a diagram $q : I \rightarrow \calg (\text{Cat}^\otimes_\infty)$. Let $\Mod (\mathcal{C})$ be as in \cite{higher_algebra}[Def. 3.3.3.8], then we have a canonical equivalence
\begin{center}
    $\Mod (\mathcal{C}) \simeq \varprojlim_{i \in I} \Mod(\mathcal{C}_i)$
\end{center}
where $q(i) := \mathcal{C}^{\otimes}_i$.
\end{Proposition}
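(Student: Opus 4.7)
The plan is to use the explicit description of $\Mod(\mathcal{C})^\otimes$ as a full subcategory of a mapping $\infty$-category over $\operatorname{Fin}_*$, together with the fact that such mapping constructions preserve limits in the target.

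First, I would recall from \cite{higher_algebra}[Def. 3.3.3.8] that $\Mod(\mathcal{C})^\otimes$ is defined as a certain full subcategory of
\begin{equation*}
\Fun_{/\operatorname{Fin}_*}(\mathcal{K}^\otimes, \mathcal{C}^\otimes),
\end{equation*}
where $\mathcal{K}^\otimes \to \operatorname{Fin}_*$ is an auxiliary $\infty$-category depending only on $\operatorname{Fin}_*$ (encoding the ``algebra plus module'' data), and the full subcategory is cut out by fiberwise/cocartesian conditions that are preserved under pullback along any symmetric monoidal functor. In particular the construction is functorial in $\mathcal{C}^\otimes \in \calg(\Cat_\infty^\otimes)$.

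Second, I would combine two standard facts. (i) The forgetful functor $\calg(\Cat_\infty^\otimes) \to (\Cat_\infty)_{/\operatorname{Fin}_*}$ preserves limits, so $\mathcal{C}^\otimes = \varprojlim_i q(i)$ also holds in $(\Cat_\infty)_{/\operatorname{Fin}_*}$. (ii) For a fixed source $\mathcal{K}^\otimes$, the functor $\mathcal{C}^\otimes \mapsto \Fun_{/\operatorname{Fin}_*}(\mathcal{K}^\otimes, \mathcal{C}^\otimes)$ preserves limits in the target, and the passage to the full subcategory defined by the fiberwise conditions of step one also preserves limits, since the defining conditions are pointwise on $\operatorname{Fin}_*$. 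Together these yield an equivalence of $\infty$-operads
\begin{equation*}
\Mod(\mathcal{C})^\otimes \simeq \varprojlim_{i \in I} \Mod(\mathcal{C}_i)^\otimes.
\end{equation*}

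Finally, the underlying $\infty$-category $\Mod(\mathcal{C})$ is the fiber of $\Mod(\mathcal{C})^\otimes \to \operatorname{Fin}_*$ over $\langle 1 \rangle$, which is itself a pullback in $\Cat_\infty$ and hence commutes with the limit over $I$, giving the desired
\begin{equation*}
\Mod(\mathcal{C}) \simeq \varprojlim_{i \in I} \Mod(\mathcal{C}_i).
\end{equation*}
The main obstacle, which I expect to be purely bookkeeping, is verifying that a section of the limit cocartesian fibration satisfies the ``algebra plus module'' conditions if and only if each of its projections does, so that cutting out the full subcategory really is compatible with the limit; this is immediate from the pointwise nature of those conditions on $\operatorname{Fin}_*$.
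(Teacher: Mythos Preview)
Your proposal is correct and follows essentially the same strategy as the paper: both realize $\Mod(\mathcal{C})$ as a full subcategory of a functor category $\Fun(\text{fixed source}, \mathcal{C}^\otimes)$ cut out by pointwise conditions, use that such functor categories commute with limits in the target, and then verify that the defining conditions pass to the limit. The only cosmetic difference is that the paper invokes the description $\Mod(\mathcal{C}) \simeq \operatorname{Alg}_{\mathbf{Pf}^\otimes}(\mathcal{C})$ from Robalo's thesis (with the auxiliary operad $\mathbf{Pf}^\otimes$), whereas you work directly with the $\mathcal{K}^\otimes$ of \cite{higher_algebra}[Def.~3.3.3.8]; the underlying argument is identical.
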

\begin{proof}
The $\infty$-category of modules associated to a symmetric monoidal $\infty$-category $\mathcal{C}$ is equivalent to the $\infty$-category $\operatorname{Alg}_{\mathbf{Pf}^{\otimes}}(\mathcal{C})$ of algebra objects associated to the $\infty$-operad $\mathbf{Pf}^{\otimes}$ (\cite[Section 9.4.1.2]{Robalothesis}). Thus we can realize $\Mod(\mathcal{C})$ as a full  subcategory of the functor category $\operatorname{Fun}(\mathbf{Pf}^{\otimes},\mathcal{C}^{\otimes})$ spanned by objects $ p : \mathbf{Pf}^{\otimes} \to \mathcal{C}^{\otimes}$ which commute with the usual projection maps to $N(\operatorname{Fin}_*)$. Firstly, we see that we have the following chain of equivalences :
\begin{equation}
    \Fun(\mathbf{Pf}^{\otimes},\mathcal{C}^{\otimes}) \simeq \Fun(\mathbf{Pf}^{\otimes},\varprojlim_{i \in I} \mathcal{C}_i^{\otimes}) \simeq \varprojlim_{i \in I}\Fun(\mathbf{Pf}^{\otimes},\mathcal{C}_i^{\otimes}).
\end{equation}
In order to get the equivalence on the level of module categories, we are reduced to check if $\{p_i: \mathbf{Pf}^{\otimes} \to \mathcal{C}_i^{\otimes}\}_{i \in I}$ is a compatible family of morphisms commuting to $N(\operatorname{Fin}_*)$, then the limit morphims $p : \mathbf{Pf}^{\otimes} \to \mathcal{C}^{\otimes}$ commutes with projection to $N(\operatorname{Fin}_*)$. This is because $\calg(\text{Cat}^{\otimes}_{\infty})$ admits limits (\cite[Proposition 3.2.2.1]{higher_algebra}).
\end{proof}

\begin{Lemma}
\label{Lemma:Extension_of_motivic_cohomology_to_stacks}
Let $\euX$ be a Nis-loc stack and $\Lambda$ an arbitrary commutative ring. Then there is a canonically defined object $H\Lambda_\euX \in \calg(\SH(\euX)) $ whose restriction along any morphism $ f: U \rightarrow \euX$ from a scheme $U$ is canonically equivalent to $H\Lambda_U \in \calg(\SH(U))$. 
\end{Lemma}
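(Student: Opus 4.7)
The plan is to descend the motivic Eilenberg--MacLane spectrum from schemes to Nis-loc stacks using the sheaf property of $\SH$ together with the fact that taking commutative algebras commutes with limits. The starting observation is that Spitzweck's construction is natural on schemes: for every morphism $f : T \to S$ of finite type $B$-schemes, there is a canonical equivalence $f^* H\Lambda_S \simeq H\Lambda_T$ in $\calg(\SH(T))$, and these equivalences are coherent because $H\Lambda_S$ is obtained by $*$-pullback from a single object $H\Lambda_{\spec(\Z)} \in \calg(\SH(\spec(\Z)))$. Phrased categorically, the collection $\{H\Lambda_S\}$ assembles into a global section of the functor $\calg(\SH(-)) : (\text{Sch}^{\text{ft}}_B)^{\op} \to \calg(\Cat_\infty)$, i.e. a point of $\varprojlim_S \calg(\SH(S))$.

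Next, I would check that $\calg(\SH(-))$ is a Nisnevich $\infty$-sheaf on schemes. Since $\SH$ is a Nisnevich $\infty$-sheaf and the formation of commutative algebra objects commutes with limits in $\calg(\Cat_\infty^\otimes)$ (the $\calg$-analogue of \cref{Proposition:Mod_commutes_with_limits}, or equivalently \cite{higher_algebra}[Cor. 3.2.2.5]), this follows immediately. Applying \cite{chowdhury2021motivic}[Thm. 2.4.1] to $\calg(\SH(-))$, exactly as in the proof of \cref{Theorem:Extension of DM to Nis-loc_stack}, yields an $\infty$-sheaf
\begin{equation*}
\calg(\SH(-))_{\ext} : \nislocst^{\op} \to \calg(\Cat_\infty),
\end{equation*}
and under the standing affine diagonal hypothesis (which guarantees that the Čech nerve of a schematic Nis-loc atlas $\pi: X \to \euX$ is levelwise a scheme) we get
\begin{equation*}
\calg(\SH(\euX)) \;\simeq\; \varprojlim_{[n] \in \Delta} \calg(\SH(X^n)).
\end{equation*}
The compatible family $\{H\Lambda_{X^n}\}$ coming from the previous paragraph defines a point of this limit, and this point is by definition $H\Lambda_\euX$.

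Finally, I would verify the restriction property for arbitrary $f : U \to \euX$ with $U$ a scheme. Pull back the chosen atlas to obtain $p : X_U := X \times_\euX U \to U$, which is a Nis-loc cover of $U$ by the definition of a Nis-loc atlas. By Nisnevich descent for $\calg(\SH(-))$ on schemes, it suffices to show $p^* f^* H\Lambda_\euX \simeq p^* H\Lambda_U$ in $\calg(\SH(X_U))$, together with the cocycle condition on $X_U \times_U X_U$. On the one hand, $p^* f^* H\Lambda_\euX$ is the restriction of $H\Lambda_\euX$ along $X_U \to X \to \euX$, which by construction of $H\Lambda_\euX$ as the descent datum of $\{H\Lambda_{X^n}\}$ agrees with $H\Lambda_{X_U}$. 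On the other hand, $p^* H\Lambda_U \simeq H\Lambda_{X_U}$ by the naturality of Spitzweck's construction on schemes. The cocycle condition is verified in the same way on $X_U \times_U X_U$, completing the argument.

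The main obstacle is the very first step: turning the point-wise naturality of Spitzweck's spectrum into a genuine section of the $\infty$-functor $\calg(\SH(-))$, rather than a mere compatible family of equivalences. This is what justifies appealing to the fact that $H\Lambda$ is pulled back from a single object over $\spec(\Z)$, since that immediately furnishes a Cartesian section (equivalently, a point of the limit) with all higher coherences built in. Once this is in hand, the rest of the argument is a formal application of the Nis-loc extension machinery of \cite{chowdhury2021motivic}.
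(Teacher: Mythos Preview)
Your proof is correct but takes a genuinely more elaborate route than the paper. The paper simply defines $H\Lambda_\euX := f^* H\Lambda_B$, where $f:\euX\to B$ is the structure morphism: since $\SH(\euX)$ has already been constructed and $f^*$ is symmetric monoidal by the very definition of the extension, this immediately lands in $\calg(\SH(\euX))$, and the restriction property for $g:U\to\euX$ follows from $g^*f^*H\Lambda_B \simeq (f\circ g)^*H\Lambda_B \simeq H\Lambda_U$ via the scheme-level compatibility of Spitzweck's spectrum with $*$-pullback. You instead extend $\calg(\SH(-))$ to Nis-loc stacks and descend the compatible family $\{H\Lambda_{X^n}\}$ through the \v{C}ech nerve. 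It is telling that you already isolate the key ingredient---$H\Lambda$ is pulled back from a single object over the base---but use it only to justify coherence of your section rather than as the definition itself. The paper's approach is a one-line shortcut exploiting that $f^*$ exists before one ever mentions $\calg$; your approach makes the descent structure explicit and would adapt to cartesian sections not arising by pullback from $B$, at the cost of invoking the extension machinery a second time for $\calg(\SH(-))$.
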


\begin{proof}
Consider the ring spectrum $H\Lambda_B \in \calg(\SH(B))$ constructed in \cite{Spitzweck} and \cite{Hoyois}. We define $H\Lambda_\euX$ to be $ f^* H\Lambda_B$. Where $f:\euX \rightarrow B$ is the structure morphism. It follows directly from the definition of $\SH(\euX)$ \cite{chowdhury2021motivic}[Cor. 2.5.1] that $f^*$ is a symmetric monoidal functor and thus $H\Lambda_\euX $ is contained in $\calg(\SH(\euX))$. Then via \cite{Hoyois}[Lem. 20] it follows that $H\Lambda$ has the desired property. 
\end{proof}

\begin{Theorem}
\label{Theorem:Mod_agrees_with_Kan_extension}
Let $\euX$ be a Nis-loc stack over $B$ and $\Lambda$ an arbitrary commutative ring. Then we have the following canonical equivalence
\begin{center}
    $\Mod_{H\Lambda_{\euX}} (\SH(\euX)) \simeq \DM(\euX, \Lambda)$.
\end{center}
\end{Theorem}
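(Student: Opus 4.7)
The plan is to reduce the statement to the definition of $\DM$ on schemes together with the fact that both $\SH(-)$ and $\DM(-,\Lambda)$ satisfy descent along the Čech nerve of a Nis-loc atlas. Fix a schematic Nis-loc atlas $\pi\colon X \to \euX$ and let $X_\bullet$ denote its Čech nerve. By \cref{Theorem:Extension of DM to Nis-loc_stack}, applied both to $\SH$ (as in \cite{chowdhury2021motivic}) and to $\DM(-,\Lambda)$, we have canonical equivalences
\begin{equation*}
\SH(\euX) \simeq \varprojlim_{[n] \in \Delta} \SH(X_n), \qquad \DM(\euX, \Lambda) \simeq \varprojlim_{[n] \in \Delta} \DM(X_n, \Lambda)
\end{equation*}
in $\calg(\PrL_{\stb})$ and $\calg(\PrL_{\stb, \Lambda})$ respectively. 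Since $X_n$ is a scheme for every $n$, we have $\DM(X_n,\Lambda) = \Mod_{H\Lambda_{X_n}}(\SH(X_n))$ by definition.

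Next I would invoke \cref{Proposition:Mod_commutes_with_limits} in a fiberwise form. The proposition as stated identifies the \textbf{total} module $\infty$-category $\Mod(\mathcal{C})$ of the limit $\mathcal{C} = \varprojlim \mathcal{C}_i$ with the limit $\varprojlim \Mod(\mathcal{C}_i)$. The map $\Mod(\mathcal{C}) \to \calg(\mathcal{C})$ that records the underlying algebra is a coCartesian fibration, and $\Mod_A(\mathcal{C})$ is its fiber over $A$. Taking fibers commutes with limits of simplicial sets/$\infty$-categories, so the equivalence of \cref{Proposition:Mod_commutes_with_limits} passes to fibers: for any compatible system $\{A_i \in \calg(\mathcal{C}_i)\}$ assembling into $A \in \calg(\mathcal{C})$,
\begin{equation*}
\Mod_A(\mathcal{C}) \simeq \varprojlim_{i \in I} \Mod_{A_i}(\mathcal{C}_i).
\end{equation*}

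I would now apply this with $\mathcal{C}_n = \SH(X_n)$ and $A_n = H\Lambda_{X_n}$. By \cref{Lemma:Extension_of_motivic_cohomology_to_stacks}, the spectrum $H\Lambda_\euX$ restricts to $H\Lambda_{X_n}$ on each simplicial degree, so the system $\{H\Lambda_{X_n}\}$ indeed assembles into $H\Lambda_\euX \in \calg(\SH(\euX))$ under the limit description of $\SH(\euX)$. Combining these ingredients yields
\begin{equation*}
\Mod_{H\Lambda_\euX}(\SH(\euX)) \simeq \varprojlim_{[n]\in\Delta} \Mod_{H\Lambda_{X_n}}(\SH(X_n)) \simeq \varprojlim_{[n]\in\Delta} \DM(X_n,\Lambda) \simeq \DM(\euX,\Lambda),
\end{equation*}
which is the desired equivalence.

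The main obstacle is the fiberwise upgrade of \cref{Proposition:Mod_commutes_with_limits}: one must verify that the equivalence $\Mod(\mathcal{C}) \simeq \varprojlim \Mod(\mathcal{C}_i)$ is compatible with the projections to the algebra $\infty$-categories and with the coCartesian fibration structure, so that passing to the fiber over the compatible algebra system is legitimate. Everything else is a formal manipulation once both $\SH$ and $\DM$ have been identified as limits along the same Čech nerve and $H\Lambda$ has been identified fiberwise.
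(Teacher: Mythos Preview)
Your proposal is correct and follows essentially the same route as the paper: descend $\SH$ along the \v{C}ech nerve, apply \cref{Proposition:Mod_commutes_with_limits}, take fibers over the compatible system $\{H\Lambda_{X_n}\}$ using \cref{Lemma:Extension_of_motivic_cohomology_to_stacks}, and identify the result with the limit computing $\DM(\euX,\Lambda)$. If anything, you are more explicit than the paper about the fiberwise step, which the paper handles in a single sentence (``Taking the fiber of the equivalence over the canonical Eilenberg--MacLane spectrum\ldots'').
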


\begin{proof}

As $\euX$ is Nis-loc, for an atlas $x: X \to \euX$ admitting Nisnevich-local sections, we have the following equivalence:
\begin{equation*}
    \SH(\euX) \simeq  \varprojlim \Big( \SH (X) \rightrightarrows \SH(X \times_\euX X) \triplerightarrow \cdots \Big).
\end{equation*}
Applying \cref{Proposition:Mod_commutes_with_limits} to $\mathcal{C}= \SH(\euX),\  I = N(\Delta),\ \mathcal{C}_i= \SH(X^i_{\euX})$, where $X^i_{\euX}:= X \times_{\euX} X \cdots_{\text{(i+1)-times}} X$, we get the equivalence, 
\begin{equation*}
    \Mod(\SH(\euX)) \simeq \varprojlim_{i\in \Delta} \Mod(\SH(X^i_{\euX})).
\end{equation*}
Taking the fiber of the equivalence over the canonical Eilenberg-Maclane spectrum $H\Lambda_{\euX} \simeq \varprojlim_{i \in \Delta} H\Lambda_{X^i_{\euX}}$ (\cref{Lemma:Extension_of_motivic_cohomology_to_stacks}), we get that
\begin{equation*}
    \Mod_{H\Lambda_{\euX}}(\SH(\euX)) \cong \varprojlim_{i \in I} \Mod_{H(X^i_{\euX})}(\SH(X^i_{\euX}).
\end{equation*}
By definition of $\DM$ on the level of schemes along with \cref{Theorem:Extension of DM to Nis-loc_stack}, we get that
\begin{equation*}
    \Mod_{H\Lambda_{\euX}}(\SH(\euX)) \simeq \DM(\euX,\Lambda)
\end{equation*}
completing the proof.
\end{proof}

\begin{Remark} One could envision another construction of $\DM(\euX, \Lambda)$ more along the lines of \cite{CiDeg}[Def. 11.1.1]. That is consider the category of stable motivic complexes on a given Nis-loc stack. It would be interesting to compare this with \cref{Definition:DM_for_nis_loc_stacks}. 
\end{Remark}

We will now explain how the six functor formalism for $\DM(-, \Lambda)$ on schemes generalizes to $\nislocst$. 

\begin{Proposition}
\label{Proposition:4_functors_for_DM}
(4-functors)
The functor
\begin{equation*}
    \DM(- ,\Lambda) : \nislocst^{\op} \rightarrow \calg(\PrL_{\stb, \Lambda})
\end{equation*}
has the following 4-functor formalism:
\begin{enumerate}
    \item For every morphism $ f: \euX \rightarrow \euY$ in $\nislocst$ we have a pair of adjoints $(f^*, f_*)$, such that $f^*$ is symmetric monoidal. 

    \item For every $\euX$ in $\nislocst$ there are functors 
    \begin{eqnarray*}
    - \otimes - : \DM(\euX, \Lambda) \times \DM(\euX, \Lambda) \rightarrow \DM(\euX, \Lambda)  \\
    \underline{\map}(-, -) : \DM(\euX, \Lambda) ^{\op}  \times \DM(\euX, \Lambda) \rightarrow \DM(\euX, \Lambda)
    \end{eqnarray*}
    which form an adjoint pair $(\otimes, \underline{\map})$. i.e. $\DM(\euX, \Lambda)$ is a closed symmetric monoidal $\infty$-category. 

\end{enumerate}
\end{Proposition}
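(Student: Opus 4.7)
The plan is to deduce both parts of the proposition directly from \cref{Theorem:Extension of DM to Nis-loc_stack}, which already packages $\DM(-, \Lambda)$ as a functor valued in $\calg(\PrL_{\stb, \Lambda})$. The key point is that essentially all of the structural content of the proposition is already encoded in the target $\infty$-category, so the argument reduces to unpacking definitions.

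For part (1), a morphism $f: \euX \to \euY$ in $\nislocst$ is sent by $\DM(-, \Lambda)$ to a $1$-morphism in $\calg(\PrL_{\stb, \Lambda})$. By definition this is a symmetric monoidal functor $f^*: \DM(\euY, \Lambda) \to \DM(\euX, \Lambda)$ whose underlying functor lies in $\PrL$, and $1$-morphisms in $\PrL$ are by definition left adjoints between presentable $\infty$-categories. Hence $f^*$ automatically admits a right adjoint $f_*$, and it is symmetric monoidal by construction.

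For part (2), for each $\euX \in \nislocst$ the object $\DM(\euX, \Lambda) \in \calg(\PrL_{\stb, \Lambda})$ is by definition a $\Lambda$-linear stable presentable symmetric monoidal $\infty$-category whose tensor product is bi-cocontinuous. Consequently, for each $\mathcal{F} \in \DM(\euX, \Lambda)$ the functor $- \otimes \mathcal{F}$ is a $1$-morphism in $\PrL$, and so admits a right adjoint $\underline{\map}(\mathcal{F}, -)$ by the same observation used in (1). Assembling these right adjoints as $\mathcal{F}$ varies yields the desired bifunctor $\underline{\map}(-, -)$, giving the adjoint pair $(\otimes, \underline{\map})$ and hence the closed symmetric monoidal structure on $\DM(\euX, \Lambda)$.

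Since everything follows formally from \cref{Theorem:Extension of DM to Nis-loc_stack}, I do not anticipate any serious obstacle. The only subtlety worth double-checking is that the Nis-loc extension genuinely lands in $\calg(\PrL_{\stb, \Lambda})$ and not in some larger $\infty$-category, but this is built into the proof of \cref{Theorem:Extension of DM to Nis-loc_stack} via the fact that $\calg(\PrL_{\stb, \Lambda})$ admits small limits (\cite{higher_algebra}[Prop. 3.2.2.1]) computed on underlying $\infty$-categories, so the limit defining $\DM_{\ext}(\euX, \Lambda)$ inherits the required structure automatically.
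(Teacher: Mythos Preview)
Your proposal is correct and takes essentially the same approach as the paper. For part (1) the paper argues identically: $f^*$ is symmetric monoidal by construction, and being a morphism in $\PrL$ it is colimit-preserving, so the adjoint functor theorem supplies $f_*$. For part (2) the paper simply defers to the proof of \cite{chowdhury2021motivic}[Prop.~2.5.6] with $\SH$ replaced by $\DM$, whereas you spell out the standard argument directly (bi-cocontinuity of $\otimes$ in $\calg(\PrL)$ plus the adjoint functor theorem); these amount to the same thing.
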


\begin{proof}
The existence of $f^*$ and the fact that it is symmetric monoidal follows directly from \cref{Definition:DM_for_nis_loc_stacks}. Moreoever since $f^*$ is colimit preserving by Lurie's adjoint functor theorem \cite{higher_topos_theory}[Cor. 5.5.2.9] there exists a right adjoint which we denote by $f_*$. 

To see that $\DM(\euX, \Lambda)$ is a closed symmetric monoidal $\infty$-category one can simply use the proof of \cite{chowdhury2021motivic}[Prop. 2.5.6] replacing $\SH$ with $\DM$. 
\end{proof}

The next proposition records the existence of $f_{\#}$ for smooth and representable morphisms and its properties under base change. This will be important for defining the category of \emph{geometric motives}. 

\begin{Proposition}
    \label{Proposition:Existence_of_motives}
Let $ f: \euX \rightarrow \euY$ be a smooth representable morphism in $\nislocst$. Then $f^*$ admits a left adjoint $f_{\#}$. Moreover for any cartesian square 
\begin{center}
	\begin{tikzpicture}
		\node (TL) at (0 , 1.2) {$\euX'$};
		\node (TR) at (1.2, 1.2) {$\euY'$};
		\node (BL) at (0, 0){$\euX$};
        \node (BR) at (1.2,0){$\euY$};
		
		\draw[->] (TL) -- (TR) node [midway, above] {$f'$};
		\draw[->] (TL) -- (BL)node [midway, left] {$g'$};
		\draw[->] (TR) -- (BR)node [midway, right] {$g$};
        \draw[->] (BL) -- (BR) node [midway, below] {$f$};
	\end{tikzpicture}
 \end{center}
 in $\nislocst$, where $f$ is smooth and representable and $g$ is arbitrary, we have an equivalence
 \begin{equation*}
     g^*f_{\#} \simeq f'_{\#} g'^*. 
 \end{equation*}
\end{Proposition}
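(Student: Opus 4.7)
The plan is to reduce to the scheme case by descent along a Nis-loc atlas, following the pattern of \cite{chowdhury2021motivic}[Prop. 2.5.6] with $\SH$ replaced by $\DM(-,\Lambda)$. The key inputs are the existence of $f_\#$ and smooth base change for the six-functor formalism on schemes recalled earlier, together with the \v{C}ech-nerve description of $\DM(-,\Lambda)$ from \cref{Theorem:Extension of DM to Nis-loc_stack}.

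Concretely, I would first pick a Nis-loc atlas $p : Y \to \euY$ with $Y$ a scheme. Since $f$ is representable, the pullback $X := \euX \times_\euY Y$ is a scheme and $\pi : X \to \euX$ is a Nis-loc atlas. Base-changing $f$ along the \v{C}ech nerve of $p$ produces a cosimplicial diagram of smooth morphisms of schemes $f_n : X^n_\euX \to Y^n_\euY$, and $f^*$ is identified with $\varprojlim_n (f_n)^*$. At the scheme level each $(f_n)^*$ has a left adjoint $(f_n)_\#$, and the smooth base change property on schemes (part of the six-functor formalism on $\DM$) shows that for each face map in the \v{C}ech nerve the Beck--Chevalley transformation between these left adjoints is an equivalence. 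This levelwise compatibility allows the family $\{(f_n)_\#\}$ to assemble into a functor $f_\# : \DM(\euX, \Lambda) \to \DM(\euY, \Lambda)$ on the limit, which is then left adjoint to $f^*$ by a standard argument on mapping spaces in a limit $\infty$-category.

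For the base-change claim, observe that both sides of $g^* f_\# \simeq f'_\# g'^*$ are functors landing in $\DM(\euY', \Lambda)$, so the equivalence can be verified after restricting along a Nis-loc atlas of $\euY'$. Since representability is stable under base change, $f'$ is again smooth and representable, and by choosing compatible Nis-loc atlases of all four stacks in the Cartesian square (starting with an atlas of $\euY'$ and pulling back through $g$, $f'$, and $g'$) one obtains a commuting cube of \v{C}ech nerves of schemes. The required equivalence then follows levelwise from the smooth base change for $\DM(-, \Lambda)$ on schemes.

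The main technical obstacle is the $\infty$-categorical bookkeeping: one must verify that the levelwise left adjoints $(f_n)_\#$ genuinely glue to a functor on the limit $\infty$-category, rather than only to a family of left adjoints compatible up to unspecified coherent homotopy. This is handled uniformly by the general principle that left adjoints exist in limits of $\infty$-categories as soon as the Beck--Chevalley condition holds at every stage of the diagram, so that the nontrivial content reduces to verifying that every Beck--Chevalley square appearing is an instance of smooth base change for $\DM(-,\Lambda)$ on schemes, which is built into the motivic-category structure of \cite{CiDeg}.
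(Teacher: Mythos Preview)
Your approach is essentially the same as the paper's: the paper simply observes that the proof of \cite{chowdhury2021motivic}[Prop.~4.1.2] carries over with $\SH$ replaced by $\DM(-,\Lambda)$, and your detailed sketch is a faithful unpacking of that descent-plus-Beck--Chevalley argument. Note, however, that the relevant reference in \cite{chowdhury2021motivic} is Prop.~4.1.2 (existence of $f_\#$ and smooth base change for $\SH$), not Prop.~2.5.6, which concerns the closed monoidal structure and is cited in the paper for \cref{Proposition:4_functors_for_DM} instead.
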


\begin{proof}
The proof of \cite{chowdhury2021motivic}[Prop. 4.1.2] goes through in this situation with $\SH$ replaced with $\DM$. 
\end{proof}

\begin{Construction}
(Tate twists)
Let $\euX$ be a Nis-loc stack and  $p: {\mathbf{G}_m} \times \euX \rightarrow \euX $ be the projection. Let $ \mathcal{F} \in \DM(\euX, \Lambda)$, then the map induced by the counit 
\begin{center}
    $p_{\#} p^* \mathcal{F}[-1] \rightarrow \mathcal{F}[-1] $
\end{center}
is a split monomorphism. We denote the complementary summand by $\mathcal{F} (1) $. It follows by an easy descent argument that the functor
\begin{center}
$\mathcal{F} \mapsto \mathcal{F}(1)$
\end{center}
is invertible. In particular we have Tate twists for all $ n \in \mathbf{Z}$. 
\end{Construction}

We now record the existence of exceptional functors, base change, projection formulas and proper pushforward formulas. The formulas stated here are in the full generality of not necessarily representable morphism. The proofs in this case will be contained in forthcoming work of the second named author and Alessandro D'Angelo. We wish to emphasize that for the present work \emph{we do not need this level of generality} and for us it enough to assume that all morphisms are \emph{representable} and in this case the arguments of \cite{chowdhury2021motivic} apply.  

\begin{Proposition}
\label{Proposition:Existence_of_exceptional_functors}
For any locally of finite type morphism $ f: \euX \rightarrow \euY$ of Nis-loc stacks there exist functors
\begin{eqnarray*}
    f_! : \DM(\euX, \Lambda) \rightarrow \DM(\euY, \Lambda) \\
    f^! : \DM(\euY, \Lambda) \rightarrow \DM(\euX, \Lambda)
\end{eqnarray*}
which form an adjoint pair $ (f_! , f^!)$ and satisfy:
\begin{enumerate}
    \item (Projection formula)  Let $\mathcal{E}, \mathcal{E}' \in \DM(\euY, \Lambda) $ and $\mathcal{F} \in \DM(\euX, \Lambda)$ we have the following equivalences
    \begin{eqnarray*}
        f_! (\mathcal{F} \otimes f^* (\mathcal{E}) & \simeq & f_!(\mathcal{F})\otimes \mathcal{E} \\
        f^! \underline{\map}_{\DM(\euY, \Lambda)} (\mathcal{E}, \mathcal{E}') & \simeq & \underline{\map}_{\DM(\euX, \Lambda)} (f^* \mathcal{E}, f^! \mathcal{E}'). 
    \end{eqnarray*}
    \item (Base change) If 
    \begin{center}
	\begin{tikzpicture}
		\node (TL) at (0 , 1.2) {$\euX'$};
		\node (TR) at (1.2, 1.2) {$\euY'$};
		\node (BL) at (0, 0){$\euX$};
        \node (BR) at (1.2,0){$\euY$};
		
		\draw[->] (TL) -- (TR) node [midway, above] {$f'$};
		\draw[->] (TL) -- (BL)node [midway, left] {$g'$};
		\draw[->] (TR) -- (BR)node [midway, right] {$g$};
        \draw[->] (BL) -- (BR) node [midway, below] {$f$};
	\end{tikzpicture}
 \end{center}
 is a cartesian square in $\nislocst$, of locally finite type morphisms. We have the following equivalences 
 \begin{eqnarray*}
     g'_! f'^*  \simeq  f^* g_!\\
     f'_* g'^! \simeq g^! f_*
 \end{eqnarray*}

 \item (Proper pushforward) If $f: \euX \rightarrow \euY$ is proper and representable, then there exists  a natural isomorphism
 \begin{center}
    $ \alpha_f : f_! \simeq f_*. $
 \end{center}
\end{enumerate}
\end{Proposition}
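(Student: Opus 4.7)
The plan is to reduce everything to the scheme-level six-functor formalism via Nis-loc descent, following the strategy of \cite{chowdhury2021motivic}. As the authors explicitly note, for the purposes of this paper only the representable case is needed, and this is what my proposal will handle; the non-representable case is deferred to forthcoming work.

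First, I would choose a schematic Nis-loc atlas $y: Y \to \euY$. Since $f: \euX \to \euY$ is representable, pulling back yields a schematic Nis-loc atlas $x: X \to \euX$ together with a locally finite type morphism of schemes $f_0: X \to Y$. Taking \v{C}ech nerves gives simplicial schemes $X_\bullet, Y_\bullet$ with a map $f_\bullet: X_\bullet \to Y_\bullet$ such that each $f_n: X_n \to Y_n$ is locally of finite type. By \cref{Theorem:Extension of DM to Nis-loc_stack} we have $\DM(\euX, \Lambda) \simeq \varprojlim_n \DM(X_n, \Lambda)$ and similarly for $\euY$. To define $f_!$ and $f^!$, I would assemble the scheme-level functors $(f_n)_!$ and $f_n^!$ across this cosimplicial diagram. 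The critical input is scheme-level smooth base change: the face maps in the \v{C}ech nerve are smooth (in fact étale, by the Nis-loc atlas property), so the equivalences $g'_! f'^* \simeq f^* g_!$ and $f'_* g'^! \simeq g^! f_*$ on schemes ensure that $\{(f_n)_!\}$ and $\{f_n^!\}$ glue into morphisms of cosimplicial diagrams of $\infty$-categories, giving $f_!$ and $f^!$ on the limits. The adjunction $(f_!, f^!)$ descends levelwise, since limits of adjoint pairs in $\PrL_{\stb, \Lambda}$ are again adjoint pairs.

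For items (1)--(3), each is a natural equivalence (or cofiber sequence) which can be checked after pullback along a Nis-loc atlas; via the \v{C}ech nerve description it reduces to the corresponding scheme-level statement, which is part of the scheme-level six-functor formalism recalled above. For (3), if $f: \euX \to \euY$ is proper and representable, then each base change $f_n: X_n \to Y_n$ is a proper morphism of schemes, so the scheme-level equivalence $\alpha_{f_n}: (f_n)_! \simeq (f_n)_*$ descends, under the limit identifications, to an equivalence $\alpha_f: f_! \simeq f_*$.

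The main obstacle here is not conceptual but rather the coherence bookkeeping: one must verify that the scheme-level base change equivalences glue coherently across \emph{all} simplices of the \v{C}ech nerve so that the descent data genuinely assemble into $\infty$-functors with the claimed adjunctions and natural equivalences. This is precisely where the arguments of \cite{chowdhury2021motivic} for $\SH$ apply verbatim with $\SH$ replaced by $\DM(-,\Lambda)$, since the only structural inputs used are the scheme-level six-functor formalism and smooth/Nisnevich base change, both of which are available for $\DM(-, \Lambda)$ by the recollection at the start of this section and by \cref{Theorem:Mod_agrees_with_Kan_extension}.
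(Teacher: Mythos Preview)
Your proposal is correct and takes essentially the same approach as the paper: both defer the non-representable case to forthcoming work and, for the representable case, invoke the arguments of \cite{chowdhury2021motivic} (specifically Thm.~4.5.1 there) carried over verbatim from $\SH$ to $\DM(-,\Lambda)$. You give a more detailed sketch of what that descent argument actually looks like than the paper does, which simply cites the reference. One small slip: the face maps in the \v{C}ech nerve of a Nis-loc atlas are smooth, not \'etale in general (Nis-loc means the smooth atlas admits Nisnevich-local sections), but smoothness is all that is needed for the base change argument, so this does not affect the validity.
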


\begin{proof}
The proof of this in full generality will be contained in forthcoming work \cite{ChowdhuryD'Angelo}.

For the purposes of this article, we will only need to use the result in the representable case and for this the same argument used in \cite{chowdhury2021motivic}[Thm. 4.5.1] carry over verbatim.
\end{proof}

\begin{Proposition}
\label{Proposition:purity}
(Purity) The Nisnevich sheaf $\DM_{\gm} (- , \Lambda)$ on $\nislocst$ is oriented. Moreover we have 

\begin{enumerate}
    \item For any smooth representable morphism of relative dimension $d$, there is a natural isomorphism
\begin{center}
    $f^! \simeq f^* (d)[2d]$. 
\end{center}
    \item For a closed immersion $ i: \euZ \hookrightarrow \euX$ between regular Nis-loc stacks of codimension $c$ we have
\begin{center}
    $i^! 1_\euX \simeq 1_\euZ (-c)[-2d]$. 
\end{center}
\end{enumerate}
\end{Proposition}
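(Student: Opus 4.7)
The strategy is to reduce both statements to the analogous facts for schemes via Nis-loc descent, using \cref{Theorem:Extension of DM to Nis-loc_stack}. Note first that the proposition is really a statement about $\DM(-,\Lambda)$ (so the smaller category $\DM_{\gm}(-,\Lambda)$ inherits it automatically). For the orientation statement, an orientation on the Nisnevich sheaf $\DM(-,\Lambda)$ amounts to a compatible system of Thom classes that is stable under pullback. On schemes, $\DM(X,\Lambda) = \Mod_{H\Lambda_X}(\SH(X))$ inherits the canonical orientation of $H\Lambda$ from the construction of \cite{Spitzweck} and \cite{Hoyois}, and this orientation is preserved by arbitrary pullbacks. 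Thus for any Nis-loc atlas $p: X \to \euX$, the orientation data on each $X^i_\euX$ assembles into a datum on the Čech nerve, yielding an orientation on $\DM(\euX,\Lambda)$ by \cref{Theorem:Mod_agrees_with_Kan_extension}.

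For part (1), given a smooth representable morphism $f: \euX \to \euY$ of relative dimension $d$, pick a Nis-loc atlas $p: Y \to \euY$ and form the pullback, obtaining a Nis-loc atlas $X := \euX \times_\euY Y \to \euX$ together with the base-changed smooth map $f_Y: X \to Y$. By \cref{Proposition:Existence_of_exceptional_functors}(2), the functors $f^*$ and $f^!$ can be computed on the Čech nerves. At each simplicial level the induced map $X^i_{\euX} \to Y^i_{\euY}$ is a smooth morphism of schemes of relative dimension $d$, so the scheme-level purity isomorphism gives a natural equivalence $(f_i)^! \simeq (f_i)^*(d)[2d]$. Naturality of this isomorphism under smooth base change guarantees compatibility with the face and degeneracy maps of the Čech nerve, so the isomorphisms glue to the desired natural equivalence $f^! \simeq f^*(d)[2d]$ in $\DM(\euX,\Lambda)$.

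For part (2), choose a Nis-loc atlas $p: X \to \euX$ and let $q: Z := \euZ \times_\euX X \to \euZ$ be the induced Nis-loc atlas for $\euZ$. The base-changed morphism $i_X: Z \hookrightarrow X$ is a closed immersion of regular schemes of codimension $c$, since regularity and codimension are preserved under smooth base change. The scheme-level absolute purity recalled at the start of \cref{Section:DM for algebraic stacks} yields $i_X^! 1_X \simeq 1_Z(-c)[-2c]$, and iterating base change up the Čech nerve one obtains compatible such isomorphisms on $Z^i_\euZ \hookrightarrow X^i_\euX$. By descent these glue to the required equivalence $i^! 1_\euX \simeq 1_\euZ(-c)[-2c]$ in $\DM(\euZ,\Lambda)$.

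The main technical point is verifying that the scheme-level purity isomorphisms assemble into a genuine morphism in the limit $\varprojlim_i \DM(X^i_\euX,\Lambda)$. This amounts to checking their naturality with respect to the smooth pullback functors along the face maps of the Čech nerve, which follows from the compatibility of purity with smooth base change in \cite{CiDeg}. Everything else is formal once this compatibility is in hand.
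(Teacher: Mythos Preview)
Your argument is correct and, for the orientation claim and part~(1), it is essentially the same as the paper's: the paper cites \cite{aranha2022localization}[Rem.~1.7] for the orientation and \cite{chowdhury2021motivic}[Thm.~4.4.1] (with $\SH$ replaced by $\DM$) for~(1), and the latter is precisely the \v{C}ech-nerve descent argument you spell out in detail.

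For part~(2) your route differs from the paper's. The paper simply asserts that (2) is a direct consequence of (1) --- presumably via the oriented Thom isomorphism: a closed immersion between regular stacks is a regular immersion with locally free conormal bundle $N_i$, deformation to the normal bundle gives $i^! \simeq i^* \otimes Th(-N_i)$, and the orientation identifies $Th(-N_i)$ with $1(-c)[-2c]$. You instead descend the scheme-level absolute purity isomorphism (item~(10) in the six-functor list) directly along the \v{C}ech nerve, using that regularity and codimension are preserved under smooth base change, so that each $Z^i_{\euZ} \hookrightarrow X^i_{\euX}$ is again a closed immersion of regular schemes of codimension~$c$. Both approaches are valid. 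Yours is more self-contained and avoids invoking deformation to the normal cone at the stack level, at the modest cost of checking compatibility of the absolute purity isomorphisms with the smooth face maps --- which, as you note, is handled in \cite{CiDeg}. The paper's approach is shorter but leans on an implicit argument that is not spelled out.
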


\begin{proof}
For the claim about the orientation we refer to reader to \cite{aranha2022localization}[Rem. 1.7]. The proof of \cite{chowdhury2021motivic}[4.4.1] works in this case with $\SH$ replaced by $\DM$. We note that the separated hypothesis is not needed in loc. cit. because on the level of $B$-schemes of finite type we already have the existence of the exceptional functors. 

 We note that (2) is a direct consequence of (1). 
\end{proof}

\begin{Proposition}
    \label{Proposition:Localization}
(Localization)
Let $\euX$ be a Nis-loc stack. Suppose $j: \euU \hookrightarrow \euX$ is an open immersion with closed complement $ i : \euZ \hookrightarrow \euX$ then we have the following cofibers
\begin{eqnarray*}
     i_! i^! \rightarrow  \id  \rightarrow   j_* j^* \\
    j_! j^! \rightarrow \id \rightarrow i_* i^*. 
\end{eqnarray*}
\end{Proposition}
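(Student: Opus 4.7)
The plan is to reduce this to the scheme case by exploiting the descent definition of $\DM(\euX,\Lambda)$ and the base change formulas already established in \cref{Proposition:Existence_of_exceptional_functors}. On the level of $B$-schemes of finite type, the two cofiber sequences are part of the six functor formalism recorded at the start of the section. The maps themselves are the (co)units of the adjunctions $(i^*, i_*)$, $(i_!, i^!)$, $(j^*, j_*)$ and $(j_!, j^!)$ provided by \cref{Proposition:4_functors_for_DM} and \cref{Proposition:Existence_of_exceptional_functors}, so they exist on $\nislocst$ as soon as the adjunctions do; the content of the proposition is just that the two given triangles are cofiber sequences.

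First I would pick a schematic Nis-loc atlas $\pi\colon X \to \euX$. Pulling back along $j$ and $i$, which are representable, yields schematic Nis-loc atlases $\pi_U\colon X_U := X\times_\euX \euU \to \euU$ and $\pi_Z\colon X_Z := X\times_\euX \euZ \to \euZ$; moreover $X_U \hookrightarrow X$ is an open immersion of $B$-schemes with closed complement $X_Z \hookrightarrow X$, and this remains so on every level of the \v{C}ech nerve. By \cref{Theorem:Extension of DM to Nis-loc_stack} and \cref{Definition:DM_for_nis_loc_stacks}, we have
\begin{equation*}
\DM(\euX, \Lambda) \simeq \varprojlim_{[n]\in \Delta} \DM(X^n_\euX,\Lambda),
\end{equation*}
and likewise for $\euU$ and $\euZ$.

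Next I would promote the scheme-level cofiber sequences to a compatible family on the \v{C}ech nerve. Write $i^{(n)}\colon X^n_\euZ \hookrightarrow X^n_\euX$ and $j^{(n)}\colon X^n_\euU \hookrightarrow X^n_\euX$. On each level the scheme-level localization property furnishes the cofiber sequences $i^{(n)}_! (i^{(n)})^! \to \id \to j^{(n)}_* (j^{(n)})^*$ and its dual. By the base change formulas of \cref{Proposition:Existence_of_exceptional_functors}, the transition maps of the cosimplicial diagram $[n] \mapsto \DM(X^n_\euX, \Lambda)$ intertwine the functors $i^{(n)}_! (i^{(n)})^!$, the identity, and $j^{(n)}_* (j^{(n)})^*$, together with the (co)unit natural transformations between them. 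This is exactly the statement that these natural transformations, viewed as morphisms of cosimplicial diagrams of functors, assemble into the natural transformations $i_! i^! \to \id \to j_* j^*$ after taking limits, and analogously for the dual sequence.

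Finally, since limits of stable $\infty$-categories are computed pointwise and fiber/cofiber sequences in such limits are detected level-wise, the cofiber sequences on each $X^n_\euX$ descend to cofiber sequences in $\DM(\euX,\Lambda)$, for every object $\mathcal{F} \in \DM(\euX,\Lambda)$. I expect the main technical nuisance to be the bookkeeping in this last step: one must ensure that the (co)unit natural transformations defined directly from the stack-level adjunctions agree with those obtained by taking limits of the scheme-level (co)units. This follows from the universal property of limits of adjoint pairs, but writing it out cleanly is the only non-formal piece. Once this is checked, both cofiber sequences follow.
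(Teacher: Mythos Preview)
Your proposal is correct and is essentially the same approach as the paper's: the paper simply invokes \cite{chowdhury2021motivic}[Prop.~4.2.1] with $\SH$ replaced by $\DM(-,\Lambda)$, and your descent-along-the-\v{C}ech-nerve argument is precisely what that reference does. The bookkeeping you flag about matching the stack-level (co)units with the limit of the scheme-level ones is exactly the content of that reference, and your reduction via conservativity and exactness of the projections $\DM(\euX,\Lambda)\to\DM(X^n_\euX,\Lambda)$, together with base change (including smooth base change for $j_*$ via \cref{Proposition:Existence_of_motives}), is the right mechanism.
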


\begin{proof}
The same proof as \cite{chowdhury2021motivic}[Prop. 4.2.1] works with $\SH$ replaced by $\DM(-, \Lambda)$. We note that in this case the inclusions $i$ and $j$ are representable thus the proof of \cite{chowdhury2021motivic}[Thm. 3.1.1] applied to $\DM(-, \Lambda)$ will also construct the exceptional functors. 
\end{proof}

\begin{Proposition}
    \label{Proposition:homotopy_invariance}
(Homotopy invariance)
For any Nis-loc stack $\euX$, the projection $\pi: \mathbf{A}^1_\euX \rightarrow \euX$ induces a fully-faithful functor $\pi^* : \DM(\euX, \Lambda) \rightarrow \DM(\mathbf{A}^1_\euX, \Lambda)$.
\end{Proposition}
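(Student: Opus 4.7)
The plan is to reduce to the already-established homotopy invariance for $\DM(-,\Lambda)$ on schemes by Nis-loc descent, following the strategy Chowdhury uses for $\SH$ in \cite{chowdhury2021motivic}[Prop. 4.3.1]. Homotopy invariance on the level of finite type $B$-schemes is part of the six functor formalism listed at the beginning of this section (and is a theorem of \cite{Spitzweck} and \cite{integralmixedmotives}), so the content of the proposition is really that this property descends along Nis-loc atlases.

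First I would pick a schematic Nis-loc atlas $p : X \to \euX$ of $\euX$. Since Nis-loc coverings are stable under base change, the induced morphism $p_{\mathbf{A}^1} : \mathbf{A}^1_X \to \mathbf{A}^1_\euX$ is again a schematic Nis-loc atlas of $\mathbf{A}^1_\euX$, and its \v{C}ech nerve is the termwise product of the \v{C}ech nerve of $p$ with $\mathbf{A}^1$. By our standing affine diagonal hypothesis each fiber product $X^n_\euX := X\times_\euX\cdots\times_\euX X$ is a genuine scheme, so each $\mathbf{A}^1_{X^n_\euX}$ is a scheme as well.

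Next I would invoke \cref{Theorem:Extension of DM to Nis-loc_stack} to identify $\DM(\euX,\Lambda)$ and $\DM(\mathbf{A}^1_\euX,\Lambda)$ with the totalizations of the cosimplicial diagrams obtained by applying $\DM(-,\Lambda)$ to the \v{C}ech nerves of $p$ and $p_{\mathbf{A}^1}$. Functoriality of $\DM_{\ext}$ in morphisms of Nis-loc stacks identifies $\pi^*$ with the limit of the termwise pullbacks $\pi_n^* : \DM(X^n_\euX, \Lambda) \to \DM(\mathbf{A}^1_{X^n_\euX}, \Lambda)$. By homotopy invariance on schemes each $\pi_n^*$ is fully faithful, i.e.\ the unit $\id \to (\pi_n)_* \pi_n^*$ is an equivalence. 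A limit of such equivalences in $\calg(\PrL_{\stb,\Lambda})$ is again an equivalence, so the unit $\id \to \pi_* \pi^*$ is an equivalence and $\pi^*$ is fully faithful.

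The main technical point, and the step most easily glossed, is checking that $\pi^*$ really does assemble from the $\pi_n^*$ in this cosimplicial manner, i.e.\ that the square relating the limit descriptions of $\DM(\euX,\Lambda)$ and $\DM(\mathbf{A}^1_\euX,\Lambda)$ to the termwise $\pi_n^*$ commutes coherently. This is a direct consequence of the naturality of the Nisnevich sheaf structure on $\DM(-,\Lambda)$ together with the base change identification of the \v{C}ech nerve of $p_{\mathbf{A}^1}$, and is exactly the verification carried out in \cite{chowdhury2021motivic}[Prop. 4.3.1] with $\SH$ in place of $\DM$; the argument transports verbatim.
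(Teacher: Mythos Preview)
Your proposal is correct and follows essentially the same approach as the paper: the paper's proof is the one-liner ``The same proof of \cite{chowdhury2021motivic}[Prop. 4.2.2] works with $\SH$ replaced by $\DM$,'' and you have spelled out exactly what that replacement amounts to. The only discrepancy is the reference number (you cite Prop.~4.3.1 rather than 4.2.2), which may reflect a different version of \cite{chowdhury2021motivic}.
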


\begin{proof}
The same proof of \cite{chowdhury2021motivic}[Prop. 4.2.2] works with $\SH$ replaced by $\DM$. 
\end{proof}

\begin{Corollary}
\label{Corollary:Recollement for DM}
(Recollement) Suppose we have a diagram in $\nislocst$
    \begin{center}
        $ \euU \overset{j}{\hookrightarrow} \euX \overset{i}{\hookleftarrow} \euZ:= \euX - \euU$
    \end{center}
where $j$ is an open immersion and $i$ is a closed immersion. Then for $\DM (-, \Lambda)$ the following conditions are satisfied:
\begin{enumerate}
    \item The functor $i_* \simeq i_!$ admits a left adjoint $i^*$  and a right adjoint $i^!$.
    \item The functor $j^* \simeq j^!$ admits a right adjoint $j_*$ and a left adjoint $j_!$. 
    \item There is an equivalence $j^*i_* \simeq 0$. 
    \item We have the following localization triangles
    \begin{eqnarray*}
     i_! i^! \rightarrow  \id  \rightarrow   j_* j^* \\
    j_! j^! \rightarrow \id \rightarrow i_* i^*. 
    \end{eqnarray*}
    \item The functors $i_*, j_*$ and $j_!$ are all full embeddings.
\end{enumerate}
\end{Corollary}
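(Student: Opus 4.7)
The plan is to deduce the recollement from the six-functor-formalism results already established in \cref{Proposition:4_functors_for_DM}--\cref{Proposition:Localization}. Items (1)--(4) will be formal consequences, while item (5) will require an additional base-change input along the diagonal.

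For (1), the closed immersion $i$ is proper and representable, so \cref{Proposition:Existence_of_exceptional_functors}(3) identifies $i_! \simeq i_*$; the left adjoint $i^*$ then comes from the $4$-functor formalism of \cref{Proposition:4_functors_for_DM}, and the right adjoint $i^!$ from \cref{Proposition:Existence_of_exceptional_functors}. For (2), the open immersion $j$ is smooth representable of relative dimension $0$, so \cref{Proposition:purity}(1) gives $j^! \simeq j^*$, the left adjoint $j_! \simeq j_{\#}$ is provided by \cref{Proposition:Existence_of_motives}, and $j_*$ by \cref{Proposition:4_functors_for_DM}. Item (4) is precisely \cref{Proposition:Localization}. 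For (3), I would apply the base change formula of \cref{Proposition:Existence_of_exceptional_functors}(2) to the cartesian square obtained by pulling $i$ back along $j$, whose upper-left corner is $\euU \times_{\euX} \euZ \simeq \emptyset$; since $\DM(\emptyset, \Lambda) \simeq 0$, this forces $j^* i_! \simeq 0$, and hence $j^* i_* \simeq 0$ by (1).

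For (5), the key observation is that both $i$ and $j$ are monomorphisms in $\nislocst$, so the fiber products $\euZ \times_{\euX} \euZ$ and $\euU \times_{\euX} \euU$ reduce to $\euZ$ and $\euU$ respectively, with both projection morphisms equal to the identity. Applying the base change equivalences of \cref{Proposition:Existence_of_exceptional_functors}(2) to these trivial cartesian squares will yield canonical equivalences
\begin{equation*}
i^* i_! \simeq \id_{\DM(\euZ, \Lambda)}, \qquad j^* j_! \simeq \id_{\DM(\euU, \Lambda)}, \qquad j^! j_* \simeq \id_{\DM(\euU, \Lambda)}.
\end{equation*}
Combined with $i_! \simeq i_*$ from (1) and $j^! \simeq j^*$ from (2), these express that the relevant counits/units of the respective adjunctions are equivalences, which gives fully faithfulness of $i_*$, $j_!$ and $j_*$.

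The main obstacle I anticipate is not conceptual but one of coherence: one must check that the equivalences produced by base change match the categorical unit/counit transformations of the adjunctions rather than some other natural isomorphism. This compatibility is part of the standard packaging of the six-functor formalism in the motivic setting, and the argument should go through verbatim from the $\SH$-version established in \cite{chowdhury2021motivic} with $\SH$ replaced by $\DM(-, \Lambda)$.
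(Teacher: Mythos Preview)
Your proposal is correct and follows essentially the same approach as the paper: items (1), (2) are deduced from the existence of exceptional functors, the $4$-functor formalism, and purity; item (4) is \cref{Proposition:Localization}; and items (3), (5) both come from the base change equivalence in \cref{Proposition:Existence_of_exceptional_functors} applied to the relevant cartesian squares. The paper's proof is terser than yours (it simply cites the propositions without spelling out the diagonal-square argument for (5) or the empty-fiber argument for (3)), but the logical content is the same.
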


\begin{proof}
 Both $(1)$ and $(2)$ follow \cref{Proposition:Existence_of_exceptional_functors}, \cref{Proposition:4_functors_for_DM} and \cref{Proposition:purity}. Claims $(3)$ and $(5)$ follow from the base change equivalence in \cref{Proposition:Existence_of_exceptional_functors}. Finally $(4)$ is \cref{Proposition:Localization}. 
\end{proof}

\section{Descent results}
\label{Section:Descent Results}

In this section we discuss $cdh$-descent and Nisnevich descent for $\DM(-, \Lambda)$. Khan in \cite{Khansix} has shown that $\DM(-, \Lambda)$ when restricted to algebraic spaces satisfies $cdh$-descent. What we say in this section follows easily from Khan's  work \cite{Khansix} but we will review the arguments here for completeness. For our purposes it will not be necessary to consider $cdh$-squares and Nisnevich squares for arbitrary morphisms of stacks, we will only need to consider representable $cdh$ and Nisnevich squares.

Recall from \cite{HoyoisSix} that the \emph{constructible topology} on $\text{AlgSp}_B$  is the coarsest topology such that 
\begin{enumerate}
    \item The empty sieve covers the empty algebraic space.
    \item If $Z \hookrightarrow X$ is a closed immersion with open complement $ U \hookrightarrow X$, $\{ U \hookrightarrow X, Z \hookrightarrow X\} $ generates a covering sieve. 
\end{enumerate}

\begin{Lemma}
\label{Lemma:Constructible_topology_is_conservative}
Let $\{ f_i : U_i \rightarrow S\} $  be a constructible cover of $S$ in $\text{AlgSp}_B$. Then the family of functors $\{ f_i^* :\DM(S, \Lambda) \rightarrow \DM(U_i, \Lambda) \}$ is conservative 
\end{Lemma}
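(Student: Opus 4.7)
The plan is to proceed by noetherian induction on the underlying topological space of $S$, reducing to the elementary case of a single closed--open decomposition, for which conservativity is immediate from the localization triangle.

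First, I would unpack what ``constructible cover'' means in view of the definition above. Since the constructible topology on $\text{AlgSp}_B$ is generated by the empty sieve on the empty algebraic space together with the two-element families $\{j: U \hookrightarrow X, i: Z \hookrightarrow X\}$ arising from open immersions and their complementary closed immersions, any constructible cover $\{f_i : U_i \to S\}$ admits a refinement by a finite stratification $S = \bigsqcup_k S_k$ into locally closed subspaces, each of which factors through some $f_i$. Since pullback is functorial, if $f_i^* F \simeq 0$ for every $i$, then $s_k^* F \simeq 0$ for every stratum $s_k : S_k \hookrightarrow S$ in this refinement. It therefore suffices to establish the lemma for covers by locally closed stratifications.

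For the elementary case of a single decomposition $\{j: V \hookrightarrow S,\, i: Z \hookrightarrow S\}$ with $j$ an open immersion and $i$ its complementary closed immersion, the localization triangle from \cref{Proposition:Localization}, namely
\[ j_! j^! F \longrightarrow F \longrightarrow i_* i^* F, \]
together with the equivalence $j^! \simeq j^*$ (which holds since $j$ is smooth of relative dimension zero, by \cref{Proposition:purity}), gives $F \simeq 0$ as soon as both $j^* F \simeq 0$ and $i^* F \simeq 0$.

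For the general case I would use noetherian induction on $|S|$ (noetherianity being guaranteed because everything is of finite type over $B$). If $S = \emptyset$ there is nothing to show. Otherwise, using noetherianity of $|S|$, extract from the stratification a non-empty open stratum $V \subseteq S$ with closed complement $i : Z \hookrightarrow S$ satisfying $|Z| \subsetneq |S|$; then $j^* F \simeq 0$ directly from the hypothesis, while the stratification restricts to a constructible stratification of $Z$ to which the inductive hypothesis applies, giving $i^* F \simeq 0$. The elementary case then yields $F \simeq 0$. The main obstacle will be the bookkeeping to pass from an abstract covering sieve in the constructible topology to an explicit locally closed stratification refining it, but this is a finite combinatorial exercise on the noetherian space $|S|$ using that the topology is generated by the two elementary types of covers.
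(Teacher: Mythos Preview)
Your argument is correct and is precisely an unpacking of what the paper asserts: the paper's proof is the single sentence ``This follows directly from \cref{Proposition:Localization},'' and you have correctly spelled out the noetherian induction and reduction to a single open--closed pair that this sentence encodes. There is nothing substantively different in your approach.
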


\begin{proof}
This follows directly from \cref{Proposition:Localization}. 
\end{proof}

The following is \cite{Khansix}[Thm. 2.51]

\begin{Proposition}
\label{Proposition:cdh_descent_for_schemes}
Suppose that cartesian square
 \begin{eqnarray*}
	\begin{tikzpicture}
		\node (TL) at (0 , 1.2) {$T$};
		\node (TR) at (1.2, 1.2) {$Y$};
		\node (BL) at (0, 0){$Z$};
        \node (BR) at (1.2,0){$X$};
		
		\draw[->] (TL) -- (TR) node [midway, above] {$k$};
		\draw[->] (TL) -- (BL)node [midway, left] {$g$};
		\draw[->] (TR) -- (BR)node [midway, right] {$f$};
        \draw[->] (BL) -- (BR) node [midway, below] {$i$};
	\end{tikzpicture}
 &  & 
 \begin{tikzpicture}
		\node (TL) at (0 , 1.2) {$T$};
		\node (TR) at (1.2, 1.2) {$Y$};
		\node (BL) at (0, 0){$U$};
        \node (BR) at (1.2,0){$X$};
		
		\draw[->] (TL) -- (TR) node [midway, above] {$k$};
		\draw[->] (TL) -- (BL)node [midway, left] {$(\text{resp.      } g$};
		\draw[->] (TR) -- (BR)node [midway, right] {$f\ \ )$};
        \draw[->] (BL) -- (BR) node [midway, below] {$j$};
	\end{tikzpicture}
\end{eqnarray*}
in $\text{AlgSp}_B$ is a $cdh$-square (resp. Nisnevich square). Then we have a canonical equivalence 
\begin{center}
    $ \DM(X, \Lambda) \simeq \DM(Z, \Lambda) \times_{\DM(T, \Lambda) }\DM(Z, \Lambda)$
\end{center}
(resp. 
\begin{center}
 $ \DM(X, \Lambda) \simeq \DM(U, \Lambda) \times_{\DM(T, \Lambda) }\DM(Y, \Lambda)$)
\end{center}
of $\infty$-categories.
\end{Proposition}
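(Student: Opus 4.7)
The plan is to reduce both equivalences to the corresponding descent properties for $\SH$ and then transfer them along the equivalence $\DM(-,\Lambda) \simeq \Mod_{H\Lambda}(\SH(-))$ furnished by \cref{Theorem:Mod_agrees_with_Kan_extension}.

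First I would observe that the pullbacks $(i^*, f^*)$ (resp.\ $(j^*, f^*)$) assemble into a canonical comparison functor
\[ \Phi : \DM(X, \Lambda) \to \DM(Z, \Lambda) \times_{\DM(T, \Lambda)} \DM(Y, \Lambda), \]
with the obvious modification in the Nisnevich case, and the goal is to show $\Phi$ is an equivalence. By Khan \cite{Khansix}, $\SH$ satisfies both $cdh$- and Nisnevich-descent on algebraic spaces, so the analogous statement holds with $\SH$ in place of $\DM$. Now \cref{Proposition:Mod_commutes_with_limits}, applied to the limit diagram expressing $\SH(X)$ as the pullback of the square, gives an equivalence
\[ \Mod(\SH(X)) \simeq \Mod(\SH(Z)) \times_{\Mod(\SH(T))} \Mod(\SH(Y)) \]
(and analogously in the Nisnevich case). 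The canonical motivic Eilenberg--MacLane spectrum $H\Lambda_X$ restricts compatibly along all the maps of the square (\cref{Lemma:Extension_of_motivic_cohomology_to_stacks}), so taking the fiber of this equivalence over $H\Lambda_X$ on the left and over the corresponding compatible system on the right yields the claimed equivalence
\[ \DM(X, \Lambda) \simeq \DM(Z, \Lambda) \times_{\DM(T, \Lambda)} \DM(Y, \Lambda), \]
again using that taking modules over a fixed algebra commutes with the relevant fiber product.

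The only non-formal input is $cdh$-descent for $\SH$ on algebraic spaces, which is established in \cite{Khansix}; granting this, the remainder of the argument is purely categorical and the main obstacle is merely bookkeeping to identify the correct compatible family of $H\Lambda$-modules when extracting the fiber. As a consistency check, \cref{Lemma:Constructible_topology_is_conservative} guarantees that the family $\{i^*, j^*\}$ is conservative on $\DM(X, \Lambda)$; one could alternatively give a direct proof of fully faithfulness and essential surjectivity of $\Phi$ by combining this conservativity with the localization triangle (\cref{Proposition:Localization}) and proper base change ($i_! \simeq i_*$ and $k^* f_* \simeq g_* k^*$ from \cref{Proposition:Existence_of_exceptional_functors}) to reconstruct objects of $\DM(X, \Lambda)$ from compatible pairs on $Z$ (or $U$) and $Y$.
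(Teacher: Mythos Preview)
Your argument is correct, but it takes a genuinely different route from the paper's. The paper proves the descent statement directly for $\DM$: following \cite{HoyoisSix}[Prop.~6.24] it invokes the criterion of \cite{DAGVII}, reducing to (1) conservativity of the pair $(i^*,f^*)$, which is \cref{Lemma:Constructible_topology_is_conservative}, and (2) the check that the canonical projections $i^*\mathcal{F}_X \to \mathcal{F}_Z$ and $f^*\mathcal{F}_X \to \mathcal{F}_Y$ are equivalences for $\mathcal{F}_X$ the fiber product, which follows from proper base change and the fully faithfulness of $i_*$ (and smooth base change for the Nisnevich case). This is essentially the ``alternative'' you sketch at the end.

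Your primary route instead outsources the geometric content to the known $cdh$/Nisnevich descent for $\SH$ and then transports it along $\DM \simeq \Mod_{H\Lambda}(\SH)$ via \cref{Proposition:Mod_commutes_with_limits} and the fiber-over-$H\Lambda$ trick, exactly parallel to the proof of \cref{Theorem:Mod_agrees_with_Kan_extension}. This is clean and modular: it makes transparent that any descent property of $\SH$ automatically propagates to $\DM$, and avoids rechecking base change identities in $\DM$ itself. The paper's direct argument, on the other hand, is self-contained within the $\DM$ formalism already set up in \cref{Section:DM for algebraic stacks} and does not appeal to the $\Mod_{H\Lambda}$ description; it also makes explicit the role of the constructible-topology conservativity, which the paper needs anyway for the later stack-level statements.
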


\begin{proof}
The proof is the same as the proof \cite{HoyoisSix}[Prop. 6.24]. We prove the $cdh$-statement the Nisnevich result is analogous. That is, by \cite{DAGVII} it is enough to show 
\begin{enumerate}
    \item  The pair $(i^*, f^*)$ is conservative.
    \item Given $ \mathcal{F}_Z \in \DM(Z, \Lambda), \mathcal{F}_Y \in \DM(Y, \Lambda), \mathcal{F}_T \in \DM(T, \Lambda)$ and $ g^* (\mathcal{F}_Z  )\simeq \mathcal{F}_T \simeq k^*(\mathcal{F}_Y)$, if 
    \begin{center}
        $\mathcal{F}_X = i_* \mathcal{F}_Z \times_{(fk)_* \mathcal{F}_T} f_* \mathcal{F}_Y,$
    \end{center}
    then the maps 
    \begin{center}
        $i^* \mathcal{F}_X \rightarrow \mathcal{F}_Z$ and $f^* \mathcal{F}_X \rightarrow \mathcal{F}_Y$
    \end{center}
    induced by the canonical projections are equivalences. 
\end{enumerate}

Part $(1)$ follows directly from \cref{Lemma:Constructible_topology_is_conservative}. Part $(2)$ follows from first noting that proper base change \cref{Proposition:Existence_of_exceptional_functors} and the fact that $i_*$ is fully faithful implies that $ i^*\mathcal{F}_X \rightarrow \mathcal{F}_Z$ is an equivalence. To see the later equivalence we note that it follows via smooth base change \cref{Proposition:Existence_of_motives}. 
\end{proof}

The next definitions are natural generalizations of the notions of $cdh$-square and Nisnevich square to $\nislocst$. 

\begin{Definition}
\label{Definition:cdh_square}
    A cartesian diagram of algebraic stacks in $\nislocst$  
    \begin{center}
	\begin{tikzpicture}
		\node (TL) at (0 , 1.2) {$\euT$};
		\node (TR) at (1.2, 1.2) {$\euY$};
		\node (BL) at (0, 0){$\euZ$};
        \node (BR) at (1.2,0){$\euX$};
		
		\draw[->] (TL) -- (TR) node [midway, above] {$k$};
		\draw[->] (TL) -- (BL)node [midway, left] {$g$};
		\draw[->] (TR) -- (BR)node [midway, right] {$f$};
        \draw[->] (BL) -- (BR) node [midway, below] {$i$};
	\end{tikzpicture}
\end{center}
is called a representable $cdh$-square if:
\begin{enumerate}
\item The morphism $f$ is representable proper and surjective .
\item The morphism $i$ is a closed immersion.
\item The restriction of $f$ to $\euX-\euZ$ is an isomorphism. 
\end{enumerate}
\end{Definition}

\begin{Definition}
\label{Definition:Nis_sqaure}
We say that a cartesian diagram of algebraic stacks in $\nislocst$ 
\begin{center}
	\begin{tikzpicture}
		\node (TL) at (0 , 1.2) {$\euT$};
		\node (TR) at (1.2, 1.2) {$\euY$};
		\node (BL) at (0, 0){$\euU$};
        \node (BR) at (1.2,0){$\euX$};
		
		\draw[->] (TL) -- (TR) node [midway, above] {$k$};
		\draw[->] (TL) -- (BL)node [midway, left] {$g$};
		\draw[->] (TR) -- (BR)node [midway, right] {$f$};
        \draw[->] (BL) -- (BR) node [midway, below] {$j$};
	\end{tikzpicture}
\end{center}
is called a representable Nisnevich square if:
\begin{enumerate}
\item The morphism $f$ is representable \'etale morphism.
\item The morphism $j$ is an open immersion.
\item The restriction of $f$ to $(\euX-\euU)_{\text{red}}$ is an isomorphism. 
\end{enumerate}
\end{Definition}

\begin{Proposition}
\label{Propostion:cdh_descent}
Given a $cdh$-square (resp. Nisnevich square)
\begin{eqnarray*}
	\begin{tikzpicture}
		\node (TL) at (0 , 1.2) {$\euT$};
		\node (TR) at (1.2, 1.2) {$\euY$};
		\node (BL) at (0, 0){$\euZ$};
        \node (BR) at (1.2,0){$\euX$};
		
		\draw[->] (TL) -- (TR) node [midway, above] {$k$};
		\draw[->] (TL) -- (BL)node [midway, left] {$g$};
		\draw[->] (TR) -- (BR)node [midway, right] {$f$};
        \draw[->] (BL) -- (BR) node [midway, below] {$i$};
	\end{tikzpicture}
 &  & 
 \begin{tikzpicture}
		\node (TL) at (0 , 1.2) {$\euT$};
		\node (TR) at (1.2, 1.2) {$\euY$};
		\node (BL) at (0, 0){$\euU$};
        \node (BR) at (1.2,0){$\euX$};
		
		\draw[->] (TL) -- (TR) node [midway, above] {$k$};
		\draw[->] (TL) -- (BL)node [midway, left] {$(\text{resp.      } g$};
		\draw[->] (TR) -- (BR)node [midway, right] {$f\ \ )$};
        \draw[->] (BL) -- (BR) node [midway, below] {$j$};
	\end{tikzpicture}
\end{eqnarray*}
in $\nislocst$. There is a canonical equivalence
\begin{center}
    $ \DM(\euX, \Lambda) \simeq \DM(\euZ, \Lambda) \times_{\DM(\euT, \Lambda) }\DM(\euY, \Lambda)$
\end{center}
(resp. 
\begin{center}
 $ \DM(\euX, \Lambda) \simeq \DM(\euU, \Lambda) \times_{\DM(\euT, \Lambda) }\DM(\euY, \Lambda)$)
\end{center}
of $\infty$-categories.
\end{Proposition}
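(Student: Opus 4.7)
The plan is to reduce to the algebraic-space case, \cref{Proposition:cdh_descent_for_schemes}, by resolving along a Nis-loc atlas on $\euX$; I describe the $cdh$ case, the Nisnevich case being identical.

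First I would pick a schematic Nis-loc atlas $\pi: X \to \euX$, whose existence is guaranteed by the Nis-loc hypothesis, and write $X^\bullet_\euX$ for its \v Cech nerve. Representability of $f$, $i$, $g$, $k$ guarantees that for each $n$, the fiber products $Z \times_\euX X^n_\euX$, $Y \times_\euX X^n_\euX$, and $T \times_\euX X^n_\euX$ are algebraic spaces, and that the base change of $\pi$ along $\euZ \to \euX$, $\euY \to \euX$, $\euT \to \euX$ gives a Nis-loc atlas on each of these stacks whose \v Cech nerve identifies with the simplicial object $Z \times_\euX X^\bullet_\euX$, etc. (admitting Nisnevich-local sections is stable under base change).

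Next, for each $n \geq 0$, pulling the given square back along $X^n_\euX \to \euX$ yields
\[
\begin{CD}
T \times_\euX X^n_\euX @>>> Y \times_\euX X^n_\euX \\
@VVV @VVV \\
Z \times_\euX X^n_\euX @>>> X^n_\euX,
\end{CD}
\]
which is itself a $cdh$-square of algebraic spaces: properness and surjectivity of $f$ and closed-immersion-ness of $i$ are preserved, and the isomorphism over the open complement of $\euZ$ pulls back to an isomorphism over the complement of $Z \times_\euX X^n_\euX$. Thus \cref{Proposition:cdh_descent_for_schemes} applies levelwise to yield an equivalence
\[
\DM(X^n_\euX, \Lambda) \simeq \DM(Z \times_\euX X^n_\euX, \Lambda) \times_{\DM(T \times_\euX X^n_\euX, \Lambda)} \DM(Y \times_\euX X^n_\euX, \Lambda),
\]
natural in $[n] \in \Delta$.

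Finally I would take the limit over $\Delta$. By \cref{Theorem:Extension of DM to Nis-loc_stack}, the left-hand side assembles to $\DM(\euX, \Lambda)$, and since fiber products of $\infty$-categories commute with the cosimplicial limit, the right-hand side assembles, after three further applications of the same theorem, to $\DM(\euZ, \Lambda) \times_{\DM(\euT, \Lambda)} \DM(\euY, \Lambda)$. The main obstacle, while largely formal, is the bookkeeping step of verifying that the pullback of $\pi$ along each representable morphism in the square is still a Nis-loc atlas and that its \v Cech nerve identifies with the expected iterated fiber product; once this is established the proposition reduces cleanly to the scheme-level descent result combined with the commutation of limits.
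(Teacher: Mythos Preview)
Your proposal is correct and follows essentially the same approach as the paper's own proof: pick a Nis-loc atlas on $\euX$, observe that the pulled-back squares at each level of the \v Cech nerve are $cdh$-squares of algebraic spaces so that \cref{Proposition:cdh_descent_for_schemes} applies levelwise, and then pass to the limit using \cref{Theorem:Extension of DM to Nis-loc_stack} together with the fact that limits commute with fiber products. Your write-up is in fact slightly more careful than the paper's about why the base-changed atlases on $\euZ$, $\euY$, $\euT$ remain Nis-loc.
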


\begin{proof}
We will only prove the $cdh$-case, the Nisnevich case is entirely analogous. Let $ \pi: X \rightarrow \euX$ be a Nis-loc atlas of $\euX$. Then by \cref{Theorem:Extension of DM to Nis-loc_stack} there is an equivalence
\begin{center}
    $\DM(\euX, \Lambda) \simeq \varprojlim_{n \in \Delta} \DM(X_n, \Lambda).$
\end{center}
But now for each $ n \in \Delta$ the induced square
\begin{center}
	\begin{tikzpicture}
		\node (TL) at (0 , 1.2) {$T_n$};
		\node (TR) at (1.2, 1.2) {$Y_n$};
		\node (BL) at (0, 0){$Z_n$};
        \node (BR) at (1.2,0){$X_n$};
		
		\draw[->] (TL) -- (TR) node [midway, above] {$k_n$};
		\draw[->] (TL) -- (BL)node [midway, left] {$g_n$};
		\draw[->] (TR) -- (BR)node [midway, right] {$f_n$};
        \draw[->] (BL) -- (BR) node [midway, below] {$i_n$};
	\end{tikzpicture}
\end{center}
is a $cdh$-square of algebraic spaces (resp. Nisnevich square). Thus by \cref{Proposition:cdh_descent_for_schemes} there is a canonical equivalence
\begin{center}
   $ \DM(X_n, \Lambda) \simeq \DM(Z_n, \Lambda) \times_{\DM(T_n, \Lambda)} \DM(Y_n, \Lambda). $
\end{center}
We can then rewrite $\DM(\euX, \Lambda)$ as
\begin{center}
   $ \DM(\euX, \Lambda) \simeq \varprojlim_{n \in \Delta} \DM(Z_n, \Lambda) \times_{\DM(T_n, \Lambda)} \DM(Y_n, \Lambda)$. 
\end{center}
But now since limits commute with fiber products we are done.
\end{proof}

Given $cdh$-square as in \cref{Definition:cdh_square}, and setting $a \simeq ig \simeq fk$. For any $ \mathcal{M} \in \DM(\euX, \Lambda)$ we can form a commutative square

\begin{equation}
\label{Equation:cdh_square}
\begin{tikzpicture}
		\node (TL) at (0 , 1.2) {$\mathcal{M}$};
		\node (TR) at (2, 1.2) {$f_* f^* (\mathcal{M})$};
		\node (BL) at (0, 0){$i_* i^*(\mathcal{M})$};
        \node (BR) at (2,0){$a_* a^*( \mathcal{M})$};
		
		\draw[->] (TL) -- (TR);
		\draw[->] (TL) -- (BL);
		\draw[->] (TR) -- (BR);
        \draw[->] (BL) -- (BR);
	\end{tikzpicture}
\end{equation}
by considering the various unit and counit natural transformation associated to the adjunctions involved. 

\begin{Corollary}
The square in \cref{Equation:cdh_square} is cartesian. 
\end{Corollary}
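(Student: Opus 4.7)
The plan is to identify both vertical fibres of the square in \cref{Equation:cdh_square} with $j_!j^*\mathcal{M}$, where $j : \euU := \euX \setminus \euZ \hookrightarrow \euX$ is the open complement of the closed immersion $i$. Since the two columns together form a natural map of cofibre sequences sharing the common terminal vertex $a_*a^*\mathcal{M}$, a natural equivalence of the fibres forces the square to be cartesian. The left column's fibre is $j_!j^*\mathcal{M}$ immediately by the localization recollement \cref{Proposition:Localization} applied to the pair $(i,j)$ on $\euX$.

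For the right column I would pass through localization on $\euY$. Writing $\euU' := \euY \setminus \euT$, with $j' : \euU' \hookrightarrow \euY$ and $k : \euT \hookrightarrow \euY$ the complementary immersions, applying $f_*$ to the cofibre sequence $j'_!\,j'^*f^*\mathcal{M} \to f^*\mathcal{M} \to k_*k^*f^*\mathcal{M}$ and using $a_*a^* \simeq f_*k_*k^*f^*$ (from $a = fk$) identifies the right column's fibre as $f_*\,j'_!\,j'^*f^*\mathcal{M}$. Because $f$ is representable and proper by the definition of a cdh-square, \cref{Proposition:Existence_of_exceptional_functors}(3) gives $f_* \simeq f_!$, so
\[
f_*\, j'_!\, j'^* f^* \mathcal{M} \;\simeq\; (fj')_!\, (fj')^*\mathcal{M} \;\simeq\; (jf')_!\, (jf')^*\mathcal{M} \;\simeq\; j_!\, f'_! f'^*\, j^*\mathcal{M},
\]
where $f' : \euU' \to \euU$ is the restriction of $f$ to the open loci. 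By \cref{Definition:cdh_square} this $f'$ is an isomorphism, so $f'_!f'^* \simeq \id$ and the fibre collapses naturally to $j_!j^*\mathcal{M}$.

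Assembling these identifications produces a map of cofibre sequences $(j_!j^*\mathcal{M}\to \mathcal{M} \to i_*i^*\mathcal{M}) \to (j_!j^*\mathcal{M} \to f_*f^*\mathcal{M} \to a_*a^*\mathcal{M})$ whose leftmost vertical arrow is an equivalence, which exhibits the middle square as cartesian. The step I expect to require the most care is verifying that the equivalence of fibres induced by the map of cofibre sequences agrees, under the above identifications, with the identity on $j_!j^*\mathcal{M}$; this is a bookkeeping check that the units of the $(i^*,i_*)$, $(f^*,f_*)$ and $(a^*,a_*)$ adjunctions are compatible with the proper pushforward comparison $f_! \simeq f_*$ and with $(fj')_! \simeq (jf')_!$, and is where the compatibility of adjunction units needs to be traced carefully through the six-functor formalism of \cref{Proposition:Existence_of_exceptional_functors}.
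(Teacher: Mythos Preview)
Your argument is correct, but it takes a different route than the paper. In the paper the corollary is stated without proof, as an immediate consequence of \cref{Propostion:cdh_descent}: once one knows the equivalence $\DM(\euX,\Lambda)\simeq \DM(\euZ,\Lambda)\times_{\DM(\euT,\Lambda)}\DM(\euY,\Lambda)$, the map $\mathcal{M}\to i_*i^*\mathcal{M}\times_{a_*a^*\mathcal{M}}f_*f^*\mathcal{M}$ becomes an equivalence after applying $i^*$ and $f^*$ (this is exactly condition~(2) in the proof of \cref{Proposition:cdh_descent_for_schemes}), and then conservativity of $(i^*,f^*)$ finishes the job. So the paper's implicit argument is a one-line deduction from categorical descent.

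Your approach instead bypasses the descent statement entirely and works directly with localization triangles on $\euX$ and on $\euY$, together with $f_*\simeq f_!$ and the fact that $f$ restricts to an isomorphism over $\euU$. This is more elementary in the sense that it only uses the recollement package and proper base change, not the Lurie--Hoyois descent criterion; the price is the compatibility check you flag at the end. That check is routine: by adjunction the induced map on fibres corresponds to the base-change map $f^*j_!j^*\mathcal{M}\to j'_!j'^*f^*\mathcal{M}$, which under $f^*j_!\simeq j'_!f'^*$ and $j'^*f^*\simeq f'^*j^*$ becomes the identity on $j'_!f'^*j^*\mathcal{M}$. One small slip: the two columns do \emph{not} share the terminal vertex $a_*a^*\mathcal{M}$; rather, the square gives a map of arrows (hence of cofibre sequences), which is what you actually use.
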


We will also be interested in the case of a Nisnevich square as in \cref{Definition:Nis_sqaure}. That is, again setting $ a \simeq jg \simeq fk$, for any $ \mathcal{M} \in \DM(\euX, \Lambda)$ we can form the commutative square 

\begin{equation}
\label{Equation:nis_square}
\begin{tikzpicture}
		\node (TL) at (0 , 1.2) {$\mathcal{M}$};
		\node (TR) at (2, 1.2) {$f_* f^* (\mathcal{M})$};
		\node (BL) at (0, 0){$j_* j^*(\mathcal{M})$};
        \node (BR) at (2,0){$a_* a^*( \mathcal{M}).$};
		
		\draw[->] (TL) -- (TR);
		\draw[->] (TL) -- (BL);
		\draw[->] (TR) -- (BR);
        \draw[->] (BL) -- (BR);
	\end{tikzpicture}
\end{equation}
Applying the functor $\underline{\map}_{\DM(\euX, \Lambda)} (- , 1)$ we also get a square
\begin{equation}
\label{Equation:exceptional_nis_square}
\begin{tikzpicture}
		\node (TL) at (0 , 1.2) {$ a_\# a^*( \mathcal{M})$};
		\node (TR) at (2, 1.2) {$f_\# f^* (\mathcal{M})$};
		\node (BL) at (0, 0){$j_\# j^*(\mathcal{M})$};
        \node (BR) at (2,0){$\mathcal{M}.$};
		
		\draw[->] (TL) -- (TR);
		\draw[->] (TL) -- (BL);
		\draw[->] (TR) -- (BR);
        \draw[->] (BL) -- (BR);
	\end{tikzpicture}
\end{equation}

\begin{Corollary}
\label{Corollary:Mayer-Vietoris_squares}
The square in \cref{Equation:nis_square} is cartesian and hence so is \cref{Equation:exceptional_nis_square}.  
\end{Corollary}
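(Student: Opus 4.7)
The plan is to first establish the cartesianness of \eqref{Equation:nis_square} via Nisnevich descent, and then bootstrap to \eqref{Equation:exceptional_nis_square} using stability of $\DM(\euX, \Lambda)$ together with the exceptional adjunctions.

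For \eqref{Equation:nis_square}, I would invoke the Nisnevich case of \cref{Propostion:cdh_descent}, which yields an equivalence
\[
(j^*, f^*) \colon \DM(\euX, \Lambda) \xrightarrow{\sim} \DM(\euU, \Lambda) \times_{\DM(\euT, \Lambda)} \DM(\euY, \Lambda).
\]
Full faithfulness of this equivalence, combined with the standard formula for mapping spaces in a fiber product of $\infty$-categories, produces for any $\mathcal{M}, \mathcal{N} \in \DM(\euX, \Lambda)$ a natural equivalence
\[
\Map(\mathcal{M}, \mathcal{N}) \simeq \Map(j^*\mathcal{M}, j^*\mathcal{N}) \times_{\Map(a^*\mathcal{M}, a^*\mathcal{N})} \Map(f^*\mathcal{M}, f^*\mathcal{N})
\]
(using $a^* \simeq g^* j^* \simeq k^* f^*$). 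Applying the $(j^*, j_*)$, $(f^*, f_*)$ and $(a^*, a_*)$ adjunctions of \cref{Proposition:4_functors_for_DM} and pulling the fiber product inside the functor $\Map(\mathcal{M}, -)$ rewrites the right-hand side as $\Map(\mathcal{M}, \, j_* j^* \mathcal{N} \times_{a_* a^* \mathcal{N}} f_* f^* \mathcal{N})$. Yoneda then identifies $\mathcal{N}$ with the fiber product, which is exactly the cartesianness of \eqref{Equation:nis_square}.

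For \eqref{Equation:exceptional_nis_square}, I would exploit stability of $\DM(\euX, \Lambda)$: a commutative square in a stable $\infty$-category is cartesian iff it is cocartesian, so it suffices to show that the canonical comparison
\[
\alpha \colon f_\# f^* \mathcal{M} \amalg_{a_\# a^* \mathcal{M}} j_\# j^* \mathcal{M} \to \mathcal{M}
\]
is an equivalence; the needed functors $j_\#, f_\#, a_\#$ all exist by \cref{Proposition:Existence_of_motives} since $j$, $f$ and $a = jg = fk$ are smooth representable. Testing $\alpha$ against $\Map(-, \mathcal{N})$ for arbitrary $\mathcal{N}$ and applying the $(-)_\# \dashv (-)^*$ adjunctions converts the problem into the assertion that
\[
\Map(\mathcal{M}, \mathcal{N}) \to \Map(f^*\mathcal{M}, f^*\mathcal{N}) \times_{\Map(a^*\mathcal{M}, a^*\mathcal{N})} \Map(j^*\mathcal{M}, j^*\mathcal{N})
\]
is an equivalence; a second pass with the $(-)^* \dashv (-)_*$ adjunctions identifies the target with $\Map(\mathcal{M}, \, j_* j^* \mathcal{N} \times_{a_* a^* \mathcal{N}} f_* f^* \mathcal{N})$, which equals $\Map(\mathcal{M}, \mathcal{N})$ by the cartesianness of \eqref{Equation:nis_square} applied to $\mathcal{N}$. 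Hence $\alpha$ is an equivalence by Yoneda.

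The argument is essentially formal once Nisnevich descent is available, with Yoneda and the six-functor adjunctions doing all the work, so I do not anticipate a real conceptual obstacle. The subtlest point is ensuring that the canonical compatibility $g^* j^* \mathcal{M} \simeq k^* f^* \mathcal{M}$ coming from the base change square is the identification implicit in all the fiber products above, but this coherence is built into the equivalence supplied by \cref{Propostion:cdh_descent}.
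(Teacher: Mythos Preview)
Your proposal is correct and essentially aligned with what the paper intends. The paper does not supply a proof of \cref{Corollary:Mayer-Vietoris_squares}; it treats both squares as immediate consequences of the Nisnevich case of \cref{Propostion:cdh_descent}, and your argument for \eqref{Equation:nis_square} via full faithfulness of $(j^*,f^*)$, the mapping-space formula in a fiber product of $\infty$-categories, and the $(-)^* \dashv (-)_*$ adjunctions is exactly the standard way to unpack that implication.

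For \eqref{Equation:exceptional_nis_square} there is a small difference worth noting. The paper introduces that square by the phrase ``applying the functor $\underline{\map}_{\DM(\euX,\Lambda)}(-,1)$'', which suggests deducing it from \eqref{Equation:nis_square} by internal-hom duality. Taken literally, that route uses the identity $\underline{\map}(f_\# A,1) \simeq f_*\,\underline{\map}(A,1)$ (a consequence of the projection formula for $f_\#$) and hence passes through dualizability or at least through replacing $\mathcal{M}$ by $\underline{\map}(\mathcal{M},1)$. Your approach---testing the comparison map against $\Map(-,\mathcal{N})$, using the $(-)_\# \dashv (-)^*$ adjunctions, and reducing to the already-established cartesianness of \eqref{Equation:nis_square} for $\mathcal{N}$---is cleaner and works for arbitrary $\mathcal{M}$ without any dualizability hypothesis. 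Both arguments are short and formal; yours is the more robust one.
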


\cref{Corollary:Mayer-Vietoris_squares} can be used to to show that for geometric motives we get a Mayer-Vietoris cofiber sequence.

\begin{Corollary}
\label{Corollary:Mayer-Vietoris_triangle}
Suppose that we have a Nisnevich square as in \cref{Definition:Nis_sqaure}, where all vertices are smooth and representable over some base Nis-loc stack $\euS$. Then we have the following cofiber sequence
\begin{center}
    $\mathcal{M}_\euS (\euT) \rightarrow \mathcal{M}_\euS (\euU) \oplus \mathcal{M}_\euS (\euY) \rightarrow \mathcal{M}_\euS (\euX) \overset{[1]}{\rightarrow}$
\end{center}
induced by the cartesian square \cref{Equation:exceptional_nis_square}.
\end{Corollary}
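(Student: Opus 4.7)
The plan is to extract the desired triangle directly from the cartesian square of \cref{Equation:exceptional_nis_square} by specializing $\mathcal{M} = 1_\euX$ and then pushing forward along the structure morphism to $\euS$. More precisely, by \cref{Corollary:Mayer-Vietoris_squares} applied to $\mathcal{M} = 1_\euX$, and using the identifications $a^*1_\euX \simeq 1_\euT$, $f^*1_\euX \simeq 1_\euY$, and $j^*1_\euX \simeq 1_\euU$, we obtain a cartesian square
\[
\begin{tikzpicture}
		\node (TL) at (0 , 1.2) {$a_\#\, 1_\euT$};
		\node (TR) at (2.4, 1.2) {$f_\#\, 1_\euY$};
		\node (BL) at (0, 0){$j_\#\, 1_\euU$};
        \node (BR) at (2.4,0){$1_\euX$};
		\draw[->] (TL) -- (TR);
		\draw[->] (TL) -- (BL);
		\draw[->] (TR) -- (BR);
        \draw[->] (BL) -- (BR);
\end{tikzpicture}
\]
in $\DM(\euX, \Lambda)$, where $a = fk = jg \colon \euT \to \euX$.

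Next I would apply the functor $(p_\euX)_\#$, where $p_\euX \colon \euX \to \euS$ is the given smooth representable structure morphism (which exists by \cref{Proposition:Existence_of_motives}). Since $p_\euX$, $p_\euY$, $p_\euU$, and $p_\euT$ are all smooth and representable, composition of left adjoints together with the smooth base change identifications of \cref{Proposition:Existence_of_motives} gives $(p_\euX)_\# \circ a_\# \simeq (p_\euT)_\#$, $(p_\euX)_\# \circ f_\# \simeq (p_\euY)_\#$, and $(p_\euX)_\# \circ j_\# \simeq (p_\euU)_\#$. By definition of relative motives, applying $(p_\euX)_\#$ to the four corners yields $\mathcal{M}_\euS(\euT)$, $\mathcal{M}_\euS(\euY)$, $\mathcal{M}_\euS(\euU)$, and $\mathcal{M}_\euS(\euX)$, respectively.

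Finally, since $\DM(\euX, \Lambda)$ and $\DM(\euS, \Lambda)$ are stable $\infty$-categories, a square is cartesian iff it is cocartesian. The functor $(p_\euX)_\#$ is a left adjoint (\cref{Proposition:Existence_of_motives}) and therefore preserves cocartesian squares, so the resulting square in $\DM(\euS, \Lambda)$ is also cocartesian. Unwinding the standard equivalence between cocartesian squares in a stable $\infty$-category and cofiber sequences through the biproduct of the off-diagonal terms, we obtain exactly the claimed Mayer--Vietoris cofiber sequence
\[
\mathcal{M}_\euS(\euT) \longrightarrow \mathcal{M}_\euS(\euU) \oplus \mathcal{M}_\euS(\euY) \longrightarrow \mathcal{M}_\euS(\euX) \overset{[1]}{\longrightarrow}.
\]

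There is essentially no hard step here — the content is packaged into \cref{Corollary:Mayer-Vietoris_squares}. The only small point to be careful about is verifying that the various smooth left adjoints $(-)_\#$ really compose as expected when one of the legs is itself a composite (here $a = fk = jg$), so that the four corners of the pushed-forward square genuinely are the relative motives as defined. This follows from the smooth base change equivalence $g^*f_\# \simeq f'_\# g'^*$ and the functoriality of $(-)_\#$ along compositions of smooth representable morphisms.
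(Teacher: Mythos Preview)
Your proof is correct and is essentially the same approach as the paper's, just written out more carefully. The paper's one-line proof takes $\mathcal{M} = \mathcal{M}_\euS(\euX)$ in \cref{Equation:exceptional_nis_square} and invokes smooth base change; strictly speaking this is a slight abuse since that square is set up for $\mathcal{M} \in \DM(\euX,\Lambda)$, and your version (take $\mathcal{M} = 1_\euX$, then apply $(p_\euX)_\#$) is the precise way to unpack it. One small remark: the identifications $(p_\euX)_\# \circ f_\# \simeq (p_\euY)_\#$, $(p_\euX)_\# \circ j_\# \simeq (p_\euU)_\#$, and $(p_\euX)_\# \circ a_\# \simeq (p_\euT)_\#$ require only the functoriality of $(-)_\#$ under composition of smooth representable morphisms (i.e.\ uniqueness of left adjoints applied to $(gf)^* \simeq f^*g^*$), not the base change statement of \cref{Proposition:Existence_of_motives}; you can drop the reference to base change there.
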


\begin{proof}
One takes $\mathcal{M} := \mathcal{M}_\euS (\euX)$ in \cref{Equation:exceptional_nis_square}. The result then follows by smooth base change \cref{Proposition:Existence_of_motives}. 
\end{proof}

We now explain a descent result for $G$-equivariant resolutions of singularites. The following result is a direct corollary of  \cite{ATW}[Thm. 8.1.2] of Abramovich, Temkin and W\l{}odarczyk and will be critical for the main results of this paper. See also \cite{KollarResSing}[3.4].

\begin{Theorem}
\label{Theorem:Equivariant_Resultion_of_singularities}
Suppose $k$ is of characteristic $0$ and let $ G$ be a smooth group scheme acting on a reduced quasi-projective $X$. Then there exists a $G$-equivariant surjective projective birational morphism $\tilde{X} \to X$ such that $\tilde{X}$ is smooth over $k$. 
\end{Theorem}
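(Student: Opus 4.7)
The plan is to invoke the functorial embedded resolution of singularities in characteristic zero due to Abramovich, Temkin and W{\l}odarczyk (\cite{ATW}[Thm.~8.1.2]), and then upgrade it to a $G$-equivariant resolution using functoriality with respect to smooth morphisms.

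First I would apply the ATW algorithm to the reduced quasi-projective $k$-scheme $X$ to obtain a proper birational morphism $\pi: \tilde{X} \to X$ with $\tilde{X}$ smooth over $k$. Because the algorithm constructs $\tilde{X}$ as a finite sequence of blowups of $X$ along smooth centers contained in the successive singular loci, $\pi$ is automatically projective, surjective, and an isomorphism over the smooth locus of $X$; in particular $\tilde{X}$ is itself quasi-projective over $k$.

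The main step is to promote $\pi$ to a $G$-equivariant morphism. Since $G$ is smooth over $k$, both the action morphism $a: G \times X \to X$ and the second projection $p: G \times X \to X$ are smooth. The functoriality of the ATW resolution with respect to smooth morphisms produces canonical isomorphisms of resolutions of $G \times X$:
\begin{equation*}
a^*\tilde{X} \;\cong\; \widetilde{G \times X} \;\cong\; p^*\tilde{X} \;\cong\; G \times \tilde{X}.
\end{equation*}
Composing this chain with the natural projection $a^*\tilde{X} \to \tilde{X}$ yields a morphism $\tilde{a}: G \times \tilde{X} \to \tilde{X}$ that lifts the action $a$ along $\pi$. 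The associativity of $\tilde{a}$ is verified by applying the same functoriality statement to the two smooth morphisms $G \times G \times X \rightrightarrows G \times X$, which forces the two resulting lifts $G \times G \times \tilde{X} \to \tilde{X}$ to agree; the unit axiom then follows from birationality of $\pi$ (two lifts agreeing on the dense open preimage of the smooth locus must coincide).

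I expect the only delicate point to be ensuring that the functoriality statement is invoked in the precise form needed, namely smooth functoriality of the resolution algorithm in characteristic zero, which is exactly what ATW provide. As an alternative route, one could cite \cite{KollarResSing}[3.4], where the $G$-equivariant version is spelled out explicitly for smooth group scheme actions in characteristic zero.
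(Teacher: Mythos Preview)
Your proposal is correct and follows essentially the same approach as the paper: both apply the ATW resolution to $X$ and then use smooth functoriality with respect to the action map $a$ and the projection $p$ to transport the $G$-action to $\tilde{X}$. Your version is in fact slightly more careful, as you explicitly address the associativity and unit axioms for the lifted action, whereas the paper simply asserts that the identification defines a group action.
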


\begin{proof}
Consider the action map $ a: G \times X \rightarrow X$ and let $\tilde{X}$ be the resolution of $X$. Then by smooth functoriality of the resolution in \cite{ATW}[Thm. 8.1.2] we see that we have a identification
\begin{center}
    $\widetilde{G \times X} \simeq (G \times X) \times_X \tilde{X}$.
\end{center}
But we also have the projection map $p_X : G \times X \rightarrow X$ which is again smooth. Using \cite{ATW}[Thm. 8.1.2] we have the identification
\begin{center}
$\widetilde{G \times X} \simeq G \times \tilde{X}.$
\end{center}
Putting both identifications together defines a group action $\tilde{a}:  G \times \tilde{X} \rightarrow \tilde{X}$ such that the commutative diagram
\begin{center}
\begin{tikzpicture}
		\node (TL) at (0 , 1.2) {$G \times \tilde{X}$};
		\node (TR) at (2, 1.2) {$\tilde{X}$};
		\node (BL) at (0, 0){$G \times X$};
        \node (BR) at (2,0){$X$};
		
		\draw[->] (TL) -- (TR) node [midway, above] {$\tilde{a}$};
		\draw[->] (TL) -- (BL);
		\draw[->] (TR) -- (BR);
        \draw[->] (BL) -- (BR) node [midway, above] {$a$};
	\end{tikzpicture}
\end{center}
Cartesian. The claim follows.
\end{proof}

In particular \cref{Theorem:Equivariant_Resultion_of_singularities} implies that for a reduced algebraic stack $\euX = [X/G]$ of finite type over a field $k$ of characteristic $0$ there is a regular stack $\tilde{\euX} =  [ \tilde{X}/ G]$ over $k$, together with a projective birational morphism $\tilde{\euX} \rightarrow \euX$. Moreover we have the following:

\begin{Corollary}
    \label{Corollary:Descent_for_resolution_of_singularities}
    Given a finite type reduced algebraic stack $\euX = [X/G]$ over a field $k$ of characteristic $0$, there is a $cdh$-square
    \begin{center}
	\begin{tikzpicture}
		\node (TL) at (0 , 1.2) {$\euT$};
		\node (TR) at (1.2, 1.2) {$\tilde{\euX}$};
		\node (BL) at (0, 0){$\euZ$};
        \node (BR) at (1.2,0){$\euX.$};
		
		\draw[->] (TL) -- (TR) node [midway, above] {$k$};
		\draw[->] (TL) -- (BL)node [midway, left] {$g$};
		\draw[->] (TR) -- (BR)node [midway, right] {$f$};
        \draw[->] (BL) -- (BR) node [midway, below] {$i$};
	\end{tikzpicture}
\end{center}
In particular for any $ \mathcal{M} \in \DM_{\gm} (\euX)$ the associated square \cref{Equation:cdh_square} is cartesian. 
\end{Corollary}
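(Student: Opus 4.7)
The plan is to produce the $cdh$-square directly from \cref{Theorem:Equivariant_Resultion_of_singularities} and then obtain the cartesian square on motives by a direct appeal to $cdh$-descent for $\DM$.

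First I would apply \cref{Theorem:Equivariant_Resultion_of_singularities} to the reduced quasi-projective scheme $X$ with its $G$-action, producing a $G$-equivariant projective birational morphism $\tilde{X} \to X$ with $\tilde{X}$ smooth over $k$. Passing to quotient stacks yields
\begin{equation*}
f : \tilde{\euX} := [\tilde{X}/G] \longrightarrow [X/G] = \euX,
\end{equation*}
which is representable, proper and surjective: these properties hold for $\tilde{X} \to X$ and transfer through the smooth atlas $X \to \euX$ by smooth descent.

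Next I would construct the closed substack $\euZ$. Birationality of $\tilde{X} \to X$ provides a unique maximal dense open $U \subseteq X$ over which it restricts to an isomorphism. The $G$-action on $\tilde{X}$ constructed in the proof of \cref{Theorem:Equivariant_Resultion_of_singularities} makes the resolution $G$-equivariant, so each $g \in G$ acts by an automorphism of $X$ that lifts to $\tilde{X}$ and must therefore preserve $U$; in particular $U$ is $G$-stable. Let $Z = X \setminus U$ with its reduced scheme structure (again $G$-stable) and set $\euZ := [Z/G]$ and $\euT := \euZ \times_\euX \tilde{\euX}$. The resulting cartesian square verifies the three axioms of \cref{Definition:cdh_square}: $f$ is representable proper surjective, $i : \euZ \hookrightarrow \euX$ is a closed immersion, and $f$ restricts to an isomorphism over $\euX \setminus \euZ = [U/G]$.

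For the ``in particular'' clause, the statement that the square \cref{Equation:cdh_square} associated to any $\mathcal{M} \in \DM(\euX, \Lambda)$ -- and hence to any geometric motive -- is cartesian is exactly the content of the unlabeled corollary immediately following \cref{Propostion:cdh_descent}, so no further work is required once the $cdh$-square has been produced. The only real (and mild) obstacle in the whole argument is the bookkeeping around the $G$-stability of $U$; everything else is a straightforward translation of the scheme-level resolution into stack language combined with a direct invocation of $cdh$-descent.
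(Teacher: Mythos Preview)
Your argument is correct and is exactly the intended one: the paper does not supply a proof for this corollary, treating it as an immediate consequence of \cref{Theorem:Equivariant_Resultion_of_singularities} together with the $cdh$-descent statement (\cref{Propostion:cdh_descent} and its unlabeled corollary). You have simply written out the details the paper leaves implicit, including the $G$-stability of the isomorphism locus $U$, which is indeed the only point requiring a sentence of justification.
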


\section{Geometric motives}
\label{Section:Geometric motives}

For this section we will insist on the assumption that the base $B = \spec(k)$ has \emph{characteristic $0$}. Let $ X $ a finite type scheme over $k$. We have the following definition of the category geometric motives.

 \begin{Definition}
\label{Definition:geometric motives for schemes}
Let $\DM_{\gm} (X, \Lambda)$ be the smallest full stable $\infty$-subcategory of $\DM(X, \Lambda)$ which is closed under retracts, and generated by objects
\begin{equation*}
    \{ f_\# (1_Z) (q) \ |\ f: Z\rightarrow X \text{ smooth},\  q \in \mathbf{Z}\}. 
\end{equation*}
We call $\DM_{\gm} (X, \Lambda)$ \emph{The category of geometric motives over $X$}. 
\end{Definition}

Our first observation is that we need not take all smooth morphisms over $X$ when $X$ quasi-projective in \cref{Definition:geometric motives for schemes}. In fact it is enough to take smooth morphisms which are \emph{quasi-projective}.

\begin{Lemma}
\label{Lemma:quasi-projective geometric motives are the same over quasi-projective schemes}
The category of geometric motives over a quasi-projective scheme $X$, can be equivalently described as the  smallest full stable $\infty$-subcategory of $\DM(X, \Lambda)$ which is closed under retracts, and generated by objects
\begin{equation*}
    \{ f_\# (1_Z) (q) \ |\ f: Z\rightarrow X \text{ smooth and quasi-projective},\  q \in \mathbf{Z}\}. 
\end{equation*}
\end{Lemma}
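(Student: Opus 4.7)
The plan is to reduce the statement to a Mayer--Vietoris induction based on \cref{Corollary:Mayer-Vietoris_triangle}. The inclusion $\supseteq$ is trivial, and since the right-hand thick subcategory $\mathcal{C}_{qp}$ is closed under Tate twists by construction, it suffices to show that $f_\#(1_Z) \in \mathcal{C}_{qp}$ for every smooth finite-type morphism $f \colon Z \to X$.

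The first step is to establish two elementary geometric observations. Any affine finite-type $k$-scheme $W$ equipped with a morphism to $X$ defines a quasi-projective morphism $W \to X$: embed $W \hookrightarrow \mathbf{A}^N \hookrightarrow \mathbf{P}^N$, note that the graph $W \hookrightarrow W \times_k X$ is a closed immersion because $X$ is quasi-projective and hence separated over $k$, and compose with $W \times_k X \hookrightarrow \mathbf{P}^N_X$ to obtain a quasi-compact immersion into $\mathbf{P}^N_X$. More generally, if $V \to X$ is any quasi-projective morphism then every quasi-compact open subscheme $U \subseteq V$ is again quasi-projective over $X$, since quasi-compact immersions are stable under precomposition with quasi-compact open immersions. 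Applied to an affine cover of $Z$, the first observation guarantees that $Z$ admits a finite Zariski cover by opens that are quasi-projective over $X$.

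With these in hand, I would run an induction on the minimal integer $n$ such that $Z$ admits a cover $\{V_i\}_{i=1}^n$ by opens with $V_i \to X$ quasi-projective. The case $n = 1$ is immediate. For the inductive step, set $V := V_n$ and $U := V_1 \cup \cdots \cup V_{n-1}$; then $\mathcal{M}_X(V) \in \mathcal{C}_{qp}$ by hypothesis and $\mathcal{M}_X(U) \in \mathcal{C}_{qp}$ by induction. The crucial point, where the second observation is used, is that $U \cap V = \bigcup_{i=1}^{n-1} (V_i \cap V)$ is itself covered by $n-1$ opens quasi-projective over $X$, so $\mathcal{M}_X(U \cap V) \in \mathcal{C}_{qp}$ by induction as well; feeding these into the Mayer--Vietoris cofiber sequence of \cref{Corollary:Mayer-Vietoris_triangle} associated to the Zariski (hence representable Nisnevich) square $U \cap V \to U$, $V \to Z$ then forces $\mathcal{M}_X(Z) \in \mathcal{C}_{qp}$. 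The only subtlety worth flagging is precisely the control on the cover size of $U \cap V$: without the second geometric observation above the number of opens needed could grow and the induction would fail to close; with it in place, everything else is routine.
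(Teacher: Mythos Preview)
Your proposal is correct and follows essentially the same approach as the paper: cover $Z$ by affines and induct via Mayer--Vietoris. The paper's proof is more terse, phrasing the reduction as ``$Z$ may be taken quasi-projective over $B$'' and then invoking that any morphism between quasi-projective $B$-schemes is quasi-projective; you instead verify directly that each affine open is quasi-projective over $X$ and spell out the induction on cover size, but the content is the same.
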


\begin{proof}
First we note that in \cref{Definition:geometric motives for schemes} it is enough to consider smooth morphisms $ f:Z \rightarrow X$ such that $Z$ is quasi-projective over $B$. Indeed, we may cover $Z$ by affines and then inductively use the Mayer-Vietoris sequence. 

Secondly, if $X$ itself is quasi-projective then the claim follows because any smooth morphism between two quasi-projective schemes is itself quasi-projective. 
\end{proof}

\begin{Remark}
\cref{Lemma:quasi-projective geometric motives are the same over quasi-projective schemes} also holds for more generally for finite type schemes $X$. Indeed, For a general (reduced) finite type scheme $X$, we may stratify it by quasi-projective schemes and an induction argument together with the localization triangle suffices to show this.
\end{Remark}

For our main application, we will be interested in stacks $[X/G]$ which are quasi-projective over $BG$, this together with \cref{Lemma:quasi-projective geometric motives are the same over quasi-projective schemes} motivates our definition of geometric motives over a stack.

\begin{Definition}
\label{Definition:gemoetric motives}
Let $\euX$ be in $\nislocst$, we define $\DM_{\gm} (\euX, \Lambda)$ to be the smallest full stable subcategory of $\DM(\euX, \Lambda)$ which is closed under retracts, and generated by objects
\begin{equation*}
    \{ M_\euX (\euZ)(q) \ |\ \euZ \text{  smooth, representable and quasi-projective over } \euX,\  q \in \mathbf{Z}\}
\end{equation*}
where $ M_\euX (\euZ) : = f_\# (1_\euZ) $ for $ f: \euZ \rightarrow \euX$. 
\end{Definition}

\begin{Remark}
The perhaps more natural variant of \cref{Definition:gemoetric motives} where the quasi-projective condition is omitted could also be considered. We think it would be interesting to compare these two notions. Ultimately, we chose to use \cref{Definition:gemoetric motives} because it was easier to show that it was well-behaved under $*$-pushforwards by closed immersions.
\end{Remark}

In this section we will first establish various results about when the $\infty$-subcategory $\DM_{\gm} (\euX, \Lambda)$ of geometric motives in $\DM (\euX, \Lambda)$, is preserved under some of the six operations.

Once we have addressed this, we will use these results in the second part of the section to show that the category of geometric motives can be generated by a smaller subcategory whose mapping spectra have good connectedness properties. In particular we will see that in case $\euX$ is a quotient of a quasi-projective scheme by $G$ over a field of characteristic $0$, the category of geometric motives can be generated by the subcategory of so called Chow motives.

Before we begin we would also like to point out that that while it is very natural to wonder if the objects in $\DM_{\gm} (\euX, \Lambda)$ are compact in $\DM_{\gm} (\euX, \Lambda)$, this is in general not the case. In fact, the example of \cite{ho2023revisiting}[Ex 4.1.12] in the setting of constructible sheaves works in our setting as well:

\begin{Example}
\label{Example:Geometric motives are not compact for stacks}
Let $\euX = B \mathbf{G}_m$, we claim that $ 1_{B \mathbf{G}_m}$ is not compact in $\DM(B\mathbf{G}_m, \Lambda)$. 

First by \cite{RicharzScholbach}[Thm. 2.2.10] we can identify
\begin{center}
    $\pi_0 \map_{\DM(B\mathbf{G}_m, \Lambda)} (1, 1(*)[2*]) \simeq \chow^* (B \mathbf{G}_m)_\Lambda \simeq \Lambda [x]$
\end{center}
where $x$ is in degree 1. In particular $x$ lifts to a map
\begin{center}
   $ x: 1_{B\mathbf{G}_m} \rightarrow  1_{B\mathbf{G}_m} (1)[2]$. 
\end{center}
in $\DM(B\mathbf{G}_m, \Lambda)$. We note that if we pull back $x$ along the covering morphism $\pi: \text{pt}  \rightarrow B\mathbf{G}_m$, we have that $\pi^* x \simeq 0$, because $ x $ corresponds to $ c_1 (\mathcal{O} (1))$. 

Thus if we pullback the filtered colimit
\begin{center}
    $\mathcal{M} := \varinjlim  (1_{B\mathbf{G}_m} \overset{x}{\rightarrow}  1_{B\mathbf{G}_m} (1)[2] \overset{x}{\rightarrow} 1_{B\mathbf{G}_m} (2)[4] \overset{x}{\rightarrow} \cdots)$
\end{center}
along $\pi$ we see that 
\begin{center}
    $\pi^* \mathcal{M} \simeq 0,$
\end{center}
and since $\pi$ is a smooth cover, it follows that $\pi^*$ is conservative hence $\mathcal{M} \simeq 0$. It follows that 
\begin{center}
    $\map_{\DM(B\mathbf{G}_m, \Lambda)} (1, \mathcal{M}) \simeq 0.$
\end{center}
On the other hand 
\begin{center}
   $ \varinjlim (\map_{\DM(B\mathbf{G}_m, \Lambda)} (1, 1) \rightarrow \map_{\DM(B\mathbf{G}_m, \Lambda)} (1, 1(1) [2])\rightarrow \cdots)$
\end{center}
can be seen to be not equivalent to $0$ by looking at for instance $\pi_0$ which is $\Lambda[x, x^{-1}]$. 
\end{Example}

The upshot of \cref{Example:Geometric motives are not compact for stacks}, is that is shows that the geometric motives are in general not compact objects in $\DM(\euX, \Lambda)$. 

\subsection{The six operations and geometric motives}

\begin{Lemma}
\label{Lemma:DM_gm_is_stable_under_upper_*}
For any $ f: \euX \rightarrow \euY$ of stacks, the functor $f^*$ restricts to a functor
\begin{center}
$f^*:\DM_{\gm} (\euY, \Lambda) \rightarrow \DM_{\gm}(\euX, \Lambda)$
\end{center}
\end{Lemma}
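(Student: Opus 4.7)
The plan is straightforward given the smooth base change property already established. The generators of $\DM_{\gm}(\euY, \Lambda)$ are the objects of the form $M_\euY(\euZ)(q) = g_\#(1_\euZ)(q)$ for $g : \euZ \to \euY$ smooth, representable and quasi-projective. I would begin by showing that $f^*$ sends any such generator into $\DM_{\gm}(\euX, \Lambda)$, and then extend to the full thick subcategory by a formal argument.

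For a generator, form the cartesian square
\begin{center}
\begin{tikzpicture}
    \node (TL) at (0,1.2) {$\euZ'$};
    \node (TR) at (1.8,1.2) {$\euZ$};
    \node (BL) at (0,0) {$\euX$};
    \node (BR) at (1.8,0) {$\euY.$};
    \draw[->] (TL) -- (TR) node [midway, above] {$f'$};
    \draw[->] (TL) -- (BL) node [midway, left] {$g'$};
    \draw[->] (TR) -- (BR) node [midway, right] {$g$};
    \draw[->] (BL) -- (BR) node [midway, below] {$f$};
\end{tikzpicture}
\end{center}
Since $g$ is smooth, representable, and quasi-projective, all three properties are stable under base change, so $g'$ is also smooth, representable, and quasi-projective. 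By the smooth base change isomorphism in \cref{Proposition:Existence_of_motives}, one has $f^* g_\# \simeq g'_\# f'^*$. Since $f'^*$ is symmetric monoidal it sends $1_\euZ$ to $1_{\euZ'}$, and since Tate twists are defined purely in terms of $\mathbf{G}_m$ and the symmetric monoidal structure (preserved by $f^*$ by \cref{Proposition:4_functors_for_DM}), we conclude
\begin{equation*}
f^*\bigl(M_\euY(\euZ)(q)\bigr) \simeq M_\euX(\euZ')(q) \in \DM_{\gm}(\euX, \Lambda).
\end{equation*}

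To extend this to arbitrary objects of $\DM_{\gm}(\euY, \Lambda)$, I would use the standard thick-subcategory argument. The functor $f^*$ is a left adjoint between stable presentable $\infty$-categories (\cref{Proposition:4_functors_for_DM}), hence is exact and preserves retracts. Therefore the preimage $(f^*)^{-1}\bigl(\DM_{\gm}(\euX, \Lambda)\bigr) \subset \DM(\euY, \Lambda)$ is a stable subcategory closed under retracts. Since it contains all the generators $M_\euY(\euZ)(q)$, it contains the smallest such subcategory, namely $\DM_{\gm}(\euY, \Lambda)$. This gives the desired restriction.

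There is no substantive obstacle here; the proof is essentially a one-line application of smooth base change combined with the symmetric monoidality of $f^*$, packaged via the universal property of the thick subcategory generated by a family.
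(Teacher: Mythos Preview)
Your proof is correct and follows essentially the same approach as the paper: reduce to generators and apply the smooth base change equivalence of \cref{Proposition:Existence_of_motives} to the relevant cartesian square. Your version is slightly more explicit in spelling out the thick-subcategory extension and the stability of ``smooth, representable, quasi-projective'' under base change, but the core argument is identical.
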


\begin{proof}
To prove the claim it is enough to check on generators $\mathcal{M}_\euY(\euZ)(q)$ in $\DM_{\gm}(\euY, \Lambda)$ thus by considering the cartesian square cartesian square
\begin{center}
	\begin{tikzpicture}
		\node (TL) at (0 , 1.5) {$\euZ \times_\euY \euX$};
		\node (TR) at (1.5, 1.5) {$ \euZ$};
		\node (BL) at (0, 0){$\euX$};
        \node (BR) at (1.5,0){$\euY$};
		
		\draw[->] (TL) -- (TR) node [midway, above] {$g$};
		\draw[->] (TL) -- (BL)node [midway, left] {$q$};
		\draw[->] (TR) -- (BR)node [midway, right] {$p$};
        \draw[->] (BL) -- (BR) node [midway, below] {$f$};
	\end{tikzpicture}
\end{center}
where $ \mathcal{M}_{\euY} (\euZ) (q)  = p_\# 1_\euZ (q) $, from \cref{Proposition:Existence_of_motives} it follows that
\begin{equation*}
     f^*(\mathcal{M}_\euY(\euZ)(q)) \simeq f^*p_{\#}1_{\euZ} (q)  \simeq q_{\#}g^*1_\euZ (q).
\end{equation*}
\end{proof}

\begin{Proposition}
\label{Lemma:DM_gm is stable under lower ! for smooth maps}
If $ f: \euX \rightarrow \euY$ is smooth and representable then the functor $ f_\#$ restricts to a functor 
\begin{center}
    $f_\#:\DM_{\gm} (\euX, \Lambda) \rightarrow \DM_{\gm}(\euY, \Lambda)$.
\end{center}
\end{Proposition}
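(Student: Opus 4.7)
The plan is to leverage the fact that $f_\#$ is a left adjoint. Since $f_\#$ preserves colimits and retracts, and $\DM_\gm(\euX,\Lambda)$ is the smallest thick stable $\infty$-subcategory of $\DM(\euX,\Lambda)$ containing the generators $M_\euX(\euZ)(q)$ (with $p : \euZ \to \euX$ smooth, representable, and quasi-projective), it suffices to check that $f_\# M_\euX(\euZ)(q)$ lies in $\DM_\gm(\euY,\Lambda)$ for each such generator.

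By the composition rule for left adjoints (dual to the isomorphism $(f\circ p)^* \simeq p^* f^*$), one has $f_\# \circ p_\# \simeq (f\circ p)_\#$, and so
\[
f_\# M_\euX(\euZ)(q) \;\simeq\; (f\circ p)_\#(1_\euZ)(q) \;=\; M_\euY(\euZ)(q),
\]
where now $\euZ$ is viewed over $\euY$ via the composite $f \circ p$. This composite is smooth (composition of smooth) and representable (composition of representable). The obstruction is that $f\circ p$ need not be quasi-projective over $\euY$, since $f$ is assumed only smooth and representable, and quasi-projectivity is not preserved by arbitrary smooth base change on the target.

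To handle this, I would carry out a Mayer-Vietoris descent on $\euZ$. A smooth finite-type morphism of schemes is Zariski-locally on the source a composition of an \'etale morphism and an affine-space projection, both of which are quasi-projective; hence such a morphism is Zariski-locally on the source quasi-projective. Passing to a Nisnevich atlas of $\euZ$ (which is quasi-compact since it is quasi-projective over $\euX$, which in turn is of finite type over $B$), one obtains a finite open cover $\{\euZ_i\}$ of $\euZ$ such that each composite $\euZ_i \to \euY$ is smooth, representable, and quasi-projective. Applying \cref{Corollary:Mayer-Vietoris_triangle} inductively to this cover and its iterated intersections expresses $M_\euY(\euZ)(q)$ as an iterated cofiber of motives $M_\euY(\euZ_I)(q)$ attached to finite intersections $\euZ_I := \bigcap_{i \in I} \euZ_i$. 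Each such $\euZ_I$ is open in some $\euZ_i$, hence smooth, representable, and quasi-projective over $\euY$, so $M_\euY(\euZ_I)(q)$ is a generator of $\DM_\gm(\euY,\Lambda)$, and stability under iterated cofibers and retracts concludes the argument.

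The main obstacle is the local quasi-projectivity step: one must verify that the Zariski-local quasi-projectivity of smooth morphisms of schemes can be promoted to an analogous statement for the stack $\euZ$ over $\euY$, which requires carefully descending the scheme-level cover through a Nisnevich atlas and ensuring that the resulting open substacks of $\euZ$ remain quasi-projective over $\euY$. Once this is in place, the Mayer-Vietoris induction via \cref{Corollary:Mayer-Vietoris_triangle} is routine, and parallels the scheme-level argument of \cite{chowdhury2021motivic}[Prop. 4.1.2] transferred from $\SH$ to $\DM(-,\Lambda)$.
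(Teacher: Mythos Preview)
Your first paragraph is precisely the paper's entire proof: the authors simply invoke $f_\# \circ g_\# \simeq (f\circ g)_\#$ and stop there. You then go further and correctly observe that, under \cref{Definition:gemoetric motives}, the composite $f\circ p$ is smooth and representable but need not be quasi-projective over $\euY$, so $M_\euY(\euZ)(q)$ is not obviously a generator of $\DM_{\gm}(\euY,\Lambda)$. This is a genuine point that the paper's one-line argument does not address. In practice, every invocation of this proposition in the paper (e.g.\ in \cref{Lemma:DM_gm_is_zariski_local}, \cref{Lemma:DM_gm is stable under lower-* for smooth proper representable morphisms}, \cref{Proposition:closed immersions preserve geometric objects general case}, \cref{Corollary:qproj_lower_shriek_preserves_geometric_motives}) has $f$ either an open immersion or the structure map of a projective bundle, hence quasi-projective; so the authors may be tacitly assuming $f$ quasi-projective, in which case the composition argument is complete.

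Your Mayer--Vietoris patch for general smooth representable $f$ is a reasonable strategy, but the obstacle you flag is real and not yet resolved in your sketch. What you need is a \emph{Zariski} open cover of the stack $\euZ$ by substacks quasi-projective over $\euY$, and this does not follow from the scheme-level local structure of smooth morphisms by passing to a Nisnevich atlas: an open cover of the atlas does not in general descend to an open cover of $\euZ$. One would instead want to argue on the target side, covering $\euY$ (or its atlas) so that the pullback of $f$ becomes quasi-projective, and then use \cref{Lemma:DM_gm_is_zariski_local}; but that lemma already relies on the present proposition, so some care is needed to avoid circularity. If you only need the proposition in the generality the paper actually uses, adding the hypothesis that $f$ be quasi-projective makes the one-line proof correct and avoids the issue entirely.
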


\begin{proof}
This follows from the fact that for a smooth representable $g: \euZ \rightarrow \euX$, we have that $f_\# \circ g_\# \simeq (f \circ g)_\#$. 
\end{proof}

\begin{Lemma}
\label{Lemma:DM_gm_is_closed_under_tensor_product}
If $ \mathcal{M}, {N} \in \DM_{\gm} (\euX, \Lambda)$ then so is $ \mathcal{M} \otimes \mathcal{N}.$
\end{Lemma}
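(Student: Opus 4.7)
The plan is to reduce to the generators of $\DM_{\gm}(\euX, \Lambda)$ and show that the tensor product of two such generators is again a generator, via the projection formula and smooth base change.

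First, I would note that by \cref{Proposition:4_functors_for_DM}, the $\infty$-category $\DM(\euX, \Lambda)$ is closed symmetric monoidal, so for any fixed $\mathcal{M}$ the functor $\mathcal{M} \otimes (-)$ is a left adjoint and hence preserves colimits; it is also exact (stable). Consequently, the full subcategory
\begin{equation*}
\{\mathcal{N} \in \DM(\euX,\Lambda) \mid \mathcal{M} \otimes \mathcal{N} \in \DM_{\gm}(\euX,\Lambda)\}
\end{equation*}
is stable and closed under retracts. By symmetry, the same holds in the first variable. Thus it is enough to verify the claim on the generators $\mathcal{M} = M_\euX(\euZ)(q)$ and $\mathcal{N} = M_\euX(\euW)(r)$ where $f\colon \euZ \to \euX$ and $g\colon \euW \to \euX$ are smooth, representable and quasi-projective.

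Next, the Tate twist is invertible and compatible with $\otimes$, so we may pull all twists out and reduce to showing $f_\# 1_\euZ \otimes g_\# 1_\euW \in \DM_{\gm}(\euX,\Lambda)$. Consider the cartesian square
\begin{center}
\begin{tikzpicture}
\node (TL) at (0,1.2) {$\euZ \times_\euX \euW$};
\node (TR) at (2.2,1.2) {$\euW$};
\node (BL) at (0,0) {$\euZ$};
\node (BR) at (2.2,0) {$\euX$.};
\draw[->] (TL) -- (TR) node [midway, above] {$q$};
\draw[->] (TL) -- (BL) node [midway, left] {$p$};
\draw[->] (TR) -- (BR) node [midway, right] {$g$};
\draw[->] (BL) -- (BR) node [midway, below] {$f$};
\end{tikzpicture}
\end{center}
By smooth base change (\cref{Proposition:Existence_of_motives}), $f^* g_\# 1_\euW \simeq p_\# q^* 1_\euW \simeq p_\# 1_{\euZ\times_\euX \euW}$. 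Applying the smooth projection formula (which is a formal consequence of smooth base change together with the $(f_\#,f^*)$-adjunction) twice, we obtain
\begin{equation*}
f_\# 1_\euZ \otimes g_\# 1_\euW \simeq f_\#\bigl(1_\euZ \otimes f^* g_\# 1_\euW\bigr) \simeq f_\#\bigl(1_\euZ \otimes p_\# 1_{\euZ \times_\euX \euW}\bigr) \simeq (f\circ p)_\# 1_{\euZ \times_\euX \euW},
\end{equation*}
which is exactly $M_\euX(\euZ \times_\euX \euW)$.

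It remains to check that $f \circ p\colon \euZ \times_\euX \euW \to \euX$ is smooth, representable and quasi-projective, so that its motive is a generator of $\DM_{\gm}(\euX, \Lambda)$. Smoothness and representability are both stable under composition and base change, so these are immediate. For quasi-projectivity, $p$ is the base change of the quasi-projective morphism $g$ and hence is quasi-projective, and $f$ is quasi-projective by hypothesis; their composition is quasi-projective in our setting (finite type over $B$ with affine diagonal). The only mild point to verify is this composition property for quasi-projective morphisms; should one wish to avoid it entirely, one can instead cover $\euZ \times_\euX \euW$ by opens that are quasi-projective over $\euX$ and assemble the motive via iterated Mayer--Vietoris triangles (\cref{Corollary:Mayer-Vietoris_triangle}), which all live in $\DM_{\gm}(\euX,\Lambda)$. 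This is the only delicate step; the rest is a direct computation with the six-functor formalism.
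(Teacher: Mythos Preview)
Your argument is correct and is precisely the one the paper has in mind: the paper simply defers to \cite{CiDeg}[4.2.3], whose proof is exactly the reduction to generators followed by the identification $M_\euX(\euZ)\otimes M_\euX(\euW)\simeq M_\euX(\euZ\times_\euX\euW)$ via the smooth projection formula and smooth base change that you carry out. Your extra care about closure of quasi-projective morphisms under composition (and the Mayer--Vietoris fallback) is harmless in this Noetherian, affine-diagonal setting.
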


\begin{proof}
    The proof is the same as in \cite{CiDeg}[4.2.3].
\end{proof}

\begin{Lemma}
\label{Lemma:DM_gm_is_zariski_local}
Suppose $\euX$ is a finite type Nis-loc stack
and that there exists a Zariski cover $ \euX = \bigcup_i \euU_i$, then an object  $ \mathcal{M} \in \DM(\euX, \Lambda)$ is in $\DM_{\gm}(\euX, \Lambda)$ if and only if $M|_{\euU_i}$ is in $\DM_{\gm}(\euU_i, \Lambda)$. 
\end{Lemma}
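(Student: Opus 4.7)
The ``only if'' direction is immediate from \cref{Lemma:DM_gm_is_stable_under_upper_*}, so I will focus on the converse. Since $\euX$ is of finite type over $B$ it is quasi-compact, so the Zariski cover $\{\euU_i\}$ may be refined to a finite one, and an induction on the number of opens reduces the problem to the case of a two-element cover $\euX = \euU \cup \euV$. I write $\euW := \euU \cap \euV$ and let $j : \euU \hookrightarrow \euX$, $f : \euV \hookrightarrow \euX$, $k : \euW \hookrightarrow \euV$, $g : \euW \hookrightarrow \euU$, and $a := f \circ k = j \circ g : \euW \hookrightarrow \euX$ denote the resulting open immersions.

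The key observation is that the Zariski square formed by these opens is in fact a representable Nisnevich square in the sense of \cref{Definition:Nis_sqaure}: the map $f$ is an open immersion and hence representable \'etale, and since $\euX - \euU$ is set-theoretically contained in $\euV$, the restriction of $f$ to $(\euX - \euU)_{\mathrm{red}}$ is an isomorphism. I therefore apply \cref{Corollary:Mayer-Vietoris_squares} to $\mathcal{M}$: the resulting cartesian square \cref{Equation:exceptional_nis_square} yields, by stability, the Mayer--Vietoris cofiber sequence
\begin{equation*}
a_{\#} a^* \mathcal{M} \to j_{\#} j^* \mathcal{M} \oplus f_{\#} f^* \mathcal{M} \to \mathcal{M}
\end{equation*}
in $\DM(\euX, \Lambda)$.

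By hypothesis $j^* \mathcal{M}$ and $f^* \mathcal{M}$ are geometric, and hence so is $a^* \mathcal{M}$ (as a further restriction of, say, $j^* \mathcal{M}$) by \cref{Lemma:DM_gm_is_stable_under_upper_*}. Because $j$, $f$, and $a$ are all open immersions, they are smooth and representable, so by \cref{Lemma:DM_gm is stable under lower ! for smooth maps} each of the three objects $a_\# a^* \mathcal{M}$, $j_\# j^* \mathcal{M}$, and $f_\# f^* \mathcal{M}$ belongs to $\DM_{\gm}(\euX, \Lambda)$. Since this subcategory is stable and closed under retracts, it is closed under cofibers, which forces $\mathcal{M} \in \DM_{\gm}(\euX, \Lambda)$.

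I do not expect a serious obstacle here; the argument is an application of Nisnevich descent together with the preservation properties already established for $f^*$ and smooth $f_{\#}$. The one point that deserves real care is verifying that the Zariski square qualifies as a representable Nisnevich square in the sense of \cref{Definition:Nis_sqaure}---this is precisely where the Zariski (as opposed to, say, smooth) nature of the cover enters.
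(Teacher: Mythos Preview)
Your proof is correct and follows essentially the same approach as the paper: reduce by induction to a two-term Zariski cover, recognise the resulting square as a Nisnevich square, and use the Mayer--Vietoris cofiber sequence from \cref{Corollary:Mayer-Vietoris_squares} together with the stability of $\DM_{\gm}$ under $f^*$ and smooth $f_{\#}$ to conclude. Your write-up is in fact slightly more careful than the paper's, spelling out the ``only if'' direction, the quasi-compactness step, and the verification that a Zariski square is a representable Nisnevich square.
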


\begin{proof}
By arguing inductively it is enough to consider the case that $\euX = \euU \cup \euV$. Via the Nisnevich square
\begin{center}
	\begin{tikzpicture}
		\node (TL) at (0 , 1.5) {$\euU \times_\euX \euV$};
		\node (TR) at (1.5, 1.5) {$ \euV$};
		\node (BL) at (0, 0){$\euU$};
        \node (BR) at (1.5,0){$\euX,$};
		
		\draw[right hook->] (TL) -- (TR) node [midway, above] {$j_{\euU}'$};
		\draw[right hook->] (TL) -- (BL)node [midway, left] {$j'_\euV$};
		\draw[right hook->] (TR) -- (BR)node [midway, right] {$j_\euV$};
        \draw[right hook->] (BL) -- (BR) node [midway, below] {$j_\euU$};
	\end{tikzpicture}
\end{center}
for each $\mathcal{M} \in \DM(\euX, \Lambda)$, write $ \mathcal{M}_{\euW} := {j_\euW}_{\#} j^*_\euW \mathcal{M} $ for $\euW= \euU \times_\euX \euV, \euU, \euV$,  we get a triangle of motives by \cref{Corollary:Mayer-Vietoris_squares}
\begin{equation*}
    \mathcal{M}_{\euU \times_\euX \euV} \rightarrow \mathcal{M}_{\euU} \oplus \mathcal{M}_{\euV} \rightarrow \mathcal{M} \overset{[1]}{\rightarrow }
\end{equation*}
Then since $\mathcal{M}_{\euU \times_\euX \euV}$ and $\mathcal{M}_{\euU} \oplus \mathcal{M}_{\euV}$ are contained in $\DM_{\gm} (\euX, \Lambda)$ it follows that $\mathcal{M} \in \DM_{\gm}(\euX, \Lambda)$. 
\end{proof}

\begin{Lemma}
\label{Lemma:Thom_spaces_preserve_DM_gm}
For any stack $\euX$ and vector bundle $\eupsilon$ over $\euX$, tensoring by $Th(\eupsilon)$ and $Th(-\eupsilon)$ preserve $\DM_{\gm}(\euX, \Lambda)$. 
\end{Lemma}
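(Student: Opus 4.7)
The plan is to leverage the orientation of $\DM(-,\Lambda)$ on $\nislocst$ recorded in \cref{Proposition:purity} to identify the Thom space of a rank-$d$ vector bundle with $1_\euX(d)[2d]$, and then to appeal to the fact that $\DM_{\gm}(\euX, \Lambda)$ is closed under shifts and Tate twists by construction.

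First, I would reduce to the case where $\eupsilon$ has constant rank. Since $\euX$ is of finite type it has only finitely many connected components, on each of which the rank of $\eupsilon$ is constant; applying \cref{Lemma:DM_gm_is_zariski_local} to this Zariski decomposition reduces the claim to the constant rank case.

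With $\eupsilon$ of rank $d$, the Thom equivalence $Th(\eupsilon) \simeq 1_\euX(d)[2d]$ in $\DM(\euX, \Lambda)$ would follow formally from the purity isomorphism $p^! \simeq p^*(d)[2d]$ of \cref{Proposition:purity} applied to the smooth representable projection $p: \eupsilon \to \euX$ (together with the zero section). Since Thom spaces of virtual bundles are invertible and $Th(\eupsilon) \otimes Th(-\eupsilon) \simeq 1_\euX$, we would also obtain $Th(-\eupsilon) \simeq 1_\euX(-d)[-2d]$. Tensoring by $1_\euX(\pm d)[\pm 2d]$ is just the shift-and-Tate-twist operation, which manifestly preserves $\DM_{\gm}(\euX,\Lambda)$: this subcategory is stable (hence closed under shifts) and closed under Tate twists by \cref{Definition:gemoetric motives}.

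The main obstacle, as I anticipate it, is cleanly formulating the Thom equivalence in the stacky setting; but this is essentially a repackaging of the orientation already recorded in \cref{Proposition:purity}, so the argument should ultimately be bookkeeping rather than requiring new technical input.
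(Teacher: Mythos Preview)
Your proposal is correct and follows essentially the same route as the paper: the paper's proof is the single sentence ``This follows from the fact that $\DM(\euX, \Lambda)$ is oriented i.e.\ $- \otimes Th(\eupsilon) \simeq (n)[2n]$,'' which is precisely your argument via \cref{Proposition:purity}. Your additional reduction to constant rank via \cref{Lemma:DM_gm_is_zariski_local} is a careful touch the paper leaves implicit, but the core idea is identical.
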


\begin{proof}
This follows from the fact that $\DM(\euX, \Lambda)$ is oriented i.e. $ - \otimes Th(\eupsilon) \simeq (n) [2n]$ .
\end{proof}

\begin{Corollary}
\label{Lemma:DM_gm is stable under lower-* for smooth proper representable morphisms}
Let $f: \euX \rightarrow \euY$ be a smooth and proper representable morphism in $\nislocst$. Then the functor $f_*$ restricts to a functor
\begin{center}
$f_* : \DM_{\gm} (\euX, \Lambda) \rightarrow \DM_{\gm} (\euY, \Lambda).$
\end{center}
\end{Corollary}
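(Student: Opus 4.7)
The plan is to reduce $f_*$ to the composition of a Tate twist with $f_\#$, for which preservation of $\DM_{\gm}$ is already known. Since $f$ is proper and representable, Proposition \ref{Proposition:Existence_of_exceptional_functors}(3) gives a natural equivalence $f_* \simeq f_!$. Since $f$ is simultaneously smooth representable of relative dimension $d$, the purity isomorphism of Proposition \ref{Proposition:purity}(1) yields $f^! \simeq f^*(d)[2d]$.

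Passing to left adjoints in this last equivalence (Tate twists are invertible, so they are their own adjoints up to an inverse twist) produces a natural equivalence
\begin{equation*}
f_!(-) \simeq f_\#\bigl((-)(-d)[-2d]\bigr).
\end{equation*}
Combining the two displays gives $f_*(M) \simeq f_\#\bigl(M(-d)[-2d]\bigr)$ for every $M \in \DM(\euX, \Lambda)$.

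To finish, take $M \in \DM_{\gm}(\euX,\Lambda)$. By construction (Definition \ref{Definition:gemoetric motives}) the subcategory $\DM_{\gm}(\euX, \Lambda)$ is stable under arbitrary Tate twists and shifts, so $M(-d)[-2d]$ still lies in $\DM_{\gm}(\euX, \Lambda)$. Then Lemma \ref{Lemma:DM_gm is stable under lower ! for smooth maps} (stability of geometric motives under $f_\#$ along smooth representable morphisms) gives $f_\#\bigl(M(-d)[-2d]\bigr) \in \DM_{\gm}(\euY, \Lambda)$, which is the desired conclusion.

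The only potential subtlety is that the relative dimension $d$ need not be globally constant on $\euX$. This is not really an obstacle: $d$ is locally constant on the finite type stack $\euX$, so after decomposing $\euX$ into the (finitely many) clopen substacks on which $d$ is constant and using Zariski locality of $\DM_{\gm}$ (Lemma \ref{Lemma:DM_gm_is_zariski_local}), the argument above applies on each piece separately. I do not expect a substantive difficulty here; the content of the corollary is entirely the combination of purity, proper $= f_!$, and the previously established preservation results.
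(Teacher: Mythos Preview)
Your proof is correct and follows essentially the same route as the paper: combine $f_*\simeq f_!$ for proper representable morphisms, purity $f^!\simeq f^*(d)[2d]$ (hence $f_!\simeq f_\#\circ(-)(-d)[-2d]$ on left adjoints), and the stability of $\DM_{\gm}$ under $f_\#$ and Tate twists. The paper phrases the last step via \cref{Lemma:Thom_spaces_preserve_DM_gm} rather than the definition, and does not spell out the locally-constant-dimension caveat you add, but the argument is the same.
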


\begin{proof}
The corollary follows immediately from \cref{Lemma:DM_gm is stable under lower ! for smooth maps}, the equivalence $\alpha_f: f_!  \simeq f_*$ of \cref{Proposition:Existence_of_exceptional_functors} (3) and purity \cref{Proposition:purity} together with \cref{Lemma:Thom_spaces_preserve_DM_gm}. 
\end{proof}

Our goal now is to show that for projective morphisms the lower-$*$ functor preserves geometric objects. We will do this in two steps, first we will show that closed immersions have this property and then use the fact that for a general projective morphism we can factor it by closed immersion and a morphism from a projective space.

\begin{Lemma}
\label{Lemma:closed immersions preserve geometric objects special case}
Let $i: \euX \hookrightarrow \euY$ be a closed immersion in $\nislocst$ and suppose that $\euX$ has the resolution property. Then the functor $i_*$ restricts to a functor 
\begin{center}
    $i_*: \DM_{\gm} (\euX, \Lambda) \rightarrow \DM_{\gm} (\euY, \Lambda).$
\end{center}
\end{Lemma}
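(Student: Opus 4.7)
The plan is to reduce, via the thick-closure definition of $\DM_{\gm}$, to showing that $i_*\bigl(f_\#(1_\euZ)\bigr)(q)$ lies in $\DM_{\gm}(\euY, \Lambda)$ for each generator $M_\euX(\euZ)(q)$, where $f: \euZ \to \euX$ is smooth, representable, and quasi-projective. Using the resolution property on $\euX$ together with the quasi-projectivity of $f$, I would factor $f$ through a projective completion $\euZ \overset{j}{\hookrightarrow} \bar{\euZ} \overset{\bar{f}}{\to} \euX$, with $\bar{\euZ}$ a closed subscheme of some projective bundle $\mathbf{P}_\euX(E)$. Writing $\iota: \euZ^c \hookrightarrow \bar{\euZ}$ for the reduced closed complement, the localization triangle on $\bar{\euZ}$, combined with $\bar{f}_* \simeq \bar{f}_!$ (since $\bar{f}$ is proper) and purity $f_\# \simeq f_!(d)[2d]$, produces a cofiber sequence in $\DM(\euX, \Lambda)$:
\begin{equation*}
 f_\#(1_\euZ) \longrightarrow \bar{f}_*(1_{\bar{\euZ}})(d)[2d] \longrightarrow (\bar{f}\iota)_*(1_{\euZ^c})(d)[2d].
\end{equation*}
Applying the exact functor $i_*$ then reduces the problem to the following auxiliary claim: for any proper representable $h: \euW \to \euY$ arising by composition with $i$, and any $q \in \mathbf{Z}$, $h_*(1_\euW)(q) \in \DM_{\gm}(\euY, \Lambda)$.

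I would prove this auxiliary claim by Noetherian induction on $\dim \euW$, using equivariant resolution of singularities (\cref{Theorem:Equivariant_Resultion_of_singularities}) and the CDH descent square associated to it (\cref{Corollary:Descent_for_resolution_of_singularities}). When $\euW$ is not yet smooth over $k$, choose $p: \tilde{\euW} \to \euW$ projective birational with $\tilde{\euW}$ smooth; let $\euE \subset \tilde{\euW}$ denote the exceptional locus and $\euB = p(\euE) \subset \euW$ its image, both of strictly smaller dimension. The associated Cartesian CDH square, pushed forward by $h_*$, realizes $h_*(1_\euW)$ as the fiber product of $(hp)_*(1_{\tilde{\euW}})$, $(h|_\euB)_*(1_\euB)$, and $(hp|_\euE)_*(1_\euE)$; by the inductive hypothesis the latter two lie in $\DM_{\gm}(\euY, \Lambda)$.

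The remaining term $(hp)_*(1_{\tilde{\euW}})$, with $\tilde{\euW}$ smooth over $k$ and equivariantly quasi-projective, is handled by a graph factorization. Since $\tilde{\euW} \to BG$ is smooth, representable, and quasi-projective, so is the base-changed projection $\pi_2: \tilde{\euW} \times_{BG} \euY \to \euY$, and one writes $hp = \pi_2 \circ \Gamma$ where $\Gamma: \tilde{\euW} \hookrightarrow \tilde{\euW} \times_{BG} \euY$ is the (closed-immersion) graph. Using that $hp$ is proper together with purity for $\pi_2$, one obtains $(hp)_*(1_{\tilde{\euW}}) \simeq (\pi_2)_\#\bigl(\Gamma_*(1_{\tilde{\euW}})\bigr)(-d)[-2d]$ with $d = \dim_k \tilde{\euW}$. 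Since $(\pi_2)_\#$ preserves $\DM_{\gm}$ by \cref{Lemma:DM_gm is stable under lower ! for smooth maps}, it suffices to show $\Gamma_*(1_{\tilde{\euW}}) \in \DM_{\gm}(\tilde{\euW} \times_{BG} \euY, \Lambda)$. But the localization triangle $j_!(1_U) \to 1_{\tilde{\euW} \times_{BG} \euY} \to \Gamma_*(1_{\tilde{\euW}})$ for the open complement $U$ of $\Gamma(\tilde{\euW})$ exhibits $\Gamma_*(1_{\tilde{\euW}})$ as the cofiber of two manifest generators of $\DM_{\gm}$ (namely $1_{\tilde{\euW} \times_{BG} \euY}$ and $j_!(1_U) = M_{\tilde{\euW} \times_{BG} \euY}(U)$).

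The main obstacle is ensuring that the graph factorization works throughout the induction — specifically, that the equivariant resolution $\tilde{\euW}$ and all intermediate pieces remain quasi-projective over $BG$, so that $\pi_2$ is a smooth representable \emph{quasi-projective} morphism. This is precisely where the resolution property hypothesis on $\euX$ is essential: it propagates, together with the quasi-projectivity of $\euX$ over $BG$, through the projective completion $\bar{\euZ} \subset \mathbf{P}_\euX(E)$ and through the equivariant resolution of singularities, keeping every stack that appears in the induction quasi-projective in the relevant stacky sense.
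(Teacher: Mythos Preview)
Your overall strategy differs substantially from the paper's, and it contains a genuine gap at the graph-factorization step.

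The paper's proof does not use resolution of singularities or cdh descent at all. Instead, after reducing (via the equivariant Jouanolou trick) to the case where the source $\euZ_0$ of the smooth quasi-projective morphism $f_0:\euZ_0\to\euX$ is \emph{linearly fundamental}, it invokes the Artin algebraization result of \cite{AHLHR}[Thm.~1.3] to produce a Cartesian square
\[
\begin{tikzpicture}
		\node (TL) at (0 , 1.2) {$\euZ_0$};
		\node (TR) at (1.5, 1.2) {$ \euZ$};
		\node (BL) at (0, 0){$\euX$};
        \node (BR) at (1.5,0){$\euY$};
		\draw[right hook ->] (TL) -- (TR);
		\draw[->] (TL) -- (BL)node [midway, left] {$f_0$};
		\draw[->] (TR) -- (BR)node [midway, right] {$f$};
        \draw[right hook ->] (BL) -- (BR) node [midway, below] {$i$};
\end{tikzpicture}
\]
with $f$ smooth and representable over $\euY$. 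Localization on $\euZ$ then exhibits $i_*\mathcal{M}_\euX(\euZ_0)$ as the cofiber of two honest generators of $\DM_{\gm}(\euY,\Lambda)$. The entire content of the lemma is thus concentrated in this deformation-theoretic lifting of smooth morphisms from $\euX$ to $\euY$.

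Your argument tries to manufacture the needed smooth-over-$\euY$ object differently, via $\pi_2:\tilde{\euW}\times_{BG}\euY\to\euY$. But this fiber product is only defined if $\euY$ carries a structure morphism to $BG$, and the lemma does \emph{not} assume this: only $\euX$ has the resolution property, hence only $\euX$ comes with a presentation $[X/GL_n]\to BGL_n$. There is no reason for the ambient stack $\euY$ to lie over the same $BGL_n$ (and $\tilde{\euW}\to B$ is not representable once $n>0$, so taking the product over $B$ does not help either). In other words, your construction produces nothing smooth, representable, and quasi-projective over $\euY$; the bridge from $\euX$ to $\euY$ that the paper obtains from \cite{AHLHR} is precisely what is missing. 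The rest of your outline (projective completion, Noetherian induction, cdh descent) is internally consistent and would go through if $\euY$ were also a quotient by the same $G$, but that is a strictly stronger hypothesis than the one in the lemma.
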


\begin{proof}
Let $f_0: \euZ_0 \rightarrow \euX$ be a smooth quasi-projective representable morphism over $\euX$. First we assume that $\euZ_0$ is linearly fundamental in the sense of \cite{alper2023etale}[Def 2.7]. Now we apply \cite{AHLHR}[Thm. 1.3] so that we have a cartesian square
\begin{center}
	\begin{tikzpicture}
		\node (TL) at (0 , 1.5) {$\euZ_0$};
		\node (TR) at (1.5, 1.5) {$ \euZ$};
		\node (BL) at (0, 0){$\euX$};
        \node (BR) at (1.5,0){$\euY$};
		
		\draw[right hook ->] (TL) -- (TR) node [midway, above] {$i'$};
		\draw[->] (TL) -- (BL)node [midway, left] {$f_0$};
		\draw[->] (TR) -- (BR)node [midway, right] {$f$};
        \draw[right hook ->] (BL) -- (BR) node [midway, below] {$i$};
	\end{tikzpicture}
\end{center}
where $f$ is smooth and representable. Let $\euU := \euX - \euY$, and consider the localization diagram 

\begin{center}
	\begin{tikzpicture}
		\node (TL) at (0 , 1.5) {$\euZ_0$};
		\node (TM) at (1.5, 1.5) {$ \euZ$};
        \node (TR) at (3, 1.5) {$\euZ_\euU$};
		\node (BL) at (0, 0){$\euX$};
        \node (BM) at (1.5,0){$\euY$};
        \node (BR) at (3, 0) {$\euU$};
		
		\draw[right hook ->] (TL) -- (TM) node [midway, above] {$i'$};
		\draw[->] (TL) -- (BL) node [midway, left] {$f_0$};
		\draw[->] (TM) -- (BM) node [midway, right] {$f$};
        \draw[right hook ->] (BL) -- (BM) node [midway, below] {$i$};
        \draw [left hook ->] (BR) -- (BM) node [midway, below] {$j$};
        \draw [->] (TR) --  (BR) node [midway, right] {$f_\euU$};
        \draw [left hook ->] (TR) -- (TM) node [midway, above] {$j'$};
        
	\end{tikzpicture}
\end{center}
which gives rise to a cofiber sequence
\begin{center}
 $\mathcal{M}_\euY (\euZ_\euU) \rightarrow \mathcal{M}_\euY (\euZ) \rightarrow i_* \mathcal{M}_\euX (\euZ_0) \overset{[1]}{ \rightarrow .}$
\end{center}
Since both $\mathcal{M}_\euY (\euZ_\euU)$ and $\mathcal{M}_\euY (\euZ)$ are geometric this implies that $i_* \mathcal{M}_\euX (\euZ_0)$ is geometric which is what we wanted to show. In the general case, for a smooth quasi-projective morphism $f: \euZ_0  \rightarrow \euX$, since $\euX$ has the resolution property, we know that $\euZ_0 \simeq [Z_0 /GL_n]$ for some quasi-projective $Z_0$. Now applying the $G$-equivariant Jouanolou's trick \cite{HoyoisSix}[Prop. 2.20] we get an affine bundle
\begin{center}
   $ \pi: \tilde{\euZ}_0 \rightarrow \euZ_0$
\end{center}
where $\tilde{\euZ}_0$ is linearly fundamental. But since $\pi^*$ is fully-faithful the counit $\pi_\# \pi^* \overset{\simeq}{\rightarrow} \id$ is an equivalence and thus
\begin{center}
$\mathcal{M}_{\euX} (\tilde{\euZ}_0) \simeq f_{0, \#} \pi_{\#} \pi^* (1_{\euZ_0}) \simeq \mathcal{M}_{\euX} (\euZ_0).$
\end{center}
But since $\tilde{\euZ}_0$ is linearly fundamental by the previous case $i_* \mathcal{M}_{\euX} (\tilde{\euZ}_0)  \in \DM_{\gm} (\euY, \Lambda)$ and the claim now follows.
\end{proof}

\begin{Proposition}
\label{Proposition:closed immersions preserve geometric objects general case}
Let $\iota: \euX \hookrightarrow \euY$ be a closed immersion in $\nislocst$. Then the functor $i_*$ restricts to a functor 
\begin{center}
    $\iota_*: \DM_{\gm} (\euX, \Lambda) \rightarrow \DM_{\gm} (\euY, \Lambda).$
\end{center}
\end{Proposition}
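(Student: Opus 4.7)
The plan is to reduce the statement to \cref{Lemma:closed immersions preserve geometric objects special case} by Noetherian induction on the dimension of $\euX$, combined with the localization cofiber sequence. First I would reduce to checking the claim on a generating family: it suffices to show that $\iota_* \mathcal{M}_\euX(\euW)(q) \in \DM_{\gm}(\euY, \Lambda)$ whenever $\euW \to \euX$ is smooth, representable and quasi-projective. For the inductive step, I would choose a dense open substack $j \colon \euU \hookrightarrow \euX$ with the resolution property; the existence of such a $\euU$ follows from the fact that any finite type algebraic stack with affine stabilizers and affine diagonal generically looks like a quotient stack of the form $[U/GL_n]$. Writing $i \colon \euZ \hookrightarrow \euX$ for the reduced complementary closed substack, we have $\dim \euZ < \dim \euX$. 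Since $\iota$ is a closed immersion of Nis-loc stacks, the open substack $\euU \subset \euX$ extends uniquely to an open substack $j_\euY \colon \euV \hookrightarrow \euY$ whose base change along $\iota$ recovers $j$, and the induced map $\iota_\euU \colon \euU \hookrightarrow \euV$ is again a closed immersion.

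The heart of the argument then consists of applying $\iota_*$ to the localization cofiber sequence of \cref{Proposition:Localization}, which yields
\begin{equation*}
    \iota_* j_! j^* \mathcal{F} \longrightarrow \iota_* \mathcal{F} \longrightarrow \iota_* i_* i^* \mathcal{F}
\end{equation*}
for $\mathcal{F} = \mathcal{M}_\euX(\euW)(q)$. The right-hand term equals $(\iota \circ i)_*(i^* \mathcal{F})$, which by \cref{Lemma:DM_gm_is_stable_under_upper_*} and the inductive hypothesis, applied to the closed immersion $\iota \circ i \colon \euZ \hookrightarrow \euY$ whose source has strictly smaller dimension, lies in $\DM_{\gm}(\euY, \Lambda)$. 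For the left-hand term I would exploit the two factorizations $\iota \circ j = j_\euY \circ \iota_\euU$ of the locally closed immersion $\euU \hookrightarrow \euY$ to get the canonical identification $\iota_* j_! \simeq (j_\euY)_\# (\iota_\euU)_*$; this is immediate from functoriality of $(-)_!$ together with the identifications $\iota_! \simeq \iota_*$ and $(\iota_\euU)_! \simeq (\iota_\euU)_*$ (closed immersions are proper, by \cref{Proposition:Existence_of_exceptional_functors}) and $(j_\euY)_! \simeq (j_\euY)_\#$ (open immersions are smooth of relative dimension zero, so $j_\euY^! \simeq j_\euY^*$ by \cref{Proposition:purity}). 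Since $\euU$ has the resolution property, \cref{Lemma:closed immersions preserve geometric objects special case} applies to $\iota_\euU$ and gives $(\iota_\euU)_*(j^*\mathcal{F}) \in \DM_{\gm}(\euV, \Lambda)$, whereupon \cref{Lemma:DM_gm is stable under lower ! for smooth maps} applied to $j_\euY$ produces $(j_\euY)_\#(\iota_\euU)_*(j^*\mathcal{F}) \in \DM_{\gm}(\euY, \Lambda)$. Both outer terms of the cofiber sequence being geometric, so is the middle term $\iota_*\mathcal{F}$.

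The main obstacle is the generic-resolution-property input: while for the quotient stacks $[X/G]$ which are the focus of this paper the resolution property holds globally and the proposition already follows directly from \cref{Lemma:closed immersions preserve geometric objects special case}, in the generality of arbitrary Nis-loc stacks one must appeal to a stratification result for stacks with affine diagonal such as the generic quotient stack theorem. A secondary subtlety is that the extension of $\euU \subset \euX$ to an open $\euV \subset \euY$ uses that the closed immersion $\iota$ realizes $|\euX|$ as a closed subset of $|\euY|$, so the topology of $\euX$ is induced from $\euY$. The Noetherian induction terminates because either $\euU = \euX$ already, in which case the previous lemma applies directly, or $\dim\euZ < \dim\euX$ and we can continue.
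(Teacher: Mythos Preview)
Your argument is correct and follows the same overall strategy as the paper's proof: use the localization cofiber sequence to split off an open piece where \cref{Lemma:closed immersions preserve geometric objects special case} applies, and handle the closed complement by induction. The difference is in how the induction is organized. You stratify the source $\euX$ and induct on $\dim\euX$, extending a dense open of $\euX$ with the resolution property to an open of $\euY$; the paper instead stratifies the \emph{target} $\euY$ (via \cite{HallRydhStratification}) and inducts on the length of that stratification, pulling the open stratum back to $\euX$. The paper's choice has the minor advantage that the open stratum of $\euY$ automatically has the resolution property, and so does its closed substack $\euX_\euU$, so no density argument is needed and the induction parameter is a non-negative integer rather than a stack dimension. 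Your approach in turn makes the role of \cref{Lemma:dimension_of_complement_of_dense_open_drops} explicit, but you should be slightly careful that a \emph{dense} open with the resolution property exists when $\euX$ is reducible (it does, e.g.\ by applying the stratification componentwise, but this deserves a sentence). The identification $\iota_* j_! \simeq (j_\euY)_\#(\iota_\euU)_*$ via the two factorizations of the locally closed immersion is exactly the mechanism the paper uses as well, just read from the other side of the diagram.
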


\begin{proof}
By reduced invariance we may assume that $\euY$ is reduced. Thus, by \cite{HallRydhStratification} there is a stratification of $\euY$ be stacks with the resolution property. We proceed by induction on the length of the stratification. In the case that $\euY$ itself has the resolution property it follows that since $\iota :\euX \rightarrow \euY$ is a closed immersion, $\euX$ also has the resolution property and the claim follows by \cref{Lemma:closed immersions preserve geometric objects special case}. In the general case assume that we have a stratification of length $n$:
\begin{center}
    $\emptyset = \euY_0 \hookrightarrow \euY_1 \hookrightarrow \cdots \hookrightarrow \euY_n = \euY$
\end{center}
Let $\euU: = \euY - \euY_{n-1} $ and consider the localization square
\begin{center}
	\begin{tikzpicture}
		\node (TL) at (0 , 1.5) {$\euX_{n-1}$};
		\node (TM) at (1.5, 1.5) {$ \euX$};
        \node (TR) at (3, 1.5) {$\euX_\euU$};
		\node (BL) at (0, 0){$\euY_{n-1}$};
        \node (BM) at (1.5,0){$\euY$};
        \node (BR) at (3, 0) {$\euU.$};
		
		\draw[right hook ->] (TL) -- (TM) node [midway, above] {$i'$};
		\draw[->] (TL) -- (BL) node [midway, left] {$\iota_{n-1}$};
		\draw[->] (TM) -- (BM) node [midway, right] {$\iota$};
        \draw[right hook ->] (BL) -- (BM) node [midway, below] {$i$};
        \draw [left hook ->] (BR) -- (BM) node [midway, below] {$j$};
        \draw [->] (TR) --  (BR) node [midway, right] {$\iota_\euU$};
        \draw [left hook ->] (TR) -- (TM) node [midway, above] {$j'$};
	\end{tikzpicture}
 \end{center}
For $\mathcal{M} \in \DM_{\gm} (\euX, \Lambda)$ we have a  cofiber sequence
 \begin{center}
 $j_{\#} \iota_{\euU, *} j'^* (\mathcal{M}) \rightarrow \iota_* (\mathcal{M}) \rightarrow i_* \iota_{n-1, *} i'^* (\mathcal{M}) \overset{[1]}{ \rightarrow }.$
\end{center}
We now observe that $j_{\#} i_{\euU, *} j'^* (\mathcal{M}) \in \DM_{\gm} (\euY, \Lambda)$ by \cref{Lemma:DM_gm_is_stable_under_upper_*}, \cref{Lemma:closed immersions preserve geometric objects special case} and \cref{Lemma:DM_gm is stable under lower ! for smooth maps} and $i_* \iota_{n-1, *} i'^* (\mathcal{M}) \in \DM_{\gm} (\euY, \Lambda)$ by \cref{Lemma:DM_gm_is_stable_under_upper_*}, induction and, \cref{Lemma:closed immersions preserve geometric objects special case}. Thus $\iota_* \mathcal{M} \in \DM_{\gm} (\euY, \Lambda)$. 
\end{proof}

\begin{Lemma}
\label{Lemma:DM_gm is preserved by lower * for projective morphisms special case}
Let $f: \euX \rightarrow \euY $ be a projective morphism in $\nislocst$. Suppose that $\euY$ has the resolution property. Then the functor $f_*$ restricts to a functor
\begin{center}
$f_* : \DM_{\gm} (\euX, \Lambda) \rightarrow \DM_{\gm} (\euY, \Lambda)$. 
\end{center}
\end{Lemma}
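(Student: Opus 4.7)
The plan is to factor $f$ as a closed immersion followed by the structural morphism of a projective bundle, and then invoke the two preservation results already established: \cref{Proposition:closed immersions preserve geometric objects general case} for the closed immersion and \cref{Lemma:DM_gm is stable under lower-* for smooth proper representable morphisms} for the projective bundle projection.

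More precisely, since $\euY$ has the resolution property, the morphism $f: \euX \rightarrow \euY$ being projective means (as in the conventions of \cite{AHLHR}) that there exists a vector bundle $\mathcal{E}$ on $\euY$ together with a factorization
\begin{center}
\begin{tikzpicture}
    \node (A) at (0,1.2) {$\euX$};
    \node (B) at (2,1.2) {$\mathbf{P}(\mathcal{E})$};
    \node (C) at (2,0) {$\euY$};
    \draw[right hook ->] (A) -- (B) node [midway, above] {$i$};
    \draw[->] (B) -- (C) node [midway, right] {$p$};
    \draw[->] (A) -- (C) node [midway, below left] {$f$};
\end{tikzpicture}
\end{center}
where $i$ is a closed immersion and $p$ is the structural morphism of the projectivization. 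The projection $p$ is smooth, proper and representable, and since $\euY \in \nislocst$ and $p$ is representable, $\mathbf{P}(\mathcal{E})$ is again Nis-loc, so all of the six functor formalism applies.

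Now $f_* \simeq p_* \circ i_*$ by the usual composition property for pushforwards. Given $\mathcal{M} \in \DM_{\gm}(\euX, \Lambda)$, \cref{Proposition:closed immersions preserve geometric objects general case} applied to the closed immersion $i$ yields $i_*\mathcal{M} \in \DM_{\gm}(\mathbf{P}(\mathcal{E}), \Lambda)$, and then \cref{Lemma:DM_gm is stable under lower-* for smooth proper representable morphisms} applied to the smooth, proper, representable morphism $p$ gives $p_* i_* \mathcal{M} \in \DM_{\gm}(\euY, \Lambda)$. This completes the proof.

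The only subtle point is guaranteeing the existence of the factorization through $\mathbf{P}(\mathcal{E})$ for some vector bundle $\mathcal{E}$ on $\euY$; this is where the resolution property on $\euY$ is used, as it ensures that the relatively ample invertible sheaf witnessing projectivity of $f$ can be embedded into a globally generated vector bundle on $\euY$. Everything else is a direct invocation of the preservation results already proved earlier in this section.
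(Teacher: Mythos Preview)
Your proof is correct and follows essentially the same approach as the paper: factor $f$ as a closed immersion into a projective bundle followed by the smooth proper projection, then apply \cref{Proposition:closed immersions preserve geometric objects general case} and \cref{Lemma:DM_gm is stable under lower-* for smooth proper representable morphisms} in turn. The paper's version is slightly terser (writing the intermediate stack abstractly as $\euP$ rather than $\mathbf{P}(\mathcal{E})$), but the argument and the cited inputs are identical.
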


\begin{proof}
Since $\euY$ has the resolution property we may factor $f:\euX \rightarrow \euY$ as
\begin{center}
$\euX \overset{\iota}{\rightarrow} \euP \overset{p}{\rightarrow} \euY $
\end{center}
where $\iota$ is a closed immersion and $p$ is a smooth representable proper morphism.  The claim now follows from \cref{Proposition:closed immersions preserve geometric objects general case} and \cref{Lemma:DM_gm is stable under lower-* for smooth proper representable morphisms}.
\end{proof}

\begin{Proposition}
\label{Proposition:DM_gm is preserved by lower * for projective morphisms general case}
Let $f: \euX \rightarrow \euY $ be a projective morphism in $\nislocst$. Then the functor $f_*$ restricts to a functor
\begin{center}
$f_* : \DM_{\gm} (\euX, \Lambda) \rightarrow \DM_{\gm} (\euY, \Lambda)$. 
\end{center}
\end{Proposition}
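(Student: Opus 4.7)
The plan is to follow exactly the same induction pattern used in the analogous extension for closed immersions (\cref{Proposition:closed immersions preserve geometric objects general case}). By reduced invariance we may assume $\euY$ is reduced. By \cite{HallRydhStratification} we can stratify
\begin{center}
$\emptyset = \euY_0 \hookrightarrow \euY_1 \hookrightarrow \cdots \hookrightarrow \euY_n = \euY$
\end{center}
where each open stratum has the resolution property. We proceed by induction on the length $n$; the base case $n = 1$ is \cref{Lemma:DM_gm is preserved by lower * for projective morphisms special case}, since $\euY$ itself has the resolution property.

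For the inductive step, set $\euU := \euY - \euY_{n-1}$, which has the resolution property by construction of the stratification, and form the cartesian localization diagram
\begin{center}
    \begin{tikzpicture}
        \node (TL) at (0,1.5) {$\euX_{n-1}$};
        \node (TM) at (1.5,1.5) {$\euX$};
        \node (TR) at (3,1.5) {$\euX_\euU$};
        \node (BL) at (0,0) {$\euY_{n-1}$};
        \node (BM) at (1.5,0) {$\euY$};
        \node (BR) at (3,0) {$\euU.$};
        \draw[right hook->] (TL) -- (TM) node [midway, above] {$i'$};
        \draw[->] (TL) -- (BL) node [midway, left] {$f_{n-1}$};
        \draw[->] (TM) -- (BM) node [midway, right] {$f$};
        \draw[right hook->] (BL) -- (BM) node [midway, below] {$i$};
        \draw[left hook->] (BR) -- (BM) node [midway, below] {$j$};
        \draw[->] (TR) -- (BR) node [midway, right] {$f_\euU$};
        \draw[left hook->] (TR) -- (TM) node [midway, above] {$j'$};
    \end{tikzpicture}
\end{center}
Both $f_{n-1}$ and $f_\euU$ are projective (being base changes of $f$).

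Given $\mathcal{M} \in \DM_{\gm}(\euX, \Lambda)$, apply the localization triangle of \cref{Proposition:Localization} to $f_*\mathcal{M}$ on $\euY$. Using that $j$ is an open immersion (so $j^! \simeq j^*$ and correspondingly $j_! \simeq j_\#$) and that $f$ is proper and representable (so $f_! \simeq f_*$ by \cref{Proposition:Existence_of_exceptional_functors}(3), and similarly for $f_{n-1}$ and $f_\euU$), base change in \cref{Proposition:Existence_of_exceptional_functors}(2) yields
\begin{center}
$j^* f_* \mathcal{M} \simeq f_{\euU,*} j'^* \mathcal{M}, \qquad i^* f_* \mathcal{M} \simeq f_{n-1,*} i'^* \mathcal{M}.$
\end{center}
Hence the localization triangle becomes
\begin{center}
$j_\# f_{\euU,*} j'^* \mathcal{M} \rightarrow f_* \mathcal{M} \rightarrow i_* f_{n-1,*} i'^* \mathcal{M} \overset{[1]}{\rightarrow}.$
\end{center}

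It remains to verify that both outer terms lie in $\DM_{\gm}(\euY, \Lambda)$. The pullbacks $j'^* \mathcal{M}$ and $i'^* \mathcal{M}$ are geometric by \cref{Lemma:DM_gm_is_stable_under_upper_*}. Since $\euU$ has the resolution property, \cref{Lemma:DM_gm is preserved by lower * for projective morphisms special case} gives $f_{\euU,*} j'^* \mathcal{M} \in \DM_{\gm}(\euU, \Lambda)$, and then \cref{Lemma:DM_gm is stable under lower ! for smooth maps} gives $j_\# f_{\euU,*} j'^* \mathcal{M} \in \DM_{\gm}(\euY, \Lambda)$. By the inductive hypothesis applied to the projective morphism $f_{n-1}$ over $\euY_{n-1}$, we have $f_{n-1,*} i'^* \mathcal{M} \in \DM_{\gm}(\euY_{n-1}, \Lambda)$, and then \cref{Proposition:closed immersions preserve geometric objects general case} gives $i_* f_{n-1,*} i'^* \mathcal{M} \in \DM_{\gm}(\euY, \Lambda)$. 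Since $\DM_{\gm}(\euY, \Lambda)$ is a stable subcategory closed under extensions, we conclude $f_*\mathcal{M} \in \DM_{\gm}(\euY, \Lambda)$. The only subtle point to check carefully is the identification $j_! \simeq j_\#$ for the open immersion $j$, but this follows from purity (\cref{Proposition:purity}) together with the adjunctions, so no genuine obstacle arises.
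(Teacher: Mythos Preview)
Your proof is correct and follows essentially the same approach as the paper's own proof: reduce to the reduced case, stratify $\euY$ by stacks with the resolution property via \cite{HallRydhStratification}, and run induction on the length of the stratification using the localization triangle together with base change to reduce to \cref{Lemma:DM_gm is preserved by lower * for projective morphisms special case} on the open stratum and the inductive hypothesis on the closed complement. Your write-up is in fact slightly more detailed than the paper's, explicitly invoking \cref{Lemma:DM_gm_is_stable_under_upper_*}, \cref{Lemma:DM_gm is stable under lower ! for smooth maps}, and \cref{Proposition:closed immersions preserve geometric objects general case} at the appropriate points, whereas the paper leaves these implicit.
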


\begin{proof}
By reduced invariance we may assume that $\euY$ is reduced. Thus, by \cite{HallRydhStratification} there is a stratification of $\euY$ be stacks with the resolution property. We proceed by induction on the length of the stratification. In the case that $\euY$ has the resolution property we are done by \cref{Lemma:DM_gm is preserved by lower * for projective morphisms special case}. In the general case assume that we have a stratification of length $n$:
\begin{center}
    $\emptyset = \euY_0 \hookrightarrow \euY_1 \hookrightarrow \cdots \hookrightarrow \euY_n = \euY$
\end{center}
Let $\euU: = \euY - \euY_{n-1} $ and consider the localization square
\begin{center}
	\begin{tikzpicture}
		\node (TL) at (0 , 1.5) {$\euX_{n-1}$};
		\node (TM) at (1.5, 1.5) {$ \euX$};
        \node (TR) at (3, 1.5) {$\euX_\euU$};
		\node (BL) at (0, 0){$\euY_{n-1}$};
        \node (BM) at (1.5,0){$\euY$};
        \node (BR) at (3, 0) {$\euU.$};
		
		\draw[right hook ->] (TL) -- (TM) node [midway, above] {$i'$};
		\draw[->] (TL) -- (BL) node [midway, left] {$f_{n-1}$};
		\draw[->] (TM) -- (BM) node [midway, right] {$f$};
        \draw[right hook ->] (BL) -- (BM) node [midway, below] {$i$};
        \draw [left hook ->] (BR) -- (BM) node [midway, below] {$j$};
        \draw [->] (TR) --  (BR) node [midway, right] {$f_\euU$};
        \draw [left hook ->] (TR) -- (TM) node [midway, above] {$j'$};
	\end{tikzpicture}
 \end{center}
For $\mathcal{M} \in \DM_{\gm} (\euX, \Lambda)$ we have a  cofiber sequence
 \begin{center}
 $j_{\#} f_{\euU, *} j'^* (\mathcal{M}) \rightarrow f_* (\mathcal{M}) \rightarrow i_* f_{n-1, *} i'^* (\mathcal{M}) \overset{[1]}{ \rightarrow }.$
\end{center}
Now, $j_{\#} f_{\euU, *} j'^* (\mathcal{M}) \in \DM_{\gm} (\euY, \Lambda)$ by \cref{Lemma:DM_gm is preserved by lower * for projective morphisms special case} and $i_* f_{n-1, *} i'^* (\mathcal{M}) \in \DM_{\gm} (\euY, \Lambda)$ by induction. Thus we see that $f_* (\mathcal{M}) \in \DM_{\gm} (\euY, \Lambda)$ which is what we wanted to show.
\end{proof}

\begin{Remark}
With an appropriate form of Chow's lemma for stacks, one can extend \cref{Proposition:DM_gm is preserved by lower * for projective morphisms general case} to proper representable morphisms. 
\end{Remark}

\begin{Corollary}
\label{Corollary:qproj_lower_shriek_preserves_geometric_motives}
    Suppose $f: \euX \rightarrow \euY$ is quasi-projective morphism in $\nislocst$. Then the functor $f_!$ restricts to a functor
    \begin{center}
        $f_! : \DM_{\gm}(\euX, \Lambda) \rightarrow \DM_{\gm}(\euY, \Lambda)$. 
    \end{center}
\end{Corollary}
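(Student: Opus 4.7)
The plan is to factor a quasi-projective morphism $f : \euX \to \euY$ as $f \simeq p \circ j$, where $j : \euX \hookrightarrow \bar{\euX}$ is an open immersion and $p : \bar{\euX} \to \euY$ is projective, and then to analyze the two pieces separately using the results already established in the section. Since $f_!$ is functorial, we have $f_! \simeq p_! \circ j_!$, so it suffices to check that each factor preserves $\DM_{\gm}$.

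For the projective piece $p$, the proper base change equivalence $\alpha_p : p_! \overset{\simeq}{\to} p_*$ from \cref{Proposition:Existence_of_exceptional_functors}(3) reduces the claim to the statement that $p_*$ preserves geometric motives, which is exactly \cref{Proposition:DM_gm is preserved by lower * for projective morphisms general case}. For the open immersion $j$, I would argue that $j_! \simeq j_\#$: the open immersion is smooth and representable of relative dimension $0$, so purity (\cref{Proposition:purity}) gives $j^! \simeq j^*$, whence their left adjoints $j_!$ and $j_\#$ must be canonically equivalent. Then \cref{Lemma:DM_gm is stable under lower ! for smooth maps} applied to the smooth representable morphism $j$ shows that $j_\#$, and hence $j_!$, preserves $\DM_{\gm}$.

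There are essentially no obstacles here, since all the real work has been done in the preceding propositions; the only small subtlety is the comparison $j_! \simeq j_\#$ for the open immersion, which is a formal consequence of purity plus the uniqueness of adjoints. Putting the two factors together yields
\[
f_! \simeq p_* \circ j_\# : \DM_{\gm}(\euX, \Lambda) \longrightarrow \DM_{\gm}(\euY, \Lambda),
\]
as desired.
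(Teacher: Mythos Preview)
Your proof is correct and follows exactly the same approach as the paper: factor $f$ as an open immersion followed by a projective morphism, then invoke \cref{Lemma:DM_gm is stable under lower ! for smooth maps} for the open immersion (via $j_! \simeq j_\#$) and \cref{Proposition:DM_gm is preserved by lower * for projective morphisms general case} together with $p_! \simeq p_*$ for the projective part. The paper's proof is simply a terser version of what you wrote.
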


\begin{proof}
Since $f: \euX \rightarrow \euY$ is quasi-projective, we may factor $f$ as
\begin{center}
$\euX \overset{j}{\hookrightarrow} \EuScript{P} \overset{p}{\rightarrow }\euY$
\end{center}
where $j$ is an open immersion and $p$ is projective. The result now follows from \cref{Lemma:DM_gm is stable under lower ! for smooth maps} and \cref{Proposition:DM_gm is preserved by lower * for projective morphisms general case}.
\end{proof}

\subsection{Generation results for the derived category of geometric motives}

\begin{Definition} 
\label{Definition:Infty category of chow motives}
For an $\euX \in \nislocst$ we define the additive $\infty$-category of Chow motives
\begin{center}
$\CM(\euX, \Lambda) \subseteq \DM (\euX, \Lambda)$
\end{center}
to be smallest additive $\infty$-category generated by 
\begin{center}
$\{ f_! 1_\euZ(q)[2q]:\ \euZ \text{ smooth over } B ,\  f:\euZ \rightarrow \euX \text{ projective}, q \in \mathbf{Z} \}$.
\end{center}
and retracts thereof. 
\end{Definition}

In this section we wish to prove that $\CM (\euX)$ generates $\DM_{\gm} (\euX, \Lambda)$ under finite limits, colimits and retracts. i.e. that the smallest thick subcategory of $\DM_{\gm}(\euX, \Lambda)$ containing $\CM(\euX)$ is $\DM_{\gm}(\euX, \Lambda)$. For the readers familiar with \cite{CiDeg} we remark that our strategy was inspired by and will follow closely that of \cite{CiDeg}[4.4.3]. We start first with an elementary lemma about stacks, which will be useful for induction arguments.

\begin{Lemma}
\label{Lemma:dimension_of_complement_of_dense_open_drops}
Let $\euX$ be a finite type algebraic stack over $B$. Suppose that $\euU \subseteq \euX$ is a dense open substack of $\euX$. Let $\euZ$ denote the complement of $\euU$ in $\euX$ Then $\dim(\euZ) < \dim(\euX)$. 
\end{Lemma}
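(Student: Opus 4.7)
The plan is to reduce the statement to the classical analogue for finite type schemes via a smooth atlas. Recall that for a finite type algebraic stack $\euX$ over $B$ one can define $\dim(\euX) := \dim(X) - d$ where $u \colon X \to \euX$ is any smooth atlas of pure relative dimension $d$; this is independent of the atlas because smooth morphisms of finite type schemes shift dimension by the relative dimension on each component.

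First I would choose a smooth atlas $u \colon X \to \euX$ of pure relative dimension $d$ (which exists after possibly replacing $X$ by a suitable open covering by pieces of constant relative dimension). Form the cartesian pullbacks $U := u^{-1}(\euU) \subseteq X$ and $Z := u^{-1}(\euZ) \subseteq X$, so that $U$ is open in $X$, $Z$ is its closed complement, and $u|_Z \colon Z \to \euZ$ is a smooth atlas of $\euZ$ of the same relative dimension $d$. Since $u$ is smooth, hence flat and open, and $\euU \subseteq \euX$ is dense, the preimage $U \subseteq X$ is also dense.

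Next I would invoke the classical scheme-theoretic fact: if $U$ is a dense open in a finite type $B$-scheme $X$ with complement $Z$, then $\dim(Z) < \dim(X)$. This is because density of $U$ means that $U$ contains every generic point of every irreducible component $X_i$ of $X$, so for each $i$ the closed subset $Z \cap X_i$ is a proper closed subset of the irreducible $X_i$ and hence satisfies $\dim(Z \cap X_i) < \dim(X_i) \leq \dim(X)$; taking the maximum over $i$ gives $\dim(Z) < \dim(X)$.

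Finally, using the atlas-based definition of dimension, I would conclude
\[
\dim(\euZ) \;=\; \dim(Z) - d \;<\; \dim(X) - d \;=\; \dim(\euX),
\]
which is the desired inequality. The only mild technical point is ensuring an atlas of constant relative dimension exists; this is standard since smooth morphisms are locally of constant relative dimension, and one can decompose $X$ into the clopen pieces of each relative dimension without affecting either density of $U$ or the dimension computation.
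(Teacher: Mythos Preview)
Your argument is correct and follows essentially the same approach as the paper: pull back along a smooth atlas $X\to\euX$, observe that density of $\euU$ forces density of $U$ in $X$, invoke the scheme-level inequality $\dim(Z)<\dim(X)$, and then subtract off the relative dimension of the atlas. The only implementation difference is that the paper does not assume the atlas has constant relative dimension; instead it argues pointwise, using the formula $\dim_z(\euX)=\dim_{\tilde z}(X)-\dim(R_z)$ where $R=X\times_\euX X$, together with the observation that $R_{\euZ,z}\simeq R_{\euX,z}$, to obtain $\dim_z(\euZ)<\dim_z(\euX)$ for every $z\in|\euZ|$. Your global version is slightly cleaner to read, at the cost of the mild reduction to clopen pieces of constant relative dimension that you sketch at the end; the paper's pointwise version avoids that reduction but uses the slightly heavier groupoid formula.
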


\begin{proof}
Consider the diagram of cartesian squares where $\Pi$ is a smooth cover
\begin{center}
	\begin{tikzpicture}
	   \node (TL) at (0 , 1.5) {$ U$};
	   \node (TM) at (2, 1.5) {$X$};
        \node (TR) at (4, 1.5) {$Z$};
	   \node (BL) at (0, 0) {$\euU$};
       \node (BM) at (2,0) {$\euX$};
       \node (BR) at (4, 0) {$\euZ.$};

       \draw[->] (TL) -- (TM) node [midway, above] {$j'$};
	   \draw[->] (TR) -- (TM) node [midway, above] {$i'$};
	   \draw[->] (BL) -- (BM) node [midway, below] {$j$};
        \draw[->] (BR) -- (BM) node [midway, below] {$i$};
	   \draw[->] (TM) -- (BM) node [midway, right] {$\pi$};
	   \draw[->] (TL) -- (BL)node [midway, left] {$\pi_\euU$};
	   \draw[->] (TR) -- (BR)node [midway, right] {$\pi_\euZ$};
\end{tikzpicture}
\end{center}
It follows that since the map $\pi$ is a continuous and surjective on underlying topological spaces that $U$ is a dense open subscheme of $X$ with complement $Z$. 

We will be finished if we show for each $z \in |\euZ|$ that 
\begin{center}
    $\dim_z (\euZ) < \dim_z (\euX)$ 
\end{center}

We are free to pick any lift of the point $z$ to $Z$. In particular there exists a lift $\tilde{z}$ such that $\dim_{\tilde{z}} (Z) = \dim(Z)$ and we have that 
\begin{center}
    $\dim_{z} (\euZ) = \dim(Z) - \dim(R_{{\euZ},z}).$
\end{center}
Similarly we may pick any lift of $z$ in $X$ such that
\begin{center}
    $\dim_{z} (\euX) = \dim(X) - \dim(R_{{\euX},z}).$
\end{center}
For each $ z \in |\euZ|$ we have a canonical equivlance $ R_{\euZ,z} \simeq R_{z}$ where $ R : = X \times_\euX X$ and $R_\euZ$ is the restriction to $\euZ$. But since $U$ is dense in $X$ with complement $Z$ we have that $\dim(Z) < \dim(X)$ and hence
\begin{center}
    $\dim_z (\euZ) < \dim_z (\euX)$,
\end{center}
which is what we wanted to show.
\end{proof}

\begin{Proposition}
\label{Proposition:seperated_lower_shriek_preserves_DM_gm}
Suppose that $f:\euX \rightarrow \euY$ is representable and separated and $\euY$ has the resolution property. Then $f_!$ restricts to a functor
\begin{center}
   $ f_!: \DM_{\gm}(\euX, \Lambda) \rightarrow \DM_{\gm} (\euY, \Lambda)$
\end{center}
\end{Proposition}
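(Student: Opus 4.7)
My plan is to prove the proposition by induction on $\dim \euX$, reducing the general separated representable case to the quasi-projective case already handled in \cref{Corollary:qproj_lower_shriek_preserves_geometric_motives}. The base case $\dim \euX = 0$ is immediate: representability together with the finite type hypothesis forces $f$ to be finite, hence quasi-projective, so the cited corollary applies directly.

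For the inductive step, the key geometric input I would need is the existence of a dense open substack $j:\euU \hookrightarrow \euX$ such that $f \circ j : \euU \to \euY$ is quasi-projective. To produce it, I would exploit the resolution property of $\euY$: writing $\euY \simeq [Y/GL_n]$ for a quasi-affine scheme $Y$ and using representability of $f$, one has $\euX \simeq [X/GL_n]$ where $X$ is a separated algebraic space of finite type over $Y$. Applying a $GL_n$-equivariant form of Chow's lemma for algebraic spaces, for instance by intersecting the $GL_n$-translates of the dense open locus over which a Chow modification $\tilde X \to X$ is an isomorphism, would produce a $GL_n$-stable dense open $U \subseteq X$ such that $U \to Y$ is quasi-projective, so that $\euU := [U/GL_n]$ is the desired substack. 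I expect this equivariance-laden Chow-type statement to be the main obstacle of the proof, since it is the only place in the argument where the resolution property of $\euY$ enters non-formally.

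Once $\euU$ is in hand, let $i:\euZ \hookrightarrow \euX$ denote the closed complement of $\euU$. By \cref{Lemma:dimension_of_complement_of_dense_open_drops} one has $\dim\euZ < \dim\euX$. Applying the exact functor $f_!$ to the localization cofiber sequence of \cref{Proposition:Localization}, and using $i_! \simeq i_*$ for the closed immersion together with the composition law $f_! \circ g_! \simeq (f\circ g)_!$, we obtain for each $\mathcal{M} \in \DM_{\gm}(\euX, \Lambda)$ a cofiber sequence
\begin{equation*}
    (f \circ j)_!(j^*\mathcal{M}) \to f_!\mathcal{M} \to (f \circ i)_!(i^*\mathcal{M}).
\end{equation*}
By \cref{Lemma:DM_gm_is_stable_under_upper_*} both $j^*\mathcal{M}$ and $i^*\mathcal{M}$ are geometric. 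The morphism $f \circ j$ is quasi-projective, so \cref{Corollary:qproj_lower_shriek_preserves_geometric_motives} places the left-hand term in $\DM_{\gm}(\euY,\Lambda)$; the morphism $f \circ i$ is separated and representable with source of strictly smaller dimension, so the inductive hypothesis places the right-hand term in $\DM_{\gm}(\euY,\Lambda)$. Since $\DM_{\gm}(\euY, \Lambda)$ is a thick subcategory of $\DM(\euY, \Lambda)$, this would force $f_!\mathcal{M} \in \DM_{\gm}(\euY, \Lambda)$, completing the induction.
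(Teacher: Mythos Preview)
Your overall strategy matches the paper's: both write $\euX \simeq [X/GL_n]$ using the resolution property of $\euY$, isolate a piece on which the map to $\euY$ becomes quasi-projective, invoke \cref{Corollary:qproj_lower_shriek_preserves_geometric_motives} there, and handle the complement by induction via the localization triangle. The paper inducts on the length of a stratification of $\euX$ by stacks quasi-projective over $BGL_n$, supplied directly by \cite{HallRydhStratification}[Prop.~2.6]; you induct on dimension and propose to manufacture the quasi-projective dense open yourself via an equivariant Chow's lemma.

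Two points need repair. First, your base case is misstated: $\dim \euX = 0$ does not force $f$ to be finite, since $\dim [X/GL_n] = \dim X - n^2$ can vanish with $X$ of large dimension (take $\euY = BGL_n$ and any $n^2$-dimensional $X$). You should induct on $\dim X$ instead, or run strong induction with the empty stack as base. Second, and more seriously, the construction you float for the $GL_n$-stable quasi-projective open---intersecting all $GL_n$-translates of the open locus coming from ordinary Chow's lemma---fails for positive-dimensional groups: already for $\mathbf{G}_m$ acting on $\mathbf{A}^1$ by scaling with $U = \mathbf{A}^1 \setminus \{1\}$ one gets $\bigcap_g gU = \{0\}$, which is not open. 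You correctly flag this step as the crux; the clean way to supply it is precisely the Hall--Rydh stratification the paper cites, whose open stratum is exactly the dense $GL_n$-stable quasi-projective $\euU$ you are after. Once that input is granted, your argument and the paper's are the same.
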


\begin{proof}
Since $\euY$ has the resolution property it is of the form $[Y/GL_n]$ where $Y$ is quasi-affine. Since $ f:\euX \rightarrow \euY$ is representable it follows that $\euX \simeq [X/GL_n]$ for $X$ an algebraic space.  By reduced invariance we may assume that $\euX$ is reduced.

The stack $\euX$ has affine stabilizers is finite type and quasi-separated, and by \cite{HallRydhStratification}[Prop. 2.6] there exists a stratification of $\euX$ by global quotient stacks which are quasi-projective over $BGL_n$. We will use induction on the length of the stratification. In the trivial case when $ \euX$ is quasi-projective over $BGL_n$ then $f:\euX \rightarrow \euY$ is quasi-projective and we are done by \cref{Corollary:qproj_lower_shriek_preserves_geometric_motives}. 

For a stratification of length $n$
\begin{center}
  $ \emptyset= \euX_0 \hookrightarrow \euX_1 \hookrightarrow \cdots \hookrightarrow \euX_n = \euX$
\end{center}
we consider the diagram
\begin{center}
   $\euX_{n-1} \overset{i}{\hookrightarrow} \euX \overset{j}{\hookleftarrow} \euU$
\end{center}
of stacks over $\euY$. Since $\euU$ and $\euY$ are quasi-projective over $BGL_n$, it follows that the induced map $ f|_{\euU}: \euU \rightarrow \euY$ is quasi-projective. Let $\mathcal{M} \in \DM_{\gm} (\euX, \Lambda)$ and consider the localization triangle induced by \cref{Proposition:Localization}
\begin{center}
    $j_! j^! (\mathcal{M}) \rightarrow \mathcal{M} \rightarrow i_* i^* \mathcal{M} \overset{[1]}{\rightarrow}.$
\end{center}
Since $f_!$ is an exact functor between stable $\infty$-categories we get a cofiber sequence

 \begin{center}
    $f_{\euU ,!} j^* (\mathcal{M}) \rightarrow f_! \mathcal{M} \rightarrow f_{n-1, !}  i^* \mathcal{M} \overset{[1]}{\rightarrow}.$
\end{center}

Now,\cref{Corollary:qproj_lower_shriek_preserves_geometric_motives} implies that $f_{\euU ,!} j^* (\mathcal{M}) \in \DM_{gm} (\euY, \Lambda)$ and induction implies that $f_{n-1, !}  i^* \mathcal{M} \in \DM_{\gm} (\euY, \Lambda)$. Thus $f_! (\mathcal{M}) \in \DM_{\gm} (\euY, \Lambda)$ finishing the argument. 
\end{proof} 

Recall, that a full subcategory $\mathscr{D}$ of a stable $\infty$-category $\mathscr{C}$ is called \emph{thick} if it is closed under taking retracts (see \cite{higher_topos_theory} [4.4.5] for a discussion on retracts and idempotents in the setting of $\infty$-categories). 

\begin{Theorem}
\label{Theorem:DM_gm_is_generated_by_projective_morphisms}
Let $\euX$ be a finite type Nis-loc stack with affine stabilizers. The category $\DM_{\gm}(\euX)$ is the smallest thick stable $\infty$-subcategory of $\DM (\euX, \Lambda)$ generated by the collection of objects
\begin{center}
    $\mathscr{P}(\euX):= \{ f_! (1_{\euX'} (n))\ | \  f: \euX' \rightarrow \euX \text{ is projective and }  n \in \mathbf{Z} \}$. 
\end{center}
\end{Theorem}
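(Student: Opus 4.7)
The plan is to establish both inclusions separately, the easy direction being immediate from the six-functor preservation results already proved in this section, and the hard direction reducing, via compactification plus localization, to two projective pushforwards of units.

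For $\mathscr{P}(\euX) \subseteq \DM_{\gm}(\euX,\Lambda)$, first observe that for any projective $f: \euX' \to \euX$ the object $1_{\euX'}(n) = M_{\euX'}(\euX')(n)$ lies in $\DM_{\gm}(\euX',\Lambda)$, because the identity morphism of $\euX'$ is trivially smooth, representable and quasi-projective. Combining \cref{Proposition:DM_gm is preserved by lower * for projective morphisms general case} with the equivalence $f_! \simeq f_*$ from \cref{Proposition:Existence_of_exceptional_functors}(3), we conclude $f_!(1_{\euX'}(n)) \in \DM_{\gm}(\euX,\Lambda)$.

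For the reverse inclusion, it suffices to show that each generator $M_\euX(\euZ)(q) = f_\# 1_\euZ(q)$ of $\DM_{\gm}(\euX,\Lambda)$ lies in the thick subcategory $\langle \mathscr{P}(\euX)\rangle$, where $f: \euZ \to \euX$ is smooth, representable and quasi-projective of some relative dimension $d$. Purity (\cref{Proposition:purity}(1)) together with passage to left adjoints gives $f_\# 1_\euZ(q) \simeq f_!(1_\euZ)(q+d)[2d]$; since $\mathscr{P}(\euX)$ is stable under Tate twists (by the projection formula \cref{Proposition:Existence_of_exceptional_functors}(1)) and every thick subcategory of a stable $\infty$-category is stable under shifts, it is enough to prove $f_! 1_\euZ \in \langle \mathscr{P}(\euX)\rangle$. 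Using quasi-projectivity of $f$, factor it as $\euZ \xrightarrow{j} \bar\euZ \xrightarrow{\bar f} \euX$ with $j$ an open immersion and $\bar f$ projective, and let $i: \euY \hookrightarrow \bar\euZ$ denote the complementary closed immersion; both $\bar\euZ$ and $\euY$ are Nis-loc stacks with affine stabilizers since they are representable over $\euX$. Applying the exact functor $\bar f_!$ to the localization cofiber sequence
\[
j_! 1_\euZ \longrightarrow 1_{\bar\euZ} \longrightarrow i_* 1_\euY
\]
from \cref{Proposition:Localization} produces
\[
f_! 1_\euZ \longrightarrow \bar f_! 1_{\bar\euZ} \longrightarrow (\bar f \circ i)_! 1_\euY.
\]
The two right-hand terms belong to $\mathscr{P}(\euX)$: the middle because $\bar f$ is projective, and the right because $\bar f \circ i$ is the composition of a closed immersion with a projective morphism, hence projective. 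Therefore $f_! 1_\euZ$, the fiber of a map in $\langle \mathscr{P}(\euX)\rangle$, itself lies in $\langle \mathscr{P}(\euX)\rangle$.

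The crucial conceptual point—and what makes this argument genuinely short—is that the definition of $\mathscr{P}(\euX)$ places no smoothness or regularity requirement on the source of the projective morphism, so we may use the (possibly very singular) compactification $\bar\euZ$ directly and never need to resolve its singularities. Essentially all the real work has already been carried out in \cref{Proposition:DM_gm is preserved by lower * for projective morphisms general case} (which absorbs the Hall--Rydh stratification by stacks with the resolution property) and in the purity/localization packages of \cref{Proposition:purity,Proposition:Localization}; once these are in hand, the compactification and six-functor calculus above finish the proof without further obstacle. The only subtlety worth flagging is verifying that $\bar\euZ$ and $\euY$ inherit the Nis-loc hypothesis, which follows from representability of $\bar f$.
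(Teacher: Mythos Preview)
Your argument is correct, and it is genuinely shorter than the route taken in the paper. The paper proves the hard inclusion by stratifying the \emph{base} $\euX$ by substacks with the resolution property (via Hall--Rydh), inducts on the length of that stratification, and in the base case invokes \cref{Proposition:seperated_lower_shriek_preserves_DM_gm}. By contrast, you compactify the quasi-projective morphism $f:\euZ\to\euX$ directly and run localization on the source $\bar\euZ$, so a single cofiber sequence over $\euX$ already exhibits $f_!1_\euZ$ as the fiber of a map between two elements of $\mathscr{P}(\euX)$. This bypasses both the Hall--Rydh stratification and the appeal to \cref{Proposition:seperated_lower_shriek_preserves_DM_gm} entirely; as you correctly emphasize, what makes this possible is that $\mathscr{P}(\euX)$ imposes no regularity on the source, so the singular compactification $\bar\euZ$ and its boundary $\euY$ are admissible without further work. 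The paper's approach would become necessary only if one weakened the generators of $\DM_{\gm}$ to smooth representable morphisms that are not assumed quasi-projective, since then no global compactification is available and one is forced to stratify.

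One cosmetic point: when you write ``of some relative dimension $d$'' you are implicitly assuming $\euZ$ is equidimensional over $\euX$; strictly speaking one should decompose $\euZ$ into clopen pieces of constant relative dimension before applying purity, but this is harmless.
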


\begin{proof}
Let $\DM_{\text{proj}}(\euX)$ be the smallest thick subcategory generated by $\mathscr{P}(\euX)$. By \cref{Proposition:DM_gm is preserved by lower * for projective morphisms general case} it follows that $\DM_{\text{proj}}(\euX) \subset \DM_{\gm}(\euX)$. So we prove the reverse inclusion. 

For any quasi-projective smooth morphism $ f: \euX' \rightarrow \euX$ it follows from purity that  $ f_\#$ agrees with $ f_!$ up to a Tate twist. Thus it is enough to prove that   $ f_{!} 1_{\euX'} $ for any such $f$ is contained in $\DM_{\text{proj}}(\euX)$.

When $\euX$ has the resolution property we are finished by \cref{Proposition:seperated_lower_shriek_preserves_DM_gm}. In the general case, we can argue by induction on the length of the stratification of $\euX$ by stacks with the resolution property. Note that we may assume that $\euX$ is reduced by reduced invariance. That is for a length $n$ stratification
\begin{center}
$ \emptyset= \euX_0 \hookrightarrow \euX_1 \hookrightarrow \cdots \hookrightarrow \euX_n = \euX$
\end{center}
we consider the diagram
\begin{center}
\begin{tikzpicture}
	   \node (TL) at (0 , 1.5) {$ \euX'_{n-1}$};
	   \node (TM) at (1.8, 1.5) {$\euX'$};
        \node (TR) at (3.6, 1.5) {$\euU'$};
	   \node (BL) at (0, 0) {$\euX_{n-1}$};
       \node (BM) at (1.8,0) {$\euX$};
       \node (BR) at (3.6, 0) {$\euU$};

       \draw[->] (TL) -- (TM) node [midway, above] {$i'$};
	   \draw[->] (TR) -- (TM) node [midway, above] {$j'$};
	   \draw[->] (BL) -- (BM) node [midway, below] {$i$};
        \draw[->] (BR) -- (BM) node [midway, below] {$j$};
	   \draw[->] (TM) -- (BM) node [midway, right] {$f$};
	   \draw[->] (TL) -- (BL)node [midway, left] {$f_{n-1}$};
	   \draw[->] (TR) -- (BR)node [midway, right] {$f_\euU$};
\end{tikzpicture}
\end{center}
of cartesian squares where $i$ is a closed immersion and $j$ is an open immersion. By considering the localization triangle from \cref{Proposition:Localization}
\begin{center}
    $j_! j^! f_! (1_{\euX'}) \rightarrow f_! (1_{\euX'}) \rightarrow i_* i^* f_! (1_{\euX'})  $
\end{center}
it follows from the base change isomorphisms of \cref{Proposition:Existence_of_exceptional_functors}, \cref{Corollary:qproj_lower_shriek_preserves_geometric_motives}, and \cref{Proposition:DM_gm is preserved by lower * for projective morphisms general case} that both $j_! j^! f_! (1_{X'})$ and $i_* i^* f_! (1_{X'})$ are contained in $\DM_{\gm}(\euX, \Lambda)$. Hence we see that $f_! (1_{\euX'})$ is contained in $\DM_{\gm}(\euX, \Lambda)$.
\end{proof}

The next result says that when $\euX$ is a Nis-loc stack which is quasi-projective over $BG$ then $\DM_{\gm} (\euX, \Lambda)$ is the thick closure of $\CM(\euX, \Lambda)$. 

\begin{Theorem}
\label{Theorem:Generation_of_DM_gm_by_chow}
Suppose that $\euX = [X/G]$ where $X$ is a quasi-projective scheme over $B$ and $G$ is affine algebraic group. Then the category $ \CM(\euX, \Lambda)$ generates $\DM_{\gm} (\euX, \Lambda)$ under finite limits, colimits and retracts. 
\end{Theorem}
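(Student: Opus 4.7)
The plan is to invoke Theorem \ref{Theorem:DM_gm_is_generated_by_projective_morphisms}, which reduces the problem to showing that for every projective morphism $g : \euX' \to \euX$ and every $n \in \mathbf{Z}$ the object $g_!\, 1_{\euX'}(n)$ lies in the thick closure of $\CM(\euX, \Lambda)$. Since $\CM(\euX,\Lambda)$ is closed under Tate twists, the $(n)$ is irrelevant. The gap between the generators of Theorem \ref{Theorem:DM_gm_is_generated_by_projective_morphisms} and those of $\CM(\euX, \Lambda)$ is precisely the smoothness of the source, which will be bridged by $G$-equivariant resolution of singularities (Theorem \ref{Theorem:Equivariant_Resultion_of_singularities}).

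I would proceed by induction on $d = \dim(\euX')$. By reduced invariance of $\DM$ we may assume $\euX'$ is reduced; since $g$ is projective and $\euX = [X/G]$ with $X$ quasi-projective, we have $\euX' = [X'/G]$ with $X'$ reduced and quasi-projective. The base case $d = 0$ is immediate: such an $\euX'$ is already smooth over $k$ (its atlas is a reduced zero-dimensional scheme over an algebraically closed field, and smoothness descends along the smooth cover $X' \to \euX'$), so $g_!\, 1_{\euX'} \in \CM(\euX, \Lambda)$ by definition. For the inductive step, apply Corollary \ref{Corollary:Descent_for_resolution_of_singularities} to produce a $cdh$-square
\begin{center}
\begin{tikzpicture}
\node (TL) at (0,1.2) {$\euT$};
\node (TR) at (1.8,1.2) {$\tilde{\euX}'$};
\node (BL) at (0,0) {$\euZ$};
\node (BR) at (1.8,0) {$\euX'$};
\draw[->] (TL) -- (TR) node[midway, above] {$k$};
\draw[right hook->] (BL) -- (BR) node[midway,below] {$i$};
\draw[->] (TL) -- (BL) node[midway,left] {$a'$};
\draw[->] (TR) -- (BR) node[midway,right] {$p$};
\end{tikzpicture}
\end{center}
with $\tilde{\euX}'$ smooth over $k$, $i$ a closed immersion, and $p$ projective birational (an isomorphism over the dense open $\euU := \euX' \setminus \euZ$). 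The corollary tells us that the associated square \eqref{Equation:cdh_square} for $\mathcal{M} = 1_{\euX'}$ is cartesian in the stable $\infty$-category $\DM(\euX', \Lambda)$, hence also cocartesian, producing a Mayer--Vietoris cofiber sequence
\begin{center}
$1_{\euX'} \to p_*\, 1_{\tilde{\euX}'} \oplus i_*\, 1_\euZ \to a_*\, 1_\euT \overset{[1]}{\to}$
\end{center}
where $a := p k = i a'$.

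Applying the exact functor $g_!$ and using $g_! \circ h_* \simeq (gh)_!$ for $h$ proper, I obtain
\begin{center}
$g_!\, 1_{\euX'} \to (gp)_!\, 1_{\tilde{\euX}'} \oplus (gi)_!\, 1_\euZ \to (ga)_!\, 1_\euT \overset{[1]}{\to}$
\end{center}
in which $gp,\ gi,\ ga$ are all projective morphisms to $\euX$. The first summand lies in $\CM(\euX,\Lambda)$ because $\tilde{\euX}'$ is smooth over $k$. For the other two, note that $\dim(\euZ) < d$ by Lemma \ref{Lemma:dimension_of_complement_of_dense_open_drops}, and since $p$ is an isomorphism over the dense open $\euU$, the subset $\euT = p^{-1}(\euZ)$ is the complement of the dense open $p^{-1}(\euU) \subset \tilde{\euX}'$, so the same lemma gives $\dim(\euT) < \dim(\tilde{\euX}') = d$. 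Moreover $\euZ$ and $\euT$ are closed $G$-invariant subschemes of the quasi-projective $G$-schemes $X'$ and $\tilde{X}'$, so they are again quotients of reduced quasi-projective schemes by $G$ and the inductive hypothesis applies, placing $(gi)_!\, 1_\euZ$ and $(ga)_!\, 1_\euT$ in the thick closure of $\CM(\euX, \Lambda)$. Closure under fibers and finite direct sums finishes the step. The main obstacle is ensuring that both $\euZ$ and the exceptional locus $\euT$ have strictly smaller dimension; this relies on the birationality of the equivariant resolution together with the density of the smooth locus of a reduced stack, i.e., on the characteristic-zero hypothesis.
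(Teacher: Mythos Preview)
Your strategy is the same as the paper's—reduce via Theorem~\ref{Theorem:DM_gm_is_generated_by_projective_morphisms} to objects $g_! 1_{\euX'}$ with $g$ projective, then resolve $\euX'$ equivariantly and run an induction on dimension using the $cdh$ cofiber sequence—and your treatment of the inductive step is actually more careful than the paper's (you explicitly bound $\dim(\euT)$, which the paper leaves implicit).

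There is, however, a genuine slip in your choice of induction variable. You induct on $d = \dim(\euX')$ and declare the base case to be $d = 0$, asserting that then ``its atlas is a reduced zero-dimensional scheme.'' But $\dim(\euX') = \dim(X') - \dim(G)$, so $\dim(\euX') = 0$ means $\dim(X') = \dim(G)$, which is not zero when $G$ has positive dimension; there is no reason for such an $X'$ to be smooth. Worse, $\dim(\euX')$ can be negative, so $d = 0$ is not even the bottom of the induction. The paper avoids this by inducting on the \emph{relative dimension of $\euX' \to BG$}, i.e.\ on $\dim(X')$; the base case $\dim(X') = 0$ genuinely forces $X'$ to be a finite reduced $k$-scheme and hence smooth. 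Your inductive step goes through verbatim with this change, since Lemma~\ref{Lemma:dimension_of_complement_of_dense_open_drops} applied to $\euZ \subset \euX'$ and $\euT \subset \tilde{\euX}'$ gives strict inequalities on stack dimensions, and adding $\dim(G)$ to both sides converts these into strict inequalities on the $\dim$ of the atlases. So the fix is a one-word change in the induction parameter.
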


\begin{proof}

Let $\DM_{\CM} (\euX, \Lambda)$ be the smallest thick subcategory of $\DM (\euX, \Lambda)$ which contains the category $\CM(\euX, \Lambda)$. We must show that  $\DM_{\CM} (\euX, \Lambda)$ is precisely all of  $\DM_{\gm} (\euX, \Lambda)$. To this end by \cref{Theorem:DM_gm_is_generated_by_projective_morphisms}, it will be enough to show that $\mathscr{P}(\euX)$ is contained in in $\DM_{\CM} (\euX, \Lambda)$. 

Consider a projective morphism $f: \euX' \rightarrow \euX$. From our hypothesis on $\euX$  we may take $ \euX' \simeq [X'/G]$ where $X'$ is projective over $X$, moreover without loss of generality we may assume that $\euX'$ is reduced. We now proceed by induction on the relative dimension of $\euX' \rightarrow BG$ the claim is clear when relative dimension is $0$, so we assume it holds for some $n >0$ and consider $\euX' \rightarrow BG$ with relative dimension $n+1$.

 We may apply equivariant resolutions of singularities over $k$, \cref{Theorem:Equivariant_Resultion_of_singularities}, to $ X'$. Thus after taking stack quotients by $G$ we arrive at a projective birational morphism $ [\tilde{X'} / G] \rightarrow [X/G]$. The stack $[\tilde{X'} / G]$ is smooth over $k$. Now we consider the cartesian squares
\begin{center}
	\begin{tikzpicture}
		\node (TL) at (0 , 1.5) {$ \euU' $};
		\node (TM) at (2, 1.5) {$ [\tilde{X'} / G] $};
        \node (TR) at (4,1.5) {$\euZ'$};
		\node (BL) at (0, 0) {$\euU$};
        \node (BM) at (2, 0) {$[X'/G] $};
        \node (BR) at (4,0) {$\euZ$};
		
		\draw[left hook->] (BR) -- (BM);
        \draw[left hook->] (TR) -- (TM);
        \draw[->] (TL) -- (BL);
        \draw[->] (TM) -- (BM);
		\draw[->] (TL) -- (TM);
        \draw[->] (BL) -- (BM);
		\draw[->] (TR) -- (BR);
	\end{tikzpicture}
\end{center}
where $\euU$ is dense open, the right hand side is a $cdh$-square. In particular since the relative dimension of $ \euZ \rightarrow BG$ is strictly less then the relative dimension of $\euX'$ over $BG$ and we may apply induction hypothesis  together with the cofiber sequence induced by \cref{Corollary:Descent_for_resolution_of_singularities}.
\end{proof}

\section{Mapping spectra and Chow groups}
\label{Section:Mapping spectra and chow motives}

 In this section we will study the mapping spectra in $\DM(\euX, \Lambda)$. Our main goal will be to show that the mapping spectra of $\CM(\euX, \Lambda)$ are connective but along the way we will identify the Borel-Moore homology of a quotient stack with the equivariant higher Chow groups. In this section we will take $k$ to be a field of arbitrary characteristic, also in many proofs we will suppress the notation for the coefficient ring and often write $\DM(\euX)$ with the hope of making things easier to read.

 Let $X$ be a quasi-projective scheme over $B$ of dimension $n$ equipped with an action of an affine algebraic group $G$ and integers $ s, t \in \mathbf{Z}$. We fix a Totaro gadget $(U \subset V)$ where $ V$ is a $G$-representation and $ j: U \subset V$ an open subscheme on which $G$ acts freely and such that the reduced complement $ \iota : Z \hookrightarrow V$ satisfies $ c:= \text{codim}_V Z > n - s $ and such that the quotient $ (U \times X) / G$ exists as a scheme. Let $ l : = \dim V$ and $ g : = \dim G$. Then one can define the equivariant higher Chow groups as 
\begin{center}
$\chow^G_s(X, t) : = \chow_{s + l - g} ((U \times X) / G, t).$
\end{center}
For the associated stack $\euX := [X/G]$ we define the (higher) Chow groups of $\euX$ as
\begin{center}
$\chow_s (\euX, t) : = \chow^G_{s+g}(X, t) = \chow_{s + l} ((U \times X) / G, t).$
\end{center}
Note that from this definition of we automatically have
$\chow_s (\euX, t) = 0 \text{ for } s > \dim (\euX) = n - g$. One checks this definition is well defined in the same way checks the definition of equivariant Chow groups of Edidin-Graham is well defined \cite{EG}. 

The next proposition is probably well known and its proof follows a standard way of arguing \cite{KaRa}[12.4] and \cite{RicharzScholbach}[Thm. 2.2.10]. We include it here because it will serve as a warm up for \cref{Theorem:Connectivity_of_mapping_spaces_in_DM}.

\begin{Proposition}
\label{Proposition:Borel_Moore_homology_comparison_arbitrary_coefficients}
Suppose $\euX = [X/G]$ and the integers $s, t \in \mathbf{Z}$ as in the discussion above. Let $ f: \euX \rightarrow B$ be the structure map. We have the following equivalence
    \begin{center}
        $\pi_0 \map_{\DM(\euX, \Lambda)}  (1_\euX (s)[2s+t], f^!1_B) \simeq \chow_s(\euX, t)_\Lambda.$
    \end{center}
\end{Proposition}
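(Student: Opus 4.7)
The plan is to use the Totaro gadget $(U \subset V)$ to reduce the computation from the stack $\euX = [X/G]$ to the smooth quasi-projective scheme $\euU := (U \times X)/G$, where the answer is given by Bloch's higher Chow groups via Spitzweck's identification with motivic cohomology. Write $\euV := [(V \times X)/G]$ and $\euZ := [(Z \times X)/G]$, and denote by $\pi \colon \euV \to \euX$ the rank-$l$ vector bundle, by $j \colon \euU \hookrightarrow \euV$ the open immersion (note $\euU$ is a scheme since $G$ acts freely on $U$), and by $i \colon \euZ \hookrightarrow \euV$ its closed complement.

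First, homotopy invariance (\cref{Proposition:homotopy_invariance}) makes $\pi^*$ fully faithful, and purity (\cref{Proposition:purity}) gives $\pi^! \simeq \pi^*(l)[2l]$. Combining these and using the $(\pi^*,\pi_*)$ adjunction yields the equivalence
\[
\map_{\DM(\euX, \Lambda)}\bigl(1_\euX(s)[2s+t],\, f^! 1_B\bigr) \simeq \map_{\DM(\euV, \Lambda)}\bigl(1_\euV(s+l)[2(s+l)+t],\, (f\pi)^! 1_B\bigr).
\]
Applying the localization triangle of \cref{Proposition:Localization} to $(f\pi)^!1_B$ and then $\map_\euV(1_\euV(s+l)[2(s+l)+t], -)$, and using $j^!(f\pi)^! \simeq f_\euU^!$, $i^!(f\pi)^! \simeq f_\euZ^!$ together with the $(j^*,j_*)$ and $(i^*,i_*)$ adjunctions, gives a fiber sequence of spectra
\[
\map_\euZ\bigl(1_\euZ(s+l)[2(s+l)+t], f_\euZ^! 1_B\bigr) \to \map_\euV\bigl(1_\euV(s+l)[2(s+l)+t], (f\pi)^! 1_B\bigr) \to \map_\euU\bigl(1_\euU(s+l)[2(s+l)+t], f_\euU^! 1_B\bigr).
\]

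For the right-hand term, $\euU$ is smooth quasi-projective of dimension $d = n + l - g$, so $f_\euU^! 1_B \simeq 1_\euU(d)[2d]$ by purity, and therefore
\[
\pi_0 \map_\euU\bigl(1_\euU(s+l)[2(s+l)+t], f_\euU^! 1_B\bigr) \simeq H^{2(n-g-s)-t,\,n-g-s}_M(\euU, \Lambda) \simeq \chow^{n-g-s}(\euU, t)_\Lambda = \chow_{s+l}(\euU, t)_\Lambda,
\]
which by the definition recalled in the excerpt equals $\chow_s(\euX, t)_\Lambda$.

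The main obstacle is to show that the fiber term contributes trivially, i.e.\ that $\pi_0$ and $\pi_{-1}$ of $\map_\euZ(1_\euZ(s+l)[2(s+l)+t], f_\euZ^! 1_B)$ both vanish. My plan is to bootstrap: applying the same Totaro reduction to the quotient stack $\euZ = [(Z \times X)/G]$ (with the same gadget $(U \subset V)$) identifies these homotopy groups with higher Chow groups of the scheme $(U \times Z \times X)/G$, which has dimension $2l - c + n - g$. The codimension hypothesis $c > n - s$ combined with $g \geq 0$ forces $s + 2l$ to strictly exceed this dimension, so the relevant Chow groups vanish for dimensional reasons. Care is required to avoid circularity, but since each iteration of the reduction lands on an honest scheme where higher Chow groups and motivic cohomology are classically related, the argument is self-contained.
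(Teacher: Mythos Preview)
Your overall architecture---homotopy invariance along the vector bundle $\pi:\euV\to\euX$, then the localization triangle, then the scheme-level identification on the open piece $\euU$---matches the paper's. The problem is your handling of the closed-complement term.

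You propose to kill $\pi_0$ and $\pi_{-1}$ of $\map_{\DM(\euZ)}(1_\euZ(s+l)[2(s+l)+t], f_\euZ^!1_B)$ by running the \emph{same} Totaro reduction on $\euZ=[(Z\times X)/G]$. But this produces a new fiber sequence whose open piece $(U\times Z\times X)/G$ is indeed a scheme with vanishing Chow groups in the relevant degree, while its closed piece is $[(Z\times Z\times X)/G]$---still a quotient stack. Since the open contribution is zero, the fiber sequence tells you that the term you want to kill is \emph{isomorphic} to the analogous term over $[(Z^2\times X)/G]$, not that it vanishes. Iterating only replaces $Z$ by $Z^k$; no induction parameter decreases and the argument never closes. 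The sentence ``each iteration of the reduction lands on an honest scheme'' is false for the closed-complement piece, and that is exactly the piece you need.

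The paper breaks this circularity differently: it takes a Nis-loc atlas $W\to\euZ$ (available by \cite{chowdhury2021motivic}[Rem.~2.3.7] after embedding $G\hookrightarrow GL_r$) and uses that $\DM(\euZ,\Lambda)$ is the limit of $\DM(W^a,\Lambda)$ over the \v{C}ech nerve. It then checks, level by level, that the restriction of the relevant class to each $W^a$ lands in a Chow group $\chow_{s+l+a\gamma}(W^a,\ast)$ with $s+l+a\gamma$ strictly above $\dim W^a$, hence zero. This requires the slightly stronger codimension bound $c>n-s+r^2-g$ (to absorb the $GL_r$-torsor in the atlas), which is why the paper fixes $G\hookrightarrow GL_r$ at the outset. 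You should replace your bootstrap paragraph with this \v{C}ech-nerve argument.
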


\begin{proof}
First we choose an embedding $G \hookrightarrow GL_r$. Fix a Totaro gadget $ (U,V)$ with
\begin{center}
    $\text{codim}_V(Z) > n - s + r^2 -g .$
\end{center}
Let $p: \euV : = [V/G] \rightarrow BG$ the induced vector bundle over $BG$ and $ p_\euX$ is base change to $\euX$. Then by homotopy invariance we have that $ p^* $ is fully-faithful. Thus we have an equivalence 
\begin{equation}
\map_{\DM(\euX)} (1 (s)[2s+t], f^!1_B) \simeq \map_{\DM(\euV \times_{BG} \euX)} (1 (s)[2s+t], p_\euX^*f^!1_B).
\end{equation}
Let $ \bar{j} : \euU := [U/G] \rightarrow \euV$ be the map induced by the open immersion $ j: U \rightarrow V$ and $\bar{j}_\euX$ its base change to $\euX$.  We claim that induced morphism 
\begin{equation}
\map_{\DM(\euV \times_{BG} \euX)} (1 (s)[2s+t], p_\euX^*f^!1_B) \overset{\bar{j}^*_\euX}{\rightarrow} \map_{\DM(\euU \times_{BG} \euX)} (1 (s)[2s+t], \bar{j}_\euX^* p_\euX^*f^!1_B)
\end{equation}
is an equivalence. 

Since $ p_\euX$ is smooth, separated and representable, we map apply the purity isomorphism $ p_\euX ^! \simeq p^* (l) [2l]$ which gives
\begin{center}
$\map_{\DM(\euV \times_{BG} \euX)} (1 (s+l)[2s+2l+t], p_\euX ^!f^!1_B) \overset{\bar{j}_\euX}{\rightarrow} \map_{\DM(\euU \times_{BG} \euX)} (1 (s+l)[2s+2l+t], \bar{j}_\euX^! p_\euX^!f^!1_B).$
\end{center}
Writing $ \pi:\euV \times_{BG} \euX \rightarrow B$ and $\sigma : \euU \times_{BG} \euX \rightarrow B$ for the structure maps we can rewrite this is as
\begin{equation}
\label{Equation:Borel_Moore_homology_comparison_1}
\map_{\DM(\euV \times_{BG} \euX)} (1 (s+l)[2s+2l+t], \pi^! 1_B) \overset{\bar{j}_\euX}{\rightarrow} \map_{\DM(\euU \times_{BG} \euX)} (1 (s+l)[2s+2l+t], \sigma^! 1_B).
\end{equation}
To see that \ref{Equation:Borel_Moore_homology_comparison_1} is an equivalence via the localization triangle
\begin{center}
 $ i_* i^! \rightarrow \id \rightarrow j_* j^!, $
\end{center}
we are reduced to showing that
\begin{center}
$\pi_0 \map_{\DM(\euZ \times_{BG} \euX)} (1 (s+l)[r], \bar{\iota}^!\pi^! 1_B) = 0 $
\end{center}
for all $r \in \mathbf{Z}$.

As in \cite{chowdhury2021motivic}[Rem. 2.3.7] we may find a Nis-loc atlas:
\begin{center}
    $ W \rightarrow (X \times Z) \times^G GL_r \rightarrow \euZ \times_{BG} \euX$ 
\end{center}
where $W$ is a scheme and the first arrow is an \'etale surjection. 

Since we can compute $\DM(\euZ \times_{BG} \euX)$ along \v{C}ech nerves of Nis-loc atlases it will be enough to show that $\pi_0 \map_{\DM(\euZ \times_{BG} \euX)} (1 (s+l)[r], \bar{\iota}^!\pi^! 1_B)$ vanishes on the $!$-restriction to each term 
\begin{center}
    $W^a : = \underbrace{ W \times_{\euZ \times_{BG} \euX} \cdots \times_{\euZ \times_{BG} \euX} W}_{a}$ 
\end{center}
for $ a \geq 0$ in the \v{C}ech nerve of $ W \rightarrow \euZ \times_{BG} \euX$. 

Writing $ \eta^!_a : \DM (\euZ \times_{BG} \euX) \rightarrow \DM(W^a)$ for the $!$-restriction in the \v{C}ech nerve, the purity isomorphism gives
\begin{center}
    $\eta^!_a \simeq \eta^*_a (a\gamma)[2a\gamma]$
\end{center}
where $\gamma$ is the relative dimension of the Nis-loc atlas $ W \rightarrow \euZ \times_{BG} \euX$. We also have $\eta_a^! \bar{\iota}^! \pi^! \simeq h_a^!$ where $h_a :W^a \rightarrow B$ the structure map. We are reduced to showing
\begin{center}
    $\pi_0 \map_{\DM(W^a )} (1 (s+l+a\gamma)[r+2a\gamma], h_a^! 1_B) = 0 $
\end{center}
for all $a \geq 0$.

But now we are in the realm of finite type schemes over a field and we know that
\begin{center}
$\pi_0 \map_{\DM(W^a )} (1 (s+l+a\gamma)[r+2a\gamma], h_a^! 1_B) = \chow_{s+l+a\gamma}(W^a, r-s-l)$
\end{center}
and by our choice of Totaro gadget we have that
\begin{center}
    $l + s > n + \dim(Z) +r^2 - g $
\end{center} 
and thus $   l + s + a\gamma > n+ \dim(Z) + a\gamma$. In particular because the Chow groups vanish, we conclude that 
\begin{center}
$\pi_0 \map_{\DM(W^a )} (1 (s+l+a\gamma)[r+2a\gamma], h_a^! 1_B) = 0$
\end{center}
which is what we wanted to show.
\end{proof}

The next theorem will be important in establishing the weight structure on $\DM(\euX, \Lambda)$, we will use the symbol $\Map$ to refer to the \emph{mapping spectra} as opposed to the symbol $\map$ which denotes the \emph{mapping space}. 

\begin{Theorem}
\label{Theorem:Connectivity_of_mapping_spaces_in_DM}
Suppose that $\euS = [S/G]$ where $S$ is a finite type scheme over $B$ and $G$ is an affine algebraic group. Let $\euX$ and $\euY$ be smooth stacks over $B$ and projective over $\euS$ and $ j, m , n \in \mathbf{Z}$ and let $d_{\euY}$ be the dimension of $\euY$ over $B$. 
\begin{center}
    $ \pi_j \Map_{\DM(\euS, \Lambda)} (f_!1_\euX (m)[2m], g_!1_\euY (n) [2n]) \simeq \chow_{d_{\euY} -n +m} (\euX \times_\euS \euY, j),$
\end{center}
in particular the mapping spectrum
\begin{center}
$\Map_{\DM(\euS, \Lambda)} (f_!1_\euX (m)[2m], g_!1_\euY (n) [2n])$
\end{center}
is connective. 
\end{Theorem}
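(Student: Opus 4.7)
The plan is to reduce the computation to \cref{Proposition:Borel_Moore_homology_comparison_arbitrary_coefficients} by a sequence of standard adjunctions and base changes, after which connectivity will drop out from the usual vanishing of Bloch's higher Chow groups in negative simplicial degrees.

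Write $f \colon \euX \to \euS$ and $g \colon \euY \to \euS$ for the two projective (hence representable and proper) structure morphisms, and form the cartesian square
\begin{center}
	\begin{tikzpicture}
		\node (TL) at (0 , 1.2) {$\euX \times_\euS \euY$};
		\node (TR) at (2.5, 1.2) {$\euY$};
		\node (BL) at (0, 0){$\euX$};
        \node (BR) at (2.5,0){$\euS.$};
		\draw[->] (TL) -- (TR) node [midway, above] {$f'$};
		\draw[->] (TL) -- (BL) node [midway, left] {$g'$};
		\draw[->] (TR) -- (BR) node [midway, right] {$g$};
        \draw[->] (BL) -- (BR) node [midway, below] {$f$};
	\end{tikzpicture}
\end{center}
First, using $g_! \simeq g_*$ and the $(g^*,g_*)$-adjunction together with the base change equivalence $g^* f_! \simeq f'_! g'^*$ from \cref{Proposition:Existence_of_exceptional_functors}, and then the $(f'_!, f'^!)$-adjunction, I rewrite
\[
\Map_{\DM(\euS)}\bigl(f_! 1_\euX(m)[2m], g_! 1_\euY(n)[2n]\bigr) \simeq \Map_{\DM(\euX \times_\euS \euY)}\bigl(1(m)[2m], f'^! 1_\euY (n)[2n]\bigr).
\]

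Next I identify $f'^! 1_\euY$ with the dualising object of $\euX \times_\euS \euY$ up to a twist. Denote by $p_\euY \colon \euY \to B$ and $p \colon \euX \times_\euS \euY \to B$ the structure maps, so that $p = p_\euY \circ f'$. Since $\euY$ is smooth over $B$ of dimension $d_\euY$, purity (\cref{Proposition:purity}) gives $p_\euY^! 1_B \simeq 1_\euY(d_\euY)[2d_\euY]$, so $f'^! 1_\euY \simeq p^! 1_B(-d_\euY)[-2d_\euY]$. Absorbing twists and shifts, this converts the mapping spectrum into
\[
\Map_{\DM(\euX \times_\euS \euY)}\bigl(1(s)[2s], p^! 1_B\bigr), \qquad s := d_\euY - n + m.
\]

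At this point I want to apply \cref{Proposition:Borel_Moore_homology_comparison_arbitrary_coefficients}, which requires the source to be a quotient stack $[W/G]$ for some quasi-projective $W$. Since $\euS = [S/G]$ and both $f$, $g$ are representable and projective, pulling back along the atlas $S \to \euS$ yields projective $G$-schemes $X, Y \to S$ with $\euX = [X/G]$, $\euY = [Y/G]$, so $\euX \times_\euS \euY = [X \times_S Y / G]$, still a quotient of a quasi-projective $G$-scheme. This is the one non-formal point to nail down; the rest is bookkeeping. Applying \cref{Proposition:Borel_Moore_homology_comparison_arbitrary_coefficients} with $t = j$ yields
\[
\pi_j \Map_{\DM(\euS)}\bigl(f_! 1_\euX(m)[2m], g_! 1_\euY(n)[2n]\bigr) \simeq \chow_{d_\euY - n + m}(\euX \times_\euS \euY, j)_\Lambda.
\]
For the connectivity statement, Bloch's higher Chow groups $\chow_s(-, j)$ vanish for $j<0$ (they are computed by a complex concentrated in nonnegative simplicial degrees, and this vanishing is preserved by the Totaro-gadget definition used above), so the mapping spectrum has vanishing negative homotopy groups and is therefore connective.
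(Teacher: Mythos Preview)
Your argument is correct and is considerably more direct than the paper's proof. The paper does not first reduce the mapping spectrum to Borel--Moore homology of $\euX\times_\euS\euY$; instead it repeats the Totaro--gadget machinery from \cref{Proposition:Borel_Moore_homology_comparison_arbitrary_coefficients} \emph{for the mapping spectrum itself}: it passes to $\euV\times_{BG}\euS$ by homotopy invariance, proves via a lengthy localisation/atlas computation (the paper's Claim~5.3) that restricting further to $\euU\times_{BG}\euS$ is an equivalence, and only at that point---once everything is a scheme---applies the adjunction/purity manoeuvre of \cite{Jin}[Lem.~2.37] to land in Chow groups. Your route instead runs the Jin-style adjunctions directly at the stack level (which is formal, as you say) and then invokes \cref{Proposition:Borel_Moore_homology_comparison_arbitrary_coefficients} once, letting that proposition absorb the entire approximation argument. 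This is shorter and conceptually cleaner; the paper's approach has the mild advantage of being self-contained and of making explicit why the codimension bound on the Totaro gadget interacts with $m$, $n$, and $\dim X$.

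One small point to tighten: \cref{Proposition:Borel_Moore_homology_comparison_arbitrary_coefficients} is stated for $\euX=[X/G]$ with $X$ \emph{quasi-projective}, while the theorem only assumes $S$ is finite type. Your sentence ``still a quotient of a quasi-projective $G$-scheme'' asserts that $X\times_S Y$ is quasi-projective over $B$, but this is not automatic from $X,Y$ being projective over a merely finite type $S$. Either note that the proof of \cref{Proposition:Borel_Moore_homology_comparison_arbitrary_coefficients} does not actually use quasi-projectivity (it works with Nis-loc atlases), or observe that in the applications $S$ is quasi-projective so that $X\times_S Y$ is too.
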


\begin{proof}
First we fix an embedding $G \hookrightarrow GL_r$ and we fix a Totaro gadget $ (U,V)$ for $G$ so that 
\begin{center}
    $ c:= \text{codim}_V (Z) > \dim(X) - \dim(S) +n - m +r^2 - g$.
\end{center}
Consider the cartesian squares
\begin{center}
	\begin{tikzpicture}
	   \node (TL) at (0 , 1.5) {$ \euV \times_{BG} \euX$};
	   \node (TR) at (1.8, 1.5) {$\euX$};
	   \node (BL) at (0, 0){$ \euV \times_{BG} \euS $};
       \node (BR) at (1.8,0){$\euS$};
       \node (TLL) at (4 , 1.5) {$ \euV \times_{BG} \euY$};
	   \node (TRR) at (5.8, 1.5) {$\euY$};
	   \node (BLL) at (4, 0){$\euV \times_{BG} \euS$};
       \node (BRR) at (5.8,0){$\euS.$};

       \draw[->] (TLL) -- (TRR) node [midway, above] {$p_\euY$};
	   \draw[->] (TLL) -- (BLL)node [midway, left] {$g_\euV$};
	   \draw[->] (TRR) -- (BRR)node [midway, right] {$g$};
        \draw[->] (BLL) -- (BRR) node [midway, below] {$p$};
	   \draw[->] (TL) -- (TR) node [midway, above] {$p_\euX$};
	   \draw[->] (TL) -- (BL)node [midway, left] {$f_\euV$};
	   \draw[->] (TR) -- (BR)node [midway, right] {$f$};
      \draw[->] (BL) -- (BR) node [midway, below] {$p$};
\end{tikzpicture}
\end{center}
Combined with base change these give the following equivalences
\begin{eqnarray}
\label{Equation:connectivity_of_mapping_spectra_1}
p^* f_!1_\euX \simeq {f_\euV}_! p_\euX^*1_\euX \simeq {f_\euV}_! 1_{\euV\times_{BG} \euX}& & p^* g_!1_\euY \simeq {g_\euV}_! p_\euY^*1_\euY \simeq {g_\euV}_! 1_{\euV \times_{BG} \euY}.
\end{eqnarray}
Since $p$ is a vector bundle over $\euS$ it follows by homotopy invariance that $ p^*$ is fully-faithful which when combined with
\ref{Equation:connectivity_of_mapping_spectra_1} gives an equivalence
\begin{equation}
\map_{\DM(\euS)} (f_!1_\euX (m)[2m+j], g_!1_\euY (n) [2n]) \overset{p^*}{\simeq} \map_{\DM(\euV \times_{BG} \euS)} ({f_\euV}_!1 (m)[2m+j], {g_\euV}_!1(n) [2n]).
\end{equation}
Let $ \bar{j}: \euU \times_{BG} \euS \rightarrow \euV \times_{BG} \euS$ be the open immersion induced by $U \subset V$. Via the cartesian diagrams 
\begin{center}
	\begin{tikzpicture}
	   \node (TL) at (0 , 1.5) {$ \euU \times_{BG} \euX$};
	   \node (TR) at (1.8, 1.5) {$\euX$};
	   \node (BL) at (0, 0){$ \euU \times_{BG} \euS$};
       \node (BR) at (1.8,0){$\euS$};
       \node (TLL) at (4 , 1.5) {$ \euU \times_{BG} \euY$};
	   \node (TRR) at (5.8, 1.5) {$\euY$};
	   \node (BLL) at (4, 0){$ \euU \times_{BG} \euS$};
       \node (BRR) at (5.8,0){$\euS$};

       \draw[->] (TLL) -- (TRR) node [midway, above] {$\bar{j}_\euY$};
	   \draw[->] (TLL) -- (BLL)node [midway, left] {$g_\euU$};
	   \draw[->] (TRR) -- (BRR)node [midway, right] {$g$};
        \draw[->] (BLL) -- (BRR) node [midway, below] {$\bar{j}$};
	   \draw[->] (TL) -- (TR) node [midway, above] {$\bar{j}_\euX$};
	   \draw[->] (TL) -- (BL)node [midway, left] {$f_\euU$};
	   \draw[->] (TR) -- (BR)node [midway, right] {$f$};
      \draw[->] (BL) -- (BR) node [midway, below] {$\bar{j}$};
\end{tikzpicture}
\end{center}
we get the equivalences
\begin{eqnarray}
    \bar{j}^* f_!1_\euX \simeq {f_\euU}_! \bar{j}_\euX ^*1_\euX \simeq {f_\euU}_! 1_{\euU\times_{BG} \euX}& & \bar{j}^* g_!1_\euY \simeq {g_\euU}_! \bar{j}_\euY ^*1_\euY \simeq {g_\euU}_! 1_{\euU\times_{BG} \euY}.
\end{eqnarray}
Composing $\bar{j}^*$ with $p^*$ gives a map 
\begin{equation}
\label{Equation:connectivity_of_mapping_spectra_2}
\map_{\DM(\euS)} (f_!1_\euX (m)[2m +j], g_!1_\euY (n) [2n]) \overset{\bar{j}^*p^*}{\rightarrow} \map_{\DM(\euU \times_{BG} \euS)} ({f_\euU}_!1 (m)[2m+j], {g_\euU}_!1(n) [2n]).
\end{equation}
we claim that \cref{Equation:connectivity_of_mapping_spectra_2} is an equivalence. 

From considering the localization triangle 
\begin{center}
$ \bar{\iota}_* \bar{\iota}^! \rightarrow \id \rightarrow \bar{j}_* \bar{j}^!, $
\end{center}
we simply need to show that
\begin{center}
$ \map_{\DM(\euV \times_{BG} \euS)} ({f_\euV}_!1 (m)[2m+j], \bar{\iota}_*\bar{\iota}^!{g_\euV}_!1(n) [2n]) \simeq 0.$
\end{center}
Thus it will be enough to prove the following:
\begin{Claim}
\label{Claim:Connectivity of mapping spectra techincal claim}
\begin{equation}
\label{Equation:connectivity_of_mapping_spectra_3}
\pi_0 \map_{\DM(\euZ \times_{BG} \euS)} ({f_\euZ}_!1, \bar{\iota}^!{g_V}_!1 (n-m) [r]) \simeq 0
\end{equation}
for all $ r \in \mathbf{Z}$.  
\end{Claim}
Via standard arguments we can reduce to the situation where $Z$ is regular, in which case we have by absolute purity $ \iota^* \simeq \iota^!(c)[2c]$. 

Via the diagram of cartesian square

\begin{center}
	\begin{tikzpicture}
	   \node (TL) at (0 , 1.5) {$ \euZ \times_{BG} \euY$};
	   \node (TM) at (2, 1.5) {$\euV \times_{BG} \euY$};
        \node (TR) at (4, 1.5) {$\euY$};
	   \node (BL) at (0, 0) {$ \euZ \times_{BG} \euS$};
       \node (BM) at (2,0) {$\euV \times_{BG} \euS$};
       \node (BR) at (4, 0) {$\euS$};

       \draw[->] (TL) -- (TM) node [midway, above] {$\bar{\iota}_\euY$};
	   \draw[->] (TM) -- (TR) node [midway, above] {$p_\euY$};
	   \draw[->] (BL) -- (BM) node [midway, below] {$\bar{\iota}$};
        \draw[->] (BM) -- (BR) node [midway, below] {$p$};
	   \draw[->] (TM) -- (BM) node [midway, right] {$g_\euV$};
	   \draw[->] (TL) -- (BL)node [midway, left] {$g_\euZ$};
	   \draw[->] (TR) -- (BR)node [midway, right] {$g$};
\end{tikzpicture}
\end{center}
we have the base change equivalence
\begin{center}
$ \bar{\iota}^! {g_\euV}_! \simeq {g_\euZ}_!\bar{\iota}^!_\euY.$
\end{center}
which when combined with absolute purity for $\iota$ allows us to rewrite the \ref{Equation:connectivity_of_mapping_spectra_3} as
\begin{equation}
\label{Equation:connectivity_of_mapping_spectra_4}
     \pi_0 \map_{\DM(\euZ \times_{BG} \euS)} ({f_\euZ}_!1, {g_\euZ}_!1 (n-m-c) [r -2c]) \simeq 0.
\end{equation}

Since $\euZ \times_{BG} \euS \simeq [ (Z \times S)/G]$, by \cite{chowdhury2021motivic}[Rem. 2.3.7] we have a Nis-loc atlas $ W \rightarrow \euZ \times_{BG} \euS$. Hence we can compute $DM(\euZ \times_{BG} \euS)$ via the \v{C}ech nerve
\begin{center}
    $\cdots \triplerightarrow W^2: = W \times_{\euZ \times_{BG} \euS} W \rightrightarrows W \rightarrow \euZ \times_{BG} \euS.$
\end{center}
Thus it is enough to show \ref{Equation:connectivity_of_mapping_spectra_4} on the restriction to each $\DM(W^q)$. We write $ \pi^*_q$ for the restiction $\DM(\euZ \times_{BG} \euS)\rightarrow \DM(W^q)$. Then we can write the mapping space
\begin{equation*}
    \map_{\DM(W^q)} (\pi_q^* {f_\euZ}_!1, \pi_q^*  {g_\euZ}_!1 (n-m-c) [r -2c]),
\end{equation*}
as
\begin{equation*}
    \map_{\DM(W^q)} ({{f_\euZ}_q }_!1, {{g_\euZ}_q}_!1 (n-m-c) [r -2c]),
\end{equation*}
where ${f_{\euZ}}_q : W_\euX ^q \rightarrow  W^q$ (resp. ${g_{\euZ}}_q$) is the base change of $ f_\euZ$ along the map $ \pi_q: W^q \rightarrow \euZ \times_{BG} \euS$. 

We must show 
\begin{center}
 $\pi_0 \map_{\DM(W^q)} ({{f_\euZ}_q }_!1, {{g_\euZ}_q}_!1 (n-m-c) [r -2c]) = 0$.
\end{center}
Via \cite{Jin}[Lem. 2.37] we have the equivalence
\begin{center}
 $\pi_0 \map_{\DM(W^q)} ({{f_\euZ}_q }_!1, {{g_\euZ}_q}_!1 (n-m-c) [r -2c]) \simeq H^{BM}_{2d_q -r +2c,d_q -n + m +c }  (W^q_\euX \times_{W^q} W^q_\euY)$
\end{center}
where $d_q := \dim(Y) + \dim(Z) + r^2 - g + q\gamma$. Now comparing with the Chow groups
\begin{center}
    $H^{BM}_{2d_q -r +2c,d -n + m +c }  (W^q_\euX \times_{W^q} W^q_\euY) \simeq \chow_{d_q - n + m+ c} (W^q_\euX \times_{W^q} W^q_\euY, -r+2n-2m-c)$
\end{center}
we see that
\begin{equation*}
    d_q - n +m +c > \dim (W^q_\euX \times_{W^q} W^q_\euY).
\end{equation*}
thus these groups vanish proving \cref{Claim:Connectivity of mapping spectra techincal claim}. 

To see how the main result follows from \cref{Claim:Connectivity of mapping spectra techincal claim}, note that it's consequence is the equivalence
\begin{equation}
\label{Equation:Connectivity jin}
    \map_{\DM(\euS)} (f_!1_\euX (m)[2m +j], g_!1_\euY (n) [2n]) \overset{\bar{j}^*p^*}{\rightarrow} \map_{\DM(\euU \times_{BG} \euS)} ({f_\euU}_!1 (m)[2m+j], {g_\euU}_!1(n) [2n]).
\end{equation}
Now we note that the stack $\euU \times_{BG} \euX \times_\euS \euY$ is equivalent to $ \euU \times_{BG} \euX \times_{\euU \times_{BG} \euS} \euU \times_{BG} \euY$, which means that it is a scheme. Following the arguments of \cite{Jin}[Lem. 2.37] we can identify the right hand side of \cref{Equation:Connectivity jin} with 
\begin{center}
  $ \pi_0  \map_{\DM(\euU \times_{BG} \euX \times_\euS \euY)} (1(l + \dim(\euY)  + m -n)[2m-2n+2l +2\dim(\euY) +j], a^! 1_B)  $
\end{center}
via base change and purity, where $a:\euU \times_{BG} \euX \times_\euS \euY) \rightarrow B$ is the structure morphism.  We now see by \cref{Proposition:Borel_Moore_homology_comparison_arbitrary_coefficients} that this is just 
\begin{center}
   $ \chow_{l +d_\euY - n + m} (\euU \times_{BG} \euX \times_\euS \euY, j) \simeq \chow _{d_\euY - n + m}(\euX \times_\euS \euY , j)$
\end{center}
which is what we wanted to show.
\end{proof}

\section{Weight Structures}
\label{Section:Weight Structure}

We first remind the reader of the definition of a weight structure on the stable $\infty$-category. 

\begin{Definition}
    A weight structure on a stable $\infty$-category $\mathscr{C}$, is the datat of two retract closed subcategories $(\mathscr{C}_{w \geq 0} , \mathscr{C}_{w \leq 0})$ such that:
    \begin{enumerate}
    \item $\Sigma \mathscr{C}_{w \geq 0}  \subset \mathscr{C}_{w \geq 0} ,\  \Omega \mathscr{C}_{w \leq 0}  \subset \mathscr{C}_{w \leq 0 }$. We write
    \begin{center}
    $\mathscr{C}_{w \geq n} : = \Sigma^n \mathscr{C}_{w \geq 0}, \mathscr{C}_{w \leq n} : = \Omega^n \mathscr{C}_{w \leq 0}$
    \end{center}
\item If $ x \in \mathscr{C}_{w \leq 0}, y \in \mathscr{C}_{w \geq 1}$ then
\begin{center}
$ \pi_0 \map (x, y ) \simeq 0$.
\end{center}
\item For any $ x \in \mathscr{C}$ we have a cofiber sequence 
\begin{center}
$ x_{\leq 0} \rightarrow x \rightarrow x_{\geq 1}$.
\end{center}
with $x_{\leq 0}\in \mathscr{C}_{w \leq 0}$ and $ x_{\geq 1}  \in \mathscr{C}_{w \geq 1}$,  called the weight truncations of $x$. 
\end{enumerate}
We say that a weight structure is bounded if 
\begin{center}
    $\mathscr{C} = \bigcup_{n \in \mathbf{Z}} (\mathscr{C}_{w \geq -n} \cap \mathscr{C}_{w \leq n})$.
\end{center}
We also define the weight heart of weight structure to be 
\begin{center}
    $\mathscr{C}^{\heartsuit_w} : = \mathscr{C}_{w \geq 0} \cap \mathscr{C}_{w \leq 0}$.
\end{center}
\end{Definition}

Next we state a theorem due to Bondarko \cite{bondarko_2010}[4.3.2.II], but see also H\'ebert \cite{hébert_2011}[Thm 1.9]. (see also \cite{ElmantoSosnilo}[Rem. 2.2.6] for the $\infty$-categorical version, which we state here)
\begin{Theorem}
\label{Theorem:Existence_of_abstract_weight_structure}
    (Bondarko) Let $\mathscr{C}$ be a stable $\infty$-category. Assume we are given a full subcategory $ \mathscr{B} \subset \mathscr{C}$ such that
    \begin{enumerate}
\item $\mathscr{B}$ generates $\mathscr{C}$ under finite limits, finite colimits and retracts.
\item $\mathscr{B}$ has connective mapping spectra. 
\end{enumerate}
Then we may define the following subcategories
\begin{center}
$\mathscr{C}_{w \geq 0} = \{ \text{retracts of finite colimits of objects of } \mathscr{B} \}$
\end{center}
and 
\begin{center}
$\mathscr{C}_{w \leq 0} = \{ \text{retracts of finite limits of objects of } \mathscr{B} \}.$
\end{center}
These subcategories give a bounded weight structure on $\mathscr{C}$ whose heart is the minimal retract-closed additive subcategory containing  $\mathscr{B}$. 
\end{Theorem}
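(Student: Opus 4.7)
The plan is to verify the three defining axioms of a weight structure for the two subcategories $\mathscr{C}_{w \geq 0}$ and $\mathscr{C}_{w \leq 0}$ as defined, with the bulk of the work going into the existence of weight truncations.

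First I would handle the shift-closure axiom. In a stable $\infty$-category, $\Sigma x$ is the pushout $0 \sqcup_x 0$, which is a finite colimit; similarly $\Omega x$ is a finite limit. Since the defining classes are already closed under retracts and finite colimits (respectively finite limits) of objects of $\mathscr{B}$, the inclusions $\Sigma \mathscr{C}_{w \geq 0} \subset \mathscr{C}_{w \geq 0}$ and $\Omega \mathscr{C}_{w \leq 0} \subset \mathscr{C}_{w \leq 0}$ follow formally.

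Next I would check orthogonality. For $b, b' \in \mathscr{B}$, connectivity of $\map(b,b')$ gives $\pi_0 \map(b, \Sigma b') = \pi_{-1} \map(b,b') = 0$. This vanishing propagates: $\map(-, -)$ sends finite colimits in the first variable to finite limits of spectra and preserves finite limits in the second variable, and a finite limit of connective spectra is connective. Taking retracts preserves connectivity as well. Hence $\map(x,y)$ is connective whenever $x \in \mathscr{C}_{w \leq 0}$ and $y \in \mathscr{C}_{w \geq 0}$; shifting once more shows $\pi_0 \map(x, \Sigma y) = 0$, giving axiom (2).

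The main obstacle is the construction of weight truncations. The strategy is to show by induction on the build-up of an object from $\mathscr{B}$ that it admits a weight decomposition. The base case, for $b \in \mathscr{B}$, is the trivial cofiber sequence $b \to b \to 0$. The inductive step rests on the following key gluing lemma: if $x \to y \to z$ is a cofiber sequence and both $x$ and $z$ admit weight decompositions $x_{\leq 0} \to x \to x_{\geq 1}$ and $z_{\leq 0} \to z \to z_{\geq 1}$, then so does $y$. One produces $y_{\leq 0}$ by solving a lifting problem: the composite $x_{\leq 0} \to x \to y \to z \to z_{\geq 1}$ vanishes because $\pi_0 \map(x_{\leq 0}, z_{\geq 1}) = 0$ by the orthogonality just verified, which yields a factorization through $z_{\leq 0}$, and then a $3 \times 3$ pushout/pullback diagram in the stable $\infty$-category produces $y_{\leq 0}$ as an extension of $z_{\leq 0}$ by $x_{\leq 0}$, together with the complementary $y_{\geq 1}$. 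Retract-closure of the decomposition is formal: a retract diagram of weight decompositions yields a weight decomposition of the retract, since $\mathscr{C}_{w \geq 0}$ and $\mathscr{C}_{w \leq 0}$ are retract-closed by definition. Since $\mathscr{B}$ generates $\mathscr{C}$ under finite limits, colimits and retracts by hypothesis (1), every object is reached in finitely many such steps and therefore admits a weight decomposition.

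Finally, boundedness follows by tracking the inductive construction: an object built in $n$ steps lies in $\mathscr{C}_{w \geq -n} \cap \mathscr{C}_{w \leq n}$. The identification of the heart as the smallest retract-closed additive subcategory containing $\mathscr{B}$ comes from noting that $\mathscr{B} \subset \mathscr{C}^{\heartsuit_w}$, and conversely any object in $\mathscr{C}_{w \geq 0} \cap \mathscr{C}_{w \leq 0}$, being simultaneously a retract of a finite colimit and a finite limit of elements of $\mathscr{B}$, can, using the orthogonality axiom to kill the connecting maps in any filtration, be exhibited as a retract of a finite direct sum of objects of $\mathscr{B}$.
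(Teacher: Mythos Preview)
The paper does not actually prove this theorem: it is stated with attribution to Bondarko and accompanied only by citations to \cite{bondarko_2010}[4.3.2.II], \cite{hébert_2011}[Thm.~1.9], and \cite{ElmantoSosnilo}[Rem.~2.2.6]. There is no argument in the paper to compare against. Your sketch is essentially the standard proof found in those references---the $3\times 3$ gluing lemma for weight decompositions and the inductive propagation along the thick closure of $\mathscr{B}$ are exactly the mechanism Bondarko uses.

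That said, your orthogonality paragraph contains a slip. You write that $\map(-,-)$ ``sends finite colimits in the first variable to finite limits of spectra and preserves finite limits in the second variable, and a finite limit of connective spectra is connective.'' But the relevant objects have the opposite variance: $x \in \mathscr{C}_{w\leq 0}$ is a (retract of a) finite \emph{limit} of objects of $\mathscr{B}$, and $y \in \mathscr{C}_{w\geq 0}$ is a (retract of a) finite \emph{colimit}. Moreover, the assertion that finite limits of connective spectra are connective is false (the fiber of a map of connective spectra need not be connective). The correct statement is that $\Map(-,y)$ sends finite limits in $\mathscr{C}$ to finite \emph{colimits} in spectra (by contravariance and stability), and $\Map(x,-)$ preserves finite colimits (by exactness); since finite colimits of connective spectra are connective, the conclusion follows. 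This is a phrasing error rather than a structural gap, and once corrected your argument goes through.
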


\begin{Theorem} 
\label{Theorem:Weight_structure_for_global_quotients}
Let $\euX = [X/G]$ where $X$ is a quasi-projective scheme over a field $k$ of characteristic $0$ and $G$ is an affine algebraic group.

\begin{enumerate}
    \item The $\infty$-cateegory $\DM_{\gm}(\euX, \Lambda)$ admits a bounded weight structure, with 
    \begin{center}
        $\DM_{\gm}(\euX, \Lambda)^{\heartsuit_w} \simeq \CM(\euX, \Lambda)$.
    \end{center}

    \item The $\infty$-category $\Ind \DM_{\gm} (\euX, \Lambda)$ admits a weight structure which restricts to the weight structure on $\DM_{\gm} (\euX, \Lambda)$ constructed in $(1)$. 
\end{enumerate}
\end{Theorem}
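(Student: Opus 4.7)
The strategy for part (1) is to apply the abstract existence theorem of Bondarko recorded as \cref{Theorem:Existence_of_abstract_weight_structure}, taking $\mathscr{C} = \DM_{\gm}(\euX, \Lambda)$ and $\mathscr{B} = \CM(\euX, \Lambda)$. Its two hypotheses are exactly the two main results of the preceding sections. The generation hypothesis (i) is precisely \cref{Theorem:Generation_of_DM_gm_by_chow}. For the connectivity hypothesis (ii), the category $\CM(\euX, \Lambda)$ is by \cref{Definition:Infty category of chow motives} the additive, retract-closed hull of objects of the form $f_! 1_{\euZ}(q)[2q]$ with $\euZ$ smooth over $B$ and $f\colon \euZ \to \euX$ projective; since mapping spectra preserve finite direct sums and retracts, it suffices to check connectivity on such generators. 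Applying \cref{Theorem:Connectivity_of_mapping_spaces_in_DM} with $\euS := \euX$ (which is permissible because $\euX = [X/G]$ is a quotient of a finite type scheme) identifies $\pi_j$ of the mapping spectrum with the equivariant higher Chow group $\chow_{d_{\euZ'} + q - q'}(\euZ \times_{\euX} \euZ', j)$, and higher Chow groups vanish in negative simplicial degree.

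Granted these two hypotheses, \cref{Theorem:Existence_of_abstract_weight_structure} produces a bounded weight structure on $\DM_{\gm}(\euX, \Lambda)$ whose heart is the smallest retract-closed additive subcategory of $\DM(\euX, \Lambda)$ containing $\CM(\euX, \Lambda)$. But by its very definition $\CM(\euX, \Lambda)$ is already an additive subcategory closed under retracts, so the heart is $\CM(\euX, \Lambda)$ itself. This completes part (1).

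For part (2), the plan is to extend the bounded weight structure of (1) to the Ind-completion by the standard procedure for Ind-extending bounded weight structures on small stable $\infty$-categories (compare \cite{bondarko_2010} and \cite{ElmantoSosnilo}). Concretely, one sets $\Ind \DM_{\gm}(\euX, \Lambda)_{w \geq 0}$ (resp.\ $\Ind \DM_{\gm}(\euX, \Lambda)_{w \leq 0}$) to be the closure of $\DM_{\gm}(\euX, \Lambda)_{w \geq 0}$ (resp.\ $\DM_{\gm}(\euX, \Lambda)_{w \leq 0}$) under filtered colimits in $\Ind \DM_{\gm}(\euX, \Lambda)$. Since every object of $\DM_{\gm}(\euX, \Lambda)$ is compact in $\Ind \DM_{\gm}(\euX, \Lambda)$, the orthogonality axiom reduces to orthogonality in $\DM_{\gm}(\euX, \Lambda)$ by commuting mapping spectra past filtered colimits. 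The expected main obstacle is the existence of weight truncations for a general Ind-object; the standard argument writes such an object as a filtered colimit $\colim_i m_i$ of objects of $\DM_{\gm}(\euX, \Lambda)$, chooses weight truncation cofiber sequences for each $m_i$ using boundedness of the weight structure of (1), and then takes the filtered colimit of the resulting truncation triangles. Finally, restriction to compact objects recovers the weight structure of (1) because the compact objects of $\Ind \DM_{\gm}(\euX, \Lambda)$ are precisely the retracts of objects of $\DM_{\gm}(\euX, \Lambda)$, and $\DM_{\gm}(\euX, \Lambda)$ is itself already retract-closed.
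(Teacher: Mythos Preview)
Your proposal is correct and follows essentially the same approach as the paper. For part (1) you apply \cref{Theorem:Existence_of_abstract_weight_structure} with $\mathscr{C} = \DM_{\gm}(\euX,\Lambda)$ and $\mathscr{B} = \CM(\euX,\Lambda)$, invoking \cref{Theorem:Generation_of_DM_gm_by_chow} and \cref{Theorem:Connectivity_of_mapping_spaces_in_DM} exactly as the paper does; for part (2) the paper simply cites \cite{BondarkoIvanov}[Prop.~1.4.2~(9)], which packages the Ind-extension argument you sketch.
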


\begin{proof}
For the first claim simply have to verify the conditions \cref{Theorem:Existence_of_abstract_weight_structure}. In the notation of that theorem we take
\begin{eqnarray*}
    \mathscr{C} := \DM_{\gm} (\euX, \Lambda) & & \mathscr{B} : = \CM(\euX, \Lambda).
\end{eqnarray*}
Condition $(1)$ follows from \cref{Theorem:Generation_of_DM_gm_by_chow}
and condition  $(2)$ follows from \cref{Theorem:Connectivity_of_mapping_spaces_in_DM}. For the second claim we can use \cite{BondarkoIvanov} [Prop. 1.4.2 (9)] to finish the argument.
\end{proof}

\section{Equivariant Motives}
\label{Section:Equivariant Motives}

 In this final section we identify the homotopy category of Chow motives $\h \CM(\euS, \Lambda)$ with the natural generalization of both Laterveer's category of $G$-equivariant Chow motives  \cite{Laterveer} as well as Corti and Hanamura's category of Chow motives over a general base \cite{CortiHanamura} when $\euS$ is a global quotient stack. That is to say when $\euS$ is $BG$ our identification will show that $\h \CM(\euS, \mathbf{Q})$ is equivalent to Laterveer's original category.

 Let $\euS = [S/G]$ where $S$ is quasi-projective over $B : = \spec(k)$ and $G$ is an affine algebraic group over $B$. Suppose that $\euX, \euY$ are smooth over $B$ and projective over $\euS$. Then following \cite{CortiHanamura} we define the set of correspondences of degree $r$ between $\euX$ and $\euY$ as follows:  Let $ \euY = \coprod_{i} \euY_i$ with $\euY_i$ irreducible components then 
\begin{equation*}
    \Corr_r (\euX, \euY) : = \bigoplus_{i} \chow_{\dim \euY_{i} +r} ( \euX \times_\euS \euY_i)_\Lambda. 
\end{equation*}
We can construct a composition of correspondences
\begin{equation*}
    \circ: \Corr_r (\euX, \euY) \otimes \Corr_s (\euY, \euZ) \rightarrow \Corr_{r+s} (\euX, \euZ)
\end{equation*}
by considering the diagram 
\begin{center}
	\begin{tikzpicture}
		\node (TL) at (0 , 1.5) {$ \euX \times_{\euS} \euY \times_{\euS} \euZ$};
		\node (TR) at (5, 1.5) {$ \euX \times_{\euS} \euY \times  \euY \times_{\euS} \euZ$};
		\node (BL) at (0, 0){$\euY$};
        \node (BR) at (5,0){$ \euY \times \euY$};
		
		\draw[->] (TL) -- (TR) ;
		\draw[->] (TL) -- (BL);
		\draw[->] (TR) -- (BR);
        \draw[->] (BL) -- (BR) node [midway, below] {$\delta$};
	\end{tikzpicture}
\end{center}
which  allows us to define
\begin{equation*}
    \alpha \circ \beta := {p_{\euX \euZ}}_*( \delta^! (\alpha \times \beta)),
\end{equation*}
where $\delta: \euY \rightarrow \euY \times \euY$ and $p_{\euX, \euZ} : \euX \times_\euS \euY \times_\euS \euZ \rightarrow \euX \times_\euS \euZ$ the projection. We note that identity $ \id \in \Corr_0 (\euX, \euX)$ is given by the scheme theoretic image of $ \Delta : \euX \rightarrow \euX \times_{\euS} \euX$. 
\begin{Definition}
\label{Definition:chow_motives}
Let $\LM(\euS, \Lambda)$ denote the classical category Chow motives. The objects of this category are triples
\begin{center}
$(\euX, p, m)$
\end{center}
where $\euX$ is  smooth and projective over $\euS$, $p$ is an idempotent in $\Corr_0(\euX,\euX)$ and $m \in \mathbf{Z}$. The morphism sets are defined as
\begin{center}
$\hom ((\euX, p, m), (\euY, q, n)) : = q \circ  \Corr_{m-n} (\euX,\euY) \circ p \subseteq \Corr_{m-n} (\euX,\euY).$
\end{center}
\end{Definition}

\begin{Example}
\label{Example:Laterveer motives}
When $\euS = BG$, the category $\LM (\euS, \mathbf{Q})$ is equivalent to Laterveer's category of $G$-equivariant motives \cite{Laterveer}. In particular, to get the equivalence one must re-index because in loc. cit. Chow cohomology is used and in our situation since we are working over a not necessarily smooth stack $\euS$ we must use Chow homology.
\end{Example}

\begin{Lemma}
The category $\LM(\euS, \Lambda)$ is an additive, idempotent complete and symmetric monoidal where the tensor product is defined as 
\begin{center}
$(\euX, p, m) \otimes (\euY, q, n) : = (\euX \times_\euS \euY, p \times q, m+n).$
\end{center}
\end{Lemma}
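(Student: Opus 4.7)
The lemma is the stacky analog of the Corti--Hanamura construction \cite{CortiHanamura} of relative Chow motives, so the plan is to verify each property in turn by mimicking the scheme-theoretic proof and importing the six-functor operations on $\DM(-, \Lambda)$ from \cref{Section:DM for algebraic stacks} whenever a Gysin pullback, proper pushforward, or external product of cycles is needed. The crucial hypothesis is that every object $\euX$ appearing in $\LM(\euS, \Lambda)$ is smooth over $B$ and projective over $\euS$, so its diagonal $\delta_\euX : \euX \to \euX \times \euX$ is a regular closed immersion and therefore admits a refined Gysin pullback on Chow groups (this is absolute purity, \cref{Proposition:purity}).

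For additivity, each $\hom$-set is an abelian group as a subgroup of the relevant Chow group, and composition is bilinear because each of its three constituents (external product of cycles, Gysin pullback along the diagonal, and proper pushforward) is bilinear. The zero object is $(\emptyset, 0, 0)$, and direct sums are defined on objects sharing a twist by $(\euX_1, p_1, m) \oplus (\euX_2, p_2, m) := (\euX_1 \sqcup \euX_2, p_1 \oplus p_2, m)$; mismatched twists are handled by formally adjoining biproducts in the standard way. For idempotent completeness, any self-map $e$ of $(\euX, p, m)$ satisfying $e \circ e = e$ lies in $p \circ \Corr_0(\euX, \euX) \circ p$, hence satisfies $e \circ p = p \circ e = e$; the triple $(\euX, e, m)$ equipped with $e$ as both inclusion and projection then splits the idempotent.

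The symmetric monoidal structure on objects is given in the statement; on morphisms it is the external product along $- \times_\euS -$. The unit is $(\euS, [\Delta_\euS], 0)$, and associativity, symmetry, and the unit axioms follow from the canonical associators and symmetries for iterated fiber products over $\euS$. Bifunctoriality with respect to composition is the compatibility of external products with Gysin pullback and proper pushforward, a consequence of the projection formula and base change in \cref{Proposition:Existence_of_exceptional_functors}. The main obstacle is really hidden in the well-definedness of composition, namely associativity arising from commutativity of iterated Gysin pullbacks $\delta_\euY^! \delta_\euZ^!$ on the fourfold fiber product; on a stack this is not entirely routine, but since each $\delta^!$ can be reexpressed via $f^!$ for the smooth structure morphism using absolute purity, commutativity becomes a formal consequence of the base change equivalences recorded in \cref{Proposition:Existence_of_exceptional_functors}.
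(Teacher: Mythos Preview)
Your proposal is correct and follows essentially the same approach as the paper, which simply cites the Stacks Project tags 0FGF, 0FGB, and 0FGC and asserts that those classical arguments carry over verbatim. You have in effect unpacked what those tags contain and indicated where the six-functor input from \cref{Proposition:Existence_of_exceptional_functors} and \cref{Proposition:purity} is needed to make the refined Gysin maps work on the stacky level, which is more detail than the paper itself provides.
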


\begin{proof}
The proof follows by combining the arguments of \cite{stacks-project}[0FGF], \cite{stacks-project}[0FGB] and \cite{stacks-project}[0FGC].  
\end{proof}

We will now construct a functor
\begin{center}
$ F: \LM(\euS, \Lambda) \rightarrow \h \CM(\euS, \Lambda).$
\end{center}
Since $\LM(\euS, \Lambda)$ is the idempotent completion of the full sub-category $\LM(\euS, \Lambda)'$ spanned by pairs $(\euX, \id, m)$,  and $ \h \CM(\euS, \Lambda)$ is idempotent complete it is enough to construct a functor 
\begin{center}
$F' : \LM(\euS, \Lambda)' \rightarrow \h \CM(\euS, \Lambda).$
\end{center}
On objects this functor sends $ (\euX, m)$ to $f_{!}1_{\euX} (m)[2m] \in \h \CM(\euS, \Lambda)$. To describe what this functor does on morphisms we consider the isomorphism 
\begin{equation}
\label{Equation:naturaly_for_cycles}
\epsilon_{\euX,\euY}: \pi_0 \map_{\CM(\euS, \Lambda)} (f_! 1_\euX (m)[2m], g_{!} 1_\euY (n)[2n]) \simeq \chow_{d_{\euY} -n+m} (\euX \times_{\euS} \euY). 
\end{equation}
constructed in the proof of \cref{Theorem:Connectivity_of_mapping_spaces_in_DM}. We define for $\alpha \in \Corr_{m-n} (\euX, \euY)$ 
\begin{equation*}
    F' (\alpha) := \epsilon_{\euX, \euY}^{-1} (\alpha). 
\end{equation*}
In order to verify that $F'$ is a functor we need to check that $ F'(\alpha \circ \beta) = F'(\alpha) \circ F'(\beta)$. That is $F'$ is natural with respect to composition of cycles. The following is a version of \cite{Jin}[Prop. 2.39] for stacks. 

\begin{Proposition}
\label{Proposition:Naturality_for_composition_of_cycles}
Let $f: \euX \rightarrow \euS , f:\euY \rightarrow \euS, h:\euZ \rightarrow \euS$ be projective morphisms, where $\euX, \euY$ and $\euZ$ are smooth over $B$ and let $\alpha: f_!1_\euX \to g_! 1_\euY$ and $\beta: g_! 1_\euY \to h_! 1_\euZ$. Consider the cartesian diagram
\begin{center}
	\begin{tikzpicture}
		\node (TL) at (0 , 1.5) {$ \euX \times_{\euS} \euY \times_{\euS} \euZ$};
		\node (TR) at (5, 1.5) {$ \euX \times_{\euS} \euY \times  \euY \times_{\euS} \euZ$};
		\node (BL) at (0, 0){$\euY$};
        \node (BR) at (5,0){$ \euY \times \euY$};
		
		\draw[->] (TL) -- (TR) ;
		\draw[->] (TL) -- (BL);
		\draw[->] (TR) -- (BR)node [midway, right] {$f$};
        \draw[->] (BL) -- (BR) node [midway, below] {$\delta$};
	\end{tikzpicture}
\end{center}
Then we have the following equality
\begin{center}
$\epsilon_{\euX,\euZ}(\beta \circ \alpha) = \pi_* \delta^! (\epsilon_{\euX,\euY}(\alpha) \circ \epsilon_{\euY,\euZ} (\beta))$
\end{center}
 where $\pi:  \euX \times_{\euS} \euY \times_{\euS} \euZ \rightarrow  \euX \times_{\euS} \euZ $ is the projection.
\end{Proposition}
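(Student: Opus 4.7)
The plan is to reduce the claim to the analogous scheme-level statement, \cite{Jin}[Prop. 2.39], by unravelling the construction of $\epsilon_{\euX,\euY}$ given in the proof of \cref{Theorem:Connectivity_of_mapping_spaces_in_DM}. Recall from that proof that $\epsilon_{\euX,\euY}$ is obtained as a composite of three canonical isomorphisms: pullback along the smooth projection $p : \euV \times_{BG} \euS \to \euS$ coming from a sufficiently large Totaro gadget (fully faithful by homotopy invariance); restriction along the open immersion $\bar j : \euU \times_{BG} \euS \to \euV \times_{BG} \euS$ into the free locus; and Jin's scheme-level identification applied to the scheme $U \times^G (X \times_S Y)$ (after the base change and purity manipulations carried out in that proof).

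Next I would verify that each of these three ingredients is compatible with the composition law on correspondences, i.e.\ with the operation $\pi_* \delta^!(- \times -)$. For Jin's isomorphism this is precisely the content of \cite{Jin}[Prop. 2.39] applied to the smooth, projective schemes $U \times^G X,\ U \times^G Y,\ U \times^G Z$ over $U \times^G S$. For smooth pullback and open restriction this follows from the standard compatibilities of flat pullback with proper pushforward (flat base change in Chow theory), with the refined Gysin pullback along the regular immersion $\delta$, and with the external product of cycles. Assembling these compatibilities along the cartesian squares obtained by pulling back the defining diagram of $\beta \circ \alpha$ first to $\euU \times_{BG} \euS$ and then down to the Totaro scheme level produces the desired equality.

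The main obstacle will be bookkeeping: one must identify the pullback to $\euU \times_{BG} \euS$ of the defining diagram of $\beta \circ \alpha$ with the corresponding scheme-level diagrams for the triples $U \times^G (X \times_S Y)$, $U \times^G (Y \times_S Z)$, and $U \times^G (X \times_S Y \times_S Z)$, and check that $\epsilon$ intertwines the stack-level composition with the scheme-level composition after pullback. This in turn uses the very definition of the equivariant higher Chow groups $\chow_s(\euX, t)$ given at the start of \cref{Section:Mapping spectra and chow motives}, which is engineered exactly so that $\epsilon_{\euX,\euY}$ becomes flat pullback of cycles along $\Pi : \euU \times_{BG}(\euX \times_\euS \euY) \to \euX \times_\euS \euY$. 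Once these identifications are in place, the argument is essentially mechanical: one applies Jin on the scheme level, then transports back along $\bar j^* p^*$ using the naturality of flat pullback to reassemble the formula over $\euS$.
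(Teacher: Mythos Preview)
Your proposal is correct and follows essentially the same route as the paper: fix a single Totaro gadget, transport $\alpha$, $\beta$ and $\beta \circ \alpha$ to the scheme $\euU \times_{BG} \euS$ via the fully faithful functor $q^* = \bar j^* p^*$, and then invoke \cite{Jin}[Prop.~2.39] for the projective morphisms $f_\euU, g_\euU, h_\euU$ over that scheme. The paper's proof is terser and simply records that $q^*$ is a functor (hence preserves composition) and that Jin's result then finishes the argument; your additional remarks about compatibility of flat pullback with $\pi_*$, $\delta^!$ and external products on the Chow side are precisely the bookkeeping the paper leaves implicit, and they go through for the standard reasons you indicate.
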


\begin{proof}
We fix a Totaro gadget $ U \subset V$ and let $ q: U/G \rightarrow B$. Then as in the proof of \cref{Theorem:Connectivity_of_mapping_spaces_in_DM}  the functor $ q^* = \bar{j}^* p^*$ induces natural isomorphisms
\begin{eqnarray*}
    \pi_0 \map_{\DM(\euS, \Lambda)} (f_! 1, g_! 1) \overset{q^*}{\rightarrow} \pi_0 \map_{\DM(\euU \times_{BG} \euS, \Lambda)} ({f_{\euU}}_!1, {g_{\euU}}_! 1 ) \\
    \pi_0 \map_{\DM(\euS, \Lambda)} (g_! 1, h_! 1) \overset{q^*}{\rightarrow} \pi_0 \map_{\DM(\euU \times_{BG} \euS, \Lambda)} ({g_{\euU}}_!1, {h_{\euU}}_! 1 ).
\end{eqnarray*}
The result now follows from \cite{Jin}[Prop. 2.39], applied to 
\begin{eqnarray*}
    f_{\euU}: \euU \times_{BG} \euX \rightarrow \euU \times_{BG} \euS,\\
    g_{\euU}: \euU \times_{BG} \euY \rightarrow \euU \times_{BG} \euS, \\
    h_{\euU}: \euU \times_{BG} \euZ \rightarrow \euU \times_{BG} \euS.
\end{eqnarray*} 
\end{proof}

\begin{Corollary}
    \label{Corollary:F'_is_a_functor}
    The map $F' : \LM(\euS, \Lambda)' \rightarrow \h \CM(\euS, \Lambda)$ is a functor.
\end{Corollary}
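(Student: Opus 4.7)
The plan is to verify the two axioms of a functor: compatibility with composition and with identities. For composition, one unpacks the definitions: given cycles $\alpha \in \Corr_{m-n}(\euX,\euY)$ and $\beta \in \Corr_{n-r}(\euY,\euZ)$, one must check the equality
\[
F'(\alpha \circ_{\mathrm{cyc}} \beta) \;=\; F'(\beta) \circ F'(\alpha)
\]
in $\h \CM(\euS,\Lambda)$, where the right-hand side is composition of morphisms between the objects $f_!1_\euX(m)[2m]$, $g_!1_\euY(n)[2n]$, $h_!1_\euZ(r)[2r]$. Applying the bijection $\epsilon_{\euX,\euZ}$ to both sides and setting $\alpha = \epsilon_{\euX,\euY}(F'(\alpha))$, $\beta = \epsilon_{\euY,\euZ}(F'(\beta))$, this reduces precisely to the equality proved in \cref{Proposition:Naturality_for_composition_of_cycles}, since by the definition of cycle-theoretic composition given just before \cref{Definition:chow_motives} we have $\alpha \circ_{\mathrm{cyc}} \beta = \pi_* \delta^!(\alpha \times \beta)$, which matches the right-hand side of that proposition.

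For the identity axiom, I must show that for every object $(\euX,m)$ of $\LM(\euS,\Lambda)'$, the cycle class $[\Delta_\euX] \in \chow_{\dim \euX}(\euX \times_\euS \euX)_\Lambda$ corresponding to the identity in $\Corr_0(\euX,\euX)$ satisfies $\epsilon_{\euX,\euX}^{-1}([\Delta_\euX]) = \id_{f_! 1_\euX(m)[2m]}$. Equivalently, I need $\epsilon_{\euX,\euX}(\id) = [\Delta_\euX]$. Tracing through the construction of $\epsilon$ in the proof of \cref{Theorem:Connectivity_of_mapping_spaces_in_DM}, the identity morphism is sent through smooth base change along a Totaro gadget, absolute purity for a regular closed immersion, and finally the identification with Chow groups via \cite{Jin}[Lem.~2.37] and \cref{Proposition:Borel_Moore_homology_comparison_arbitrary_coefficients}. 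At each stage the identity transforms into the fundamental class of the corresponding diagonal (since these are the unit maps of adjunctions applied to structure sheaves), and at the last step one obtains $[\Delta_{\euX \times_\euS \euX}]$, which is exactly $[\Delta_\euX]$ in our indexing.

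The composition axiom is, in effect, already done; the only genuine work is the identity check. The main obstacle there is making the tracing of $\id$ through $\epsilon$ fully rigorous, because $\epsilon$ was constructed implicitly as a composition of many canonical isomorphisms. I would formalize this by noting that each step of the construction is compatible with the natural transformation $\id \to \id$ and, since the Totaro-gadget pullbacks and purity isomorphisms are symmetric monoidal, the unit class is preserved. Alternatively, one can argue directly on the Chow side by using the fact that the unit of the $(f_!, f^!)$-adjunction, combined with the projection formula, identifies the identity of $f_!1_\euX$ with the class of the diagonal, exactly as in the scheme-level argument of \cite{Jin}[after Lem.~2.37]; the stack-theoretic case is then reduced to this via the Čech-nerve computation used in the proof of \cref{Theorem:Connectivity_of_mapping_spaces_in_DM}. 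Combining the two verifications yields the corollary.
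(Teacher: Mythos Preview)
Your approach follows the paper's: reduce composition to \cref{Proposition:Naturality_for_composition_of_cycles} and then handle the identity separately. The paper's own proof simply cites \cref{Proposition:Naturality_for_composition_of_cycles} together with \cite{Jin}[Props.~3.11, 3.15, 3.16], so the skeleton is the same.

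There is, however, a genuine gap in your composition argument that the Jin citations are covering. The right-hand side of \cref{Proposition:Naturality_for_composition_of_cycles} reads $\pi_* \delta^! (\epsilon_{\euX,\euY}(\alpha) \circ \epsilon_{\euY,\euZ}(\beta))$, but the operations $\pi_*$, $\delta^!$, and the product appearing there are the \emph{motivic} Borel--Moore operations induced by the six-functor formalism, not a priori the classical Chow-theoretic proper pushforward, refined Gysin map, and exterior product used to define the composition of correspondences before \cref{Definition:chow_motives}. Your sentence ``which matches the right-hand side of that proposition'' silently identifies the two. Establishing that identification is exactly the content of \cite{Jin}[Props.~3.11, 3.15, 3.16], which show that under the comparison isomorphism between motivic Borel--Moore homology and Chow groups, the motivic pushforward, refined Gysin, and product agree with the cycle-theoretic ones. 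Without invoking those compatibilities (or reproving them), the composition check is incomplete.

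Your treatment of the identity is correct in outline, and the alternative you sketch---reducing to the scheme case via the Totaro/\v{C}ech argument and then citing Jin's scheme-level computation that $\epsilon(\id)$ is the diagonal class---is exactly how one would make it rigorous; this is again subsumed by the Jin references the paper cites.
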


\begin{proof}
    This follows directly from \cref{Proposition:Naturality_for_composition_of_cycles} together with \cite{Jin}[Props. 3.11, 3.15, 3.16]. 
\end{proof}

Now by the universal property of idempotent completion we get a well defined functor
\begin{equation} 
\label{Equation:weight_heart_identification}
 F: \LM(\euS, \Lambda) \rightarrow \h \CM(\euS, \Lambda)
\end{equation}

\begin{Theorem}
\label{Theorem:Identification of chow motives}
The functor $F$,  \ref{Equation:weight_heart_identification}, is an equivalence of categories
\begin{center}
    $F: \LM(\euS, \Lambda) \overset{\simeq}{\rightarrow} \h \CM(\euS, \Lambda)$
\end{center}
\end{Theorem}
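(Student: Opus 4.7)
The plan is to verify separately that $F$ is essentially surjective and fully faithful.

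For essential surjectivity, recall from \cref{Definition:Infty category of chow motives} that $\CM(\euS,\Lambda)$ is by definition the smallest additive $\infty$-subcategory of $\DM(\euS,\Lambda)$ containing the objects $f_! 1_\euZ(q)[2q]$ for $f:\euZ\rightarrow\euS$ projective with $\euZ$ smooth over $B$, and closed under retracts. Each such generator equals $F(\euZ,\id,q)$ by construction of $F'$, so every generator lies in the essential image. Since $\LM(\euS,\Lambda)$ is additive and idempotent complete, and $F$ was obtained via the universal property of idempotent completion, $F$ preserves direct sums and retracts. It follows that the essential image contains all of $\h\CM(\euS,\Lambda)$.

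For fully faithfulness, I would first verify the claim on the non-idempotent-completed subcategory $\LM(\euS,\Lambda)'$ spanned by pairs $(\euX,\id,m)$. On such objects the map
\begin{equation*}
F : \Corr_{m-n}(\euX,\euY) \;\longrightarrow\; \pi_0\map_{\CM(\euS,\Lambda)}(f_! 1_\euX(m)[2m],\, g_! 1_\euY(n)[2n])
\end{equation*}
is defined as $\epsilon_{\euX,\euY}^{-1}$, which is an isomorphism of abelian groups by the identification established in the proof of \cref{Theorem:Connectivity_of_mapping_spaces_in_DM}. Compatibility with composition is exactly \cref{Proposition:Naturality_for_composition_of_cycles}, together with \cref{Corollary:F'_is_a_functor}. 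Hence $F'$ is fully faithful.

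To pass to $\LM(\euS,\Lambda)$, observe that for arbitrary objects $(\euX,p,m)$ and $(\euY,q,n)$ the Hom sets on both sides are cut out by the same idempotents under the bijection above. Explicitly,
\begin{equation*}
\hom_{\LM(\euS,\Lambda)}((\euX,p,m),(\euY,q,n)) = q\circ\Corr_{m-n}(\euX,\euY)\circ p,
\end{equation*}
while by functoriality of $F'$ and naturality of $\epsilon$,
\begin{equation*}
\hom_{\h\CM(\euS,\Lambda)}(F(\euX,p,m),F(\euY,q,n)) = F(q)\circ\pi_0\map(f_!1_\euX(m)[2m],g_!1_\euY(n)[2n])\circ F(p).
\end{equation*}
Applying the fully faithful bijection $F'$ sends the first group isomorphically onto the second. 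The main obstacle, if any, is bookkeeping around the idempotent completion: one must check that $\h\CM(\euS,\Lambda)$ really is idempotent complete (which follows from the retract-closure in \cref{Definition:Infty category of chow motives}) and that the functor $F'$ on $\LM(\euS,\Lambda)'$ factors through idempotent completion on the target, so that $F$ exists as stated; the rest is a direct consequence of the connectivity computation \cref{Theorem:Connectivity_of_mapping_spaces_in_DM} and the compatibility of $\epsilon$ with composition.
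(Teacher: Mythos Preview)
Your proof is correct and follows the same approach as the paper: essential surjectivity because the generators $f_!1_\euZ(q)[2q]$ lie in the essential image and $F$ is additive and compatible with retracts, and full faithfulness because $F'$ is defined via the isomorphism $\epsilon_{\euX,\euY}^{-1}$ and then extended to the idempotent completion. The paper simply records these two points in a single sentence each (declaring full faithfulness ``clear''), whereas you have spelled out the idempotent-completion bookkeeping explicitly.
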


\begin{proof}
Fully-faithfullness is clear. To see that it is essentially surjective simply note that $F$ is an additive functor and every generator of $\h \CM(BG)$ is contained in its essential image.
\end{proof}

\begin{Corollary}
\label{Corollary:Identification of weight heart}
Let $\euX$ be a Nis-loc stack over a field of characteristic $0$. Then the homotopy category of the heart of the  weight structure constructed in \cref{Theorem:Weight_structure_for_global_quotients} for $\DM_{\gm}(\euX, \Lambda)$ can be identified with $\LM(\euX, \Lambda)$. That is
\begin{equation*}
    \LM(\euX, \Lambda) \simeq \h \DM_{\gm} (\euX, \Lambda)^{\heartsuit_w} .
\end{equation*}
\end{Corollary}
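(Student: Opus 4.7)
The plan is to combine the two main theorems of the preceding sections. First, \cref{Theorem:Weight_structure_for_global_quotients}(1) identifies the weight heart as an $\infty$-category:
\[
\DM_{\gm}(\euX, \Lambda)^{\heartsuit_w} \simeq \CM(\euX, \Lambda),
\]
(here I would tacitly use that the corollary is really stated for $\euX = [X/G]$ quasi-projective over $BG$, matching the hypothesis of \cref{Theorem:Weight_structure_for_global_quotients}, since that is the only case in which the weight structure has been constructed). Passing to the homotopy category is a functor, so this immediately yields $\h \DM_{\gm}(\euX, \Lambda)^{\heartsuit_w} \simeq \h \CM(\euX, \Lambda)$.

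Next, I would invoke \cref{Theorem:Identification of chow motives} applied with $\euS = \euX$: the functor $F : \LM(\euX, \Lambda) \to \h\CM(\euX, \Lambda)$ constructed in \cref{Equation:weight_heart_identification} is an equivalence of additive categories. Composing the two equivalences gives
\[
\LM(\euX, \Lambda) \xrightarrow{F} \h \CM(\euX, \Lambda) \simeq \h \DM_{\gm}(\euX, \Lambda)^{\heartsuit_w},
\]
which is the desired identification.

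In short, this is a purely formal corollary: all the real content is in the identification of the weight heart with $\CM(\euX, \Lambda)$ (already proven via Bondarko's abstract criterion in \cref{Theorem:Existence_of_abstract_weight_structure} together with \cref{Theorem:Generation_of_DM_gm_by_chow} and the connectivity result \cref{Theorem:Connectivity_of_mapping_spaces_in_DM}), and in the combinatorial comparison between $\CM(\euX, \Lambda)$ and classical Chow motives via cycle-class correspondences done in \cref{Theorem:Identification of chow motives}. There is no real obstacle at this stage; the only thing worth flagging is making explicit that the hypothesis ``Nis-loc stack over a field of characteristic $0$'' should be read in the spirit of \cref{Theorem:Weight_structure_for_global_quotients}, i.e.\ as covering the global quotient case $\euX = [X/G]$ where all the ingredients have been established.
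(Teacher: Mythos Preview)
Your proposal is correct and matches the paper's intended argument: the corollary is stated without proof in the paper, being an immediate consequence of combining \cref{Theorem:Weight_structure_for_global_quotients}(1) with \cref{Theorem:Identification of chow motives}, exactly as you outline. Your remark about reading the hypothesis in the spirit of \cref{Theorem:Weight_structure_for_global_quotients} (i.e.\ for $\euX = [X/G]$ with $X$ quasi-projective) is also apt, since that is the only setting in which the weight structure has actually been constructed.
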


\begin{Corollary}
    \label{Corollary:Comparison with Laterveer's category}
For an algebraic group $G$, the category $ h\CM(BG, \mathbf{Q})$ is equivalent to the category of $G$-equivariant chow motives of Laterveer. 
\end{Corollary}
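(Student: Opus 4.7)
The plan is to chain together two results already established in the paper: Theorem \ref{Theorem:Identification of chow motives}, applied to the base stack $\euS = BG$ with coefficients $\Lambda = \mathbf{Q}$, and the dictionary recorded in \cref{Example:Laterveer motives}. Concretely, \cref{Theorem:Identification of chow motives} specialised to this setting produces an equivalence of categories
\begin{equation*}
 F : \LM(BG, \mathbf{Q}) \overset{\simeq}{\longrightarrow} \h\CM(BG, \mathbf{Q}),
\end{equation*}
so the only remaining task is to match $\LM(BG, \mathbf{Q})$ with the category originally defined by Laterveer.

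First I would unwind the definition of $\LM(BG, \mathbf{Q})$ from \cref{Definition:chow_motives} in the case $\euS = BG$. The objects are triples $(\euX, p, m)$ with $\euX = [X/G]$ smooth and projective over $BG$ (equivalently, $X$ a smooth projective $G$-variety), $p$ an idempotent correspondence of degree $0$, and $m \in \mathbf{Z}$; morphisms are cut out from $\Corr_{m-n}(\euX,\euY) = \bigoplus_i \chow_{\dim \euY_i + (m-n)}(\euX \times_{BG} \euY_i)_\mathbf{Q}$. Next I would recall Laterveer's construction \cite{Laterveer}: it is built from the same class of smooth projective $G$-varieties, with morphisms the $G$-equivariant Chow cohomology groups of the product $X \times Y$, and composition defined by the usual pull–push formula through the triple product.

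The identification is then essentially formal after the re-indexing pointed out in \cref{Example:Laterveer motives}. Using Edidin--Graham equivariant Chow groups and the fact that for smooth $G$-varieties equivariant Chow cohomology agrees (up to the standard degree shift by the relative dimension) with equivariant Chow homology of the product, one has
\begin{equation*}
 \Corr_{r}(\euX,\euY) \;\simeq\; \bigoplus_i \chow^{\dim \euX - r}_G(X \times Y_i)_\mathbf{Q},
\end{equation*}
which is exactly Laterveer's group of equivariant correspondences of degree $r$. A direct check, using the projection formula and the pullback/pushforward functoriality of equivariant Chow groups, shows that composition in $\LM(BG,\mathbf{Q})$ matches Laterveer's convolution product (this is the same verification as in the non-equivariant case \cite{CortiHanamura}, transported to the $G$-equivariant setting via a Totaro gadget as in the proofs of \cref{Proposition:Borel_Moore_homology_comparison_arbitrary_coefficients} and \cref{Theorem:Connectivity_of_mapping_spaces_in_DM}). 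Composing this equivalence with $F$ yields the desired identification.

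The only subtle point -- really the one place care is needed -- is the re-indexing between Chow homology and Chow cohomology, since Laterveer works over the (non-smooth) classifying stack $BG$ using Chow cohomology, whereas our $\LM(BG,\mathbf{Q})$ is phrased in terms of Chow homology of fibre products over $BG$. This is a bookkeeping issue rather than a substantive obstacle, and can be handled uniformly by invoking the equivariant purity/Poincar\'e duality isomorphism for smooth projective $G$-varieties to convert the indexing conventions. Once this is spelled out, the corollary follows at once.
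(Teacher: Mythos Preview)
Your proposal is correct and follows exactly the paper's approach: combine \cref{Theorem:Identification of chow motives} (specialised to $\euS = BG$, $\Lambda = \mathbf{Q}$) with the identification recorded in \cref{Example:Laterveer motives}. The paper's proof is a one-line invocation of these two results, whereas you spell out the re-indexing between Chow homology and cohomology in more detail; this elaboration is helpful but not a different argument.
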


\begin{proof}
This is just \cref{Example:Laterveer motives} combined with \cref{Theorem:Identification of chow motives}. 
\end{proof}

\bibliographystyle{amsalpha}
\newcommand{\etalchar}[1]{$^{#1}$}
\providecommand{\bysame}{\leavevmode\hbox to3em{\hrulefill}\thinspace}
\providecommand{\MR}{\relax\ifhmode\unskip\space\fi MR }
\providecommand{\MRhref}[2]{%
  \href{http://www.ams.org/mathscinet-getitem?mr=#1}{#2}
}
\providecommand{\href}[2]{#2}

\end{document}